\documentclass{amsart}
\title[Wind generated water waves]{On the wind generation of water waves} 

\author[O. B\"uhler]{Oliver B\"uhler${}^\dagger$}
\address[O. B\"uhler and J. Shatah]{Courant Institute of Mathematical Sciences \\ New York University \\ 251 Mercer Street, New York, NY 10012}
\email[O. B\"uhler]{obuhler@@cims.nyu.edu}
\thanks{${}^\dagger$Work supported in part by NSF-DMS 1312159}

\author[J. Shatah]{Jalal Shatah${}^\ddagger$}
\email[J. Shatah]{shatah@@cims.nyu.edu}
\thanks{${}^\ddagger$Work supported in part by NSF-DMS 1363013}

\author[S. Walsh]{Samuel Walsh}
\address[S. Walsh]{Department of Mathematics \\ University of Missouri \\  Math. Sciences Building, Columbia, MO 65211} 
\email{walshsa@@missouri.edu}

\author[C. Zeng]{Chongchun Zeng${}^\mathsection$}
\address[C. Zeng]{School of Mathematics\\
Georgia Institute of Technology\\ 686 Cherry Street, Atlanta, GA 30332}
\email{zengch@@math.gatech.edu}
\thanks{${}^{\mathsection}$Work supported in part by NSF-DMS 1362507.}

\addtolength{\topmargin}{-10ex}
\usepackage{amssymb,stmaryrd}
\usepackage{amscd}
 \usepackage{mathrsfs,dsfont}
\usepackage[dvips]{graphicx}

\usepackage{geometry}                

\DeclareFontFamily{OT1}{pzc}{}
\DeclareFontShape{OT1}{pzc}{m}{it}{<-> s * [1.10] pzcmi7t}{}
\DeclareMathAlphabet{\mathpzc}{OT1}{pzc}{m}{it}     

\newcommand{\be}{\begin{equation} }
\newcommand{\ee}{\end{equation}}
\newcommand{\bse}{\begin{subequations}}
\newcommand{\ese}{\end{subequations}}

\newcommand{\jump}[1]{\left\llbracket{#1}\right\rrbracket}

\newcommand{\realpart}[1]{\operatorname{Re}{#1}}
\newcommand{\imagpart}[1]{\operatorname{Im}{#1}}

\newcommand{\sgn}[1]{\operatorname{sgn}{#1}}

\newcommand{\matD}{\mathbf{D}_t}

\newcommand{\p}{\partial}
\newcommand{\CN}{\mathcal{N}}
\newcommand{\ep}{\epsilon}
\newcommand{\CW}{\mathcal{W}}
\newcommand{\R}{\mathbb{R}}
\newcommand{\CA}{\mathcal{A}}
\newcommand{\BW}{\mathbf{W}}

\theoremstyle{plain} 
\newtheorem{theorem}{Theorem}[section]
\newtheorem{corollary}{Corollary}[section] 

\newtheorem{lemma}{Lemma}[section]
\newtheorem{proposition}{Proposition}[section] 
\theoremstyle{definition}
 
\theoremstyle{definition} 
 
\theoremstyle{remark} 
\newtheorem{remark}{Remark}[section]

\newtheorem{example}{Example}[section]

\numberwithin{equation}{section}

\usepackage{hyperref}
\hypersetup{ 
colorlinks=true, 
} 

\begin{document}

\begin{abstract}
In this work, we consider the mathematical theory of wind generated water waves.   This entails determining the stability properties of the family of laminar flow solutions to the two-phase interface Euler equation.  We present a rigorous derivation of the linearized evolution equations about an arbitrary steady solution, and, using this, we give a complete proof of the instability criterion of Miles \cite{miles1957windwaves1}.  Our analysis is valid even in the presence of surface tension and a vortex sheet (discontinuity in the tangential velocity across the air--sea interface).  We are thus able to give a unified equation connecting the Kelvin--Helmholtz and quasi-laminar models of wave generation.
\end{abstract}

\maketitle

\section{Introduction}
In this paper, we seek to address the extremely classical problem of determining how wind blowing over the ocean generates waves.  Specifically, our main objective is to give a mathematically rigorous answer to the question: What must the velocity profile of the wind be in order to give rise to persistent waves in quiescent water?  

For our purposes, the air--sea system is modeled by the
two-dimensional incompressible interface Euler problem.   That
is,  we consider the ocean and atmosphere as immiscible fluids,
each evolving according to the free surface Euler equations.  The
assumption of incompressibility is widely adopted and reasonable
because the Mach number for typical flows is quite small (see,
for example, \cite{janssen2004interaction}).  At time $t \geq 0$,
the air inhabits the region $\Omega_t^+$, and the water occupies
$\Omega_t^-$.  The ocean is finite depth with a rigid flat bed.
We also follow the common practice of taking the atmosphere region to
lie below a rigid flat lid; this is justified on the grounds
that, if the flow is evanescent 
at high altitudes, its behavior there does not strongly affect the dynamics near the ocean (see, e.g., 
\cite{janssen2004interaction}).   Letting $S_t := \partial \Omega_t^+ \cap \partial \Omega_t^-$ denote the air--sea interface, we assume moreover that   
\[ \Omega_t^+ \cup \Omega_t^- \cup S_t =: \Omega_t \cup S_t = \mathbb{T} \times (-h_-, h_+).\] 
Here $\mathbb{T} := \mathbb{R} /2\pi \mathbb{Z}$ is the one-dimensional torus.  The constants $h_\pm > 0$ are fixed and describe the location of the ocean bed and atmospheric lid.   For simplicity, we will suppose that $S_t$ is given as the graph of a smooth function $\eta = \eta(t,x_1)$.  This is not strictly necessary, but simplifies many of the computations. 

The velocity field $v = v(t,x) \in \mathbb{R}^2$ and pressure $p = p(t,x) \in \mathbb{R}$ satisfy
 \begin{subequations} \label{intro:eulerianeuler}
 \begin{align} 
  \matD v  + \frac{1}{\rho} \nabla p + g  \mathbf{e}_2 &= 0 \qquad \textrm{in } \Omega_t\label{intro:eulerianmomentum} \\
  \nabla \cdot v &= 0 \qquad \textrm{in } \Omega_t,  \label{intro:euleriandivfree} 
 \end{align}
 where $\matD := \partial_t + \nabla_v$ is the material derivative, $\rho$ is the density, and $g$ is the gravitational constant.  Implicit above is the assumption that $\rho$ is constant in each fluid region.   We point out that the momentum equation \eqref{intro:eulerianmomentum} does not include any turbulent effects --- this is a modeling choice that we discuss below.  
 
 The motion of the interface is dictated by the kinematic condition which expresses the fact that $S_t$ is a material line
\be \partial_t + v_\pm \cdot \nabla \textrm{ is tangent to } \{(t, x) \mid x \in S_t\}. \label{intro:euleriankinematic} \ee
Likewise, the rigidity of the ocean floor and atmospheric lid means that
\be v_\pm \cdot \mathbf{e}_2 = 0 \qquad \textrm{on } x_2 = \pm h_\pm .\ee
Note that we are using the convention that, for a quantity $f$ with domain $\Omega_t$, $f_\pm := f|_{\Omega_t^\pm}$.  
Lastly, we impose the dynamic boundary condition
\be p_+(t,x)-p_-(t,x) = \sigma \kappa(t,x)= \frac {\sigma \eta_{x_1x_1}}{(1+\eta_{x_1}^2)^{\frac 32}} \qquad \textrm{for all } x \in S_t.\label{intro:euleriandynamic} \ee
 \end{subequations}
Here $\sigma \geq 0$ is a (fixed) material constant, and $\kappa(t,x)$ is the signed curvature of $S_t$ at $x$. 

The local well-posedness theory for \eqref{intro:eulerianeuler} has been studied intensively.     Naturally, the irrotational problem has enjoyed the most attention, but as will become clear, vorticity in the atmosphere plays a central role in wind-wave generation.  Thus we will limit our discussion to the literature concerning the rotational case.   With or without vorticity, when surface tension is neglected (i.e., taking $\sigma = 0$), the linearized system is {ill-posed} (cf., \cite{beale1993growth}).  When $\sigma > 0$, the full nonlinear problem is locally well-posed (cf., \cite{cheng2008vortexsheets,shatah2011interface}) 

The interface Euler equations possess a large class of nontrivial exact solutions:  observe that any pair $(v, S_t)$ with the ansatz 
\[ v(t,x) = (U(x_2), \, 0), \qquad S_t = \mathbb{T} \times
\{0\}\] comprises a time-independent solution of
\eqref{intro:eulerianeuler}.  These are called \emph{laminar} or
\emph{shear flows}, and serve as a model for the state of the
air--sea system before water waves have formed.  If the
background flow is stable, then perturbations will remain in a
neighborhood of the equilibrium, meaning that the interface does
not leave its quiescent state.  On the other hand, if it is
unstable, then the free surface will become 
deformed at 
finite amplitude, i.e., persistent surface waves will be born.
In this way, the study of wind generation of water waves is
equated with diagnosing the stability/instability of the laminar
flows subject to Eulerian dynamics.

A natural starting point is to consider the situation where the ocean is at rest and the velocity of the air is uniform, i.e., $U_- \equiv 0$, $U_+ \equiv U_0$, for some constant $U_0$.  This is the classical Kelvin--Helmholtz model, and it is indeed (linearly) unstable whenever $\sigma = 0$ and $U_0 \neq 0$ (in a sense that we will make precise later).   With surface tension, the flow is (linearly) stable for $U_0$  satisfying the inequality
\[ U_0^2 \leq 2\frac{\rho_+ + \rho_-}{\rho_+ \rho_-}  [g \sigma (\rho_- - \rho_+)]^{1/2}.  \]
(See, e.g., \cite{drazin2004book}).  Setting $\rho_\pm$ and
$\sigma$ to their physical values, this predicts that the onset
of instability occurs when $U_0 > 6.6$ m/s, which is an order of
magnitude larger than observation suggest.  Worse still, this
instability is first felt at very small wavelength, roughly
$0.017$m; to excite a wave with a more typical wavelength 
of less than 100m, say, requires $U_0$ to be another order of
magnitude larger.

One is forced to admit, therefore, that the Kelvin--Helmholtz
instability fails miserably as a model of the wind generation of
ocean waves.  In particular, it is missing some destabilizing
mechanism inherent in the physical system.  Kelvin himself
observed this fact in his original article on the topic in 1871
(cf. \cite{kelvin1871hydro}).  Since then, the task of finding a
suitable replacement has been a fundamental problem in
geophysical fluid dynamics. The next century saw a succession of
competing models (cf., e.g., \cite{janssen2004interaction} for a
summary), but now the majority opinion has largely settled on the
\emph{quasi-laminar model}.  Put forward by Miles in a series of
papers
(cf. \cite{miles1957windwaves1,miles1959windwaves2,miles1959windwaves3}),
the quasi-laminar model views the wind generation process as a
resonance phenomenon.  Briefly, the idea is the following.  Since
$\rho_+/\rho_- \ll 1$, the atmosphere may be viewed as a
perturbation of vacuum.  For infinite depth gravity water water
beneath vacuum, one has the dispersion relation $c = \sqrt{g/k}$,
where $c$ is the wave speed and $k$ is the wavenumber.  That is,
the formally linearized problem has an exact solution for which
$S_t$ is given as the graph of a function proportional to
$e^{ik(x_1-ct)}$. This suggests that the dispersion relation with
an atmosphere may have the asymptotic form \be c =
\sqrt{\frac{g}{k}} + c_1 \epsilon + O(\epsilon^2), \qquad
\epsilon := \frac{\rho_+}{\rho_-}.\label{intro:milesdispersion}
\ee In view of \eqref{intro:milesdispersion}, the situation where
a \textsl{critical layer} exists is of special interest: in this
case the unperturbed phase speed $\sqrt{g/k}$ lies in the range
of flow speeds $U_+$ and this may enable a certain resonance
between the shear flow in the atmosphere and the gravity waves in
the ocean, which then manifests itself as a linear instability.
Formally linearizing \eqref{intro:eulerianeuler} and searching
for a growing mode solution of the form $v = \psi(x_2)
e^{ik(x_1-ct)}$, Miles concluded that (linear) instability occurs
if 
a critical layer location $x_2^*$ exists where $\sqrt{g/k} =
U(x_2^*)$ and $U''(x_2^*)<0.$  This computation was done with
$\sigma = 0$, meaning that the Cauchy problem for the full system is ill-posed.  Nonetheless, if one fixes a physically realistic value
for the wavenumber $k$, one can still study the linear evolution.  Doing this, Miles's scheme gives a way to estimate
the corresponding growth rate.  Most importantly, in contrast to
Kelvin--Helmholtz, the quasi-laminar model allows for wave
generation without unreasonably large wind speed.

Though it is now widely accepted, the quasi-laminar model has been criticized in the applied literature for consigning turbulence to a relatively minor role (cf., e.g.,  the discussion in \cite[Chapter 4]{janssen2004interaction}).   Indeed, turbulence is absent in the basic equations \eqref{intro:eulerianeuler}, and so its influence is felt only implicitly.  For instance, Miles uses a logarithmic wind profile in his growth rate computations, which is predicted by the theory of turbulent laminar flow over a flat plate.  Also, the presence of a turbulent boundary layers is used to justify his assumption that $U_+(0) = U_-(0) = 0$.  Since the actual air flow near the ocean surface is known to be highly turbulent, many authors have argued that the effects of turbulence must be included explicitly in the dynamics of the air flow.      Nonetheless, both in field observations and laboratory studies \cite{plant1982relationship,hristov2003dynamical}, the quasi-laminar model has proven
  to be a rather good predictor of wind energy transfer in many settings.  Of course, when considering these studies, one must take into account the extreme difficulty in obtaining accurate readings; even the task of deducing the wind profile is highly nontrivial.  In short, there is a great deal of uncertainty, but the ideas of Miles have been more-or-less borne out by the subsequent half-century of experimentation.    

 \subsection*{Summary of results}
 
 Having established the background, let us now enumerate the mathematical contents of the present work.  It consists of two parts.  The first is a careful derivation of the linearized interface Euler system \eqref{intro:eulerianeuler} about an arbitrary steady solution, including those with a vortex sheet and taking into account surface tension.  The second is a rigorous analysis of the behavior of solutions of the Rayleigh stability equation when the background flow has a critical layer occurring at an non-inflection point.  We elaborate on both of these points below.   
 
 \subsubsection*{Rigorous linearization}
All prior derivations of the linearized equations for the two phase Euler flow are done heuristically and in a setting that does not allow surface tension and/or a vortex sheet if the background velocity is non-uniform.  In \cite{janssen2004interaction}, for instance, Janssen suggests that the two-fluid problem  be imagined as a limit of single-fluid problems with smooth heterogeneous density.    The advantage of this approach is that it obviates the need to fix the domain, since the one phase fluid simply occupies the entire plane. However, the limiting process as the density becomes discontinuous over the interface is not straightforward.  It is hard, for instance, to see even at a formal level how the equations governing the dynamics of the free boundary arise when one allows for a vortex sheet and surface tension.  Moreover, rigorously establishing that the limit of the one phase problems converge to a solution of the two phase problem is a very difficult task (cf., e.g., \cite{james2001internal} where it is carried out in a very specific physical regime).

Another typical approach is to linearize the Euler equation \eqref{intro:eulerianmomentum} separately in each fluid region, then formally linearize the equation of boundary motion \eqref{intro:euleriankinematic}.  This is precisely what Miles does in \cite{miles1957windwaves1}, where again we note that he assumes the  continuity of the tangential velocity across the interface and works without surface tension.  A flow is then considered linearly unstable if there exists a growing mode solution of the resulting problem.  

There is one feature that this process lacks to be mathematically rigorous.    Notice that we are considering a free boundary problem, so the meaning of ``linearized operator'' is somewhat subtle.  From a dynamical systems point of view, the only suitable definition of the linearized propagator is found by taking the Fr\'echet derivative or G\^ateaux variation.  For this to make sense, the linearized problem must first be formulated in a specific function space.  In the present setting, this issue manifests itself when one takes variations of the velocity field.  Performing the  standard formal linearization of the Euler equation, we can see immediately that the linearized velocity will have the wrong boundary conditions on the interface and hence is not in the correct function space.  This is expanded upon in Section \ref{S:LE};  specifically \eqref{general LV1} and Remark \ref{linearization remark}.  

Our linearization based on the Hodge decomposition leads us to a system consisting of an ODE, Rayleigh's equation, coupled to an algebraic equation related to the dispersion relation.  On the one hand, in the absence of surface tension and a vortex sheet, this coincides with the one derived formally by Miles.  Thus our results mathematically corroborate and generalize his. On the other hand, taking the background flow to be uniform in the air and water regions, we recover the classical Kelvin--Helmholtz instability criteria (see Example \ref{kelvin helmholtz ex}).  This means that we can consider simultaneously the destabilizing effects of a vortex sheet and a critical layer, and  we can see quantitatively the individual influence of each one on the leading-order terms of the unstable eigenvalue.

\subsubsection*{A proof of Miles's criterion and generalizations}
Using our linearized problem, we look for an unstable eigenvalue.  At this point, in \cite{miles1957windwaves1}, Miles proceeds with the heuristic argument outlined above, concluding that a linear instability will manifest at wavenumber $k$ provided that $\sqrt{g/k}$ is in the range of the wind profile $U_+$, and that it occurs at an altitude where $U_+^{\prime\prime}$ is negative.   

Our presentation is the first rigorous treatment of the system studied by Miles and its generalization that we have derived.   Mathematically, the analysis involves a detailed examination of the Rayleigh  equation in the presence of an imaginary parameter --- the complex wave speed $c$ --- that couples the ODE to the dispersion relation.  

The Rayleigh equation (also called the inviscid Orr--Sommerfeld equation) arises when investigating the stability of laminar flows in a rigid channel and has an extensive literature.  However, the challenges we confront here are quite distinct from those typically encountered.  For shear flows in a channel, the boundary conditions are simply homogeneous Dirichlet, whereas the free surface in our system leads to an inhomogeneous condition on the interface (see also \cite{hur2008unstable}).   More significantly, for channel flows, most rigorous sufficient conditions for instability are based on bifurcation analysis with $c$ near an inflection point of $U$. In such cases, treating the wave number as the bifurcation parameter, one finds that the leading-order part of the Rayleigh equation is in fact not singular. The situation is almost the complete opposite in our work:   the critical layer instability occurs precisely when $c$ is close to some $U(x_2^*)$ with  $U''(x_2^*)\ne 0$. This means that the crucial part of our analysis is near the singularity of the Rayleigh equation where the solutions will develop a logarithmic singularity in the limit $\realpart{c} \to 0$. One may find plenty of asymptotic expansions of the Rayleigh equation solutions near such a singularity in the literature, some of which can be rigorously justified provided that $U$ is analytic \cite{wasow1987asym-ode}. By contrast, our approach based on more modern tools from dynamical systems, requires only that $U_+ \in C^4$ and is transparently rigorous.    Along the way, we obtain a result of independent interest characterizing the limiting behavior of solutions to the Rayleigh equation as the wave speed $c$ converges to the real axis; see Proposition \ref{P:Rayleigh}.  

This analysis is then used to prove the main contribution found in Theorem \ref{T:Instability}.  For a fixed wave number $k$, let $c_k$ denote the wave speed for the corresponding (linear) capillary-gravity wave in finite-depth water beneath vacuum (cf. \eqref{E:ck}).  We obtain a sufficient condition guaranteeing the existence of an unstable wave speed lying in a neighborhood of $c_k$ that is valid even with a vortex sheet and incorporating surface tension. We are, moreover, able to determine the leading-order terms of the unstable eigenvalue, which gives a means of estimating the energy transfer rate; see \eqref{def cpound} and \eqref{c expansion}.   Additionally, in Proposition \ref{P:evalue}, we obtain an exact (though implicit) formula for the dispersion relation analogous to \eqref{intro:milesdispersion}.   All of these results agree with the formal analysis of Miles when the background velocity is continuous over the interface and surface tension is neglected. Moreover, in Lemma \ref{L:necessary}, we show that if a sequence of the unstable wave speed $c_{k, \ep}$ converges to $c_k$ as $\ep = \frac {\rho_+}{\rho_-} \to 0$, then $c_k$ must be in the in the range of $U_+$.  Roughly speaking, this says that a critical layer is necessary for the generation of instability. 
 
It is also worth comparing the linear instability in the channel flow problem with the water-air interface problem. In the case of channel flow between rigid boundaries, where the wave number is often used as the parameter, the instability occurs at isolated wave numbers and thus the linearly unstable waves are superpositions of plane waves of isolated wave numbers. On the contrary, for the water-air interface problem considered here, the instability due to critical layers occurs at all wave number $k$ in certain intervals (or a union of intervals). In particular, with surface tension, sufficiently large or small wave numbers are always stable. Without surface tension, sufficiently long waves are always stable, while the instability of short wave is affected by both the critical layers and Kelvin--Helmholtz instability. The linearly unstable waves are superpositions of waves with wave numbers ranging in an interval. 

\subsubsection*{Unstable waves with critical layer at an inflection point}

The critical layer analysis is valid under the assumption that the shear profile $U$ is of class $C^4$ in the air region.  As an application of the exact dispersion relation, in Section \ref{pathological example section} we construct an explicit piecewise linear $U$ (thus $U^{\prime\prime}$ is a $\delta$-function)  that are linearly unstable.  However, it is shown that, for a certain range of parameters, $c_k$ is in the range of $U$, but occurs away from the mass of the $\delta$-function, hence $U^{\prime\prime}$ vanishes there.    This implies that the instability is \emph{not} arising from resonance with the critical layer but instead from the immensity of $|U^{\prime\prime}|$.   For such waves, we have that wave speed $c$ is at an $O(1)$ distance from $c_k$, whereas the expansion \eqref{intro:milesdispersion} is only valid up to $O(\epsilon)$.  In fact, demonstrates the sharpness of our necessary condition for instability in Lemma \ref{L:necessary}: there we require that $U_+ \in C^2$, whereas these profiles are of class $C^{0,1}$.  

\section{Linearization} \label{S:LE}

In this section, we will derive the linearization of the Euler equation \eqref{intro:eulerianeuler} about a steady solution with graph geometry.  That is, we consider a solutions of the form 
\be \label{E:steadysolution}
S_t^* = S = \{ x_2 = \eta^*(x_1) \}, \qquad v = v^*(x_1, x_2),
\ee
for some smooth wave profile $\eta^*$.  One should think of $(v^*, S)$ as representing a traveling wave observed from a moving reference frame so that it appears stationary.    For the quasi-laminar model, we are specifically interested in shear flows, i.e. solutions of the form 
\begin{equation}\label{E:steadyshear} 
 S= \{ x_2 =0\}, \qquad  v^* ( x_1, x_2) = \big(U(x_2), 0\big), \quad x_1 \in \mathbb{T}, \;  x_2 \in [-h_-, h_+]
\end{equation}
where 
\[
U(x_2) = U_-(x_2) \chi_{\{x_2<0\}} + U_+(x_2) \mathbb{\chi}_{\{x_2 >0\}}
\]
and $U_\pm$ are smooth functions on $\pm x_2 \in [0, h_\pm]$. Note that we are allowing  $v^*$ to have a jump discontinuity over $S$.  Eventually we will consider those shear flows with $U_- \equiv 0$, i.e. the water will be assumed to be stationary while there is wind in the air.  For those shear flows, the pressure is hydrostatic, 
\begin{equation} \label{E:steadypressure}
\nabla p_{\pm}^* = - g \rho_\pm \mathbf{e}_2.
\end{equation}
It is elementary to confirm that all shear flows are solutions of the Euler system.  The existence of traveling waves where $S$ is not flat has been established by many authors in various regimes (cf., e.g., \cite{amick1986global,walsh2013steadywind}).

\subsection*{Admissible spaces and orthogonal decompositions} Before we begin, we must introduce the spaces in which the problem is formulated.  There are several results that prove the local well-posedness of the Cauchy problem for the interface Euler equation \eqref{intro:eulerianeuler} (cf. \cite{cheng2008vortexsheets,shatah2011interface}).  These consider smooth velocity fields and surface profiles, for example $v \in H^s(\Omega_t^\pm)$ for $s >  5/2$.  To establish linear instability, we will assume that the background flow is smooth and seek solutions of the linearized problem that grow in the $L^2$ norm. 

With that in mind, for any time $t \geq 0$, we consider velocity fields $v$ belonging to the $S_t$-dependent space 
\[\begin{split} 
\mathbb{X}(S_t) := \{ v: L^2(\mathbb{T} &\times [-h_-, h_+], \mathbb{R}^2; \rho \, dx) \mid   \nabla\cdot v_\pm =0 \text{ in } \Omega_t^\pm, \; \,\\
& v_\pm \cdot \mathbf{e_2} =0 \text { on } x_2 =\pm h_\pm, \; \, v_+ \cdot N_+ + v_- \cdot N_-=0 \text{ on } S_t\} 
\end{split}\]
where $N_\pm$ are the unit outward normals for $\Omega_t^\pm$.  Note that they obviously satisfy $N_+ + N_-=0$. 
The boundary conditions on $S_t$ included in the definition of $\mathbb{X}(S_t)$ are meant to guarantee that $\nabla\cdot v=0$ in the distribution sense on $\mathbb{T} \times [-h_- , h_+]$. For a divergence free vector field $v$ in $L^2$, its normal component $v \cdot N$ is well-defined in $H^{-\frac 12}(S_t)$, as $S_t$ is smooth (cf., e.g., \cite{temam1977book}).   $\mathbb{X}(S_t)$ is a subspace of $L^2 (\mathbb{T} \times [-h_- , h_+],  \rho \, dx)$ and  its orthogonal complement  $\mathbb{X}(S_t)^\perp$ is given by 
\[
\mathbb{X}(S_t)^\perp = \{ v = - \nabla q \mid q= q_+ \chi_{\Omega_t^+} + q_- \chi_{\Omega_t^-}, \; \, q_\pm \in H^1(\Omega_t^\pm) \text{ and } \rho_+ q = \rho_- q \text{ on } S_t\}.
\]
This can be seen from the Hodge decomposition as described in \cite{shatah2008interfaceapriori, shatah2011interface}. For any $X \in L^2 (\mathbb{T} \times [-h_- , h_+],  \rho \, dx)$,  
\begin{equation} \label{E:decom1}
\textrm{there exists } \; w \in \mathbb{X}(S_t) \text{ such that } X =w -\nabla q \text{ and } -\nabla q\in \mathbb{X}(S_t)^\perp
\end{equation}
where $q$ is determined (uniquely up to a constant) by 
\begin{equation} \label{E:decom2} \begin{cases}
-\Delta q = \nabla\cdot X \qquad \big(\mathbb{T} \times (-h_- , h_+)\big) \backslash S_t\\
q_\pm|_{S_t} = \frac {1}{\rho_\pm}q^S := - \dfrac {1}{\rho_\pm} \CN^{-1}  \big( N_+ \cdot ( X_+ - \nabla \Delta_+^{-1} \nabla\cdot X_+) +N_- \cdot ( X_- - \nabla \Delta_-^{-1} \nabla\cdot X_-) \big)\\
\nabla q_\pm \cdot \mathbf{e}_2 |_{x_2 = \pm h_\pm} = X_\pm \cdot \mathbf{e}_2\end{cases}\end{equation}
Here we write $(\Delta_\pm)^{-1}$ to denote the inverse Laplacian on $\Omega_t^\pm$ with zero Dirichlet boundary condition on $S_t$ and zero Neumann boundary condition on $\{x_2 = \pm h_\pm\}$.  Likewise, we let 
\[
\CN :=\frac 1{\rho_+}\CN_+ + \frac 1{\rho_-}\CN_-
\]
with $\CN_\pm$ being the Dirichlet-to-Neumann operator on $S_t$ associated to $\Delta_\pm$ with zero Neumann data on $\{x_2 = \pm h_\pm\}$. Again, we note that for $X$ as above, 
\[ N_\pm \cdot ( X_\pm - \nabla \Delta_\pm^{-1} \nabla\cdot X_\pm) \in H^{-\frac 12} (S_t),\]
since $X_\pm - \nabla \Delta_\pm^{-1} \nabla\cdot X_\pm$ is in $L^2 (\Omega_t^\pm)$ and divergence free while $S_t$ is assumed to be smooth. 

\subsection*{Linearized evolution equation} To compute the linearized equations, let $\big(S_t(\alpha), v(\alpha, t, x)\big)$ along with the pressure $p(\alpha, t, x)$, be a one-parameter family of solutions of the Euler  equation \eqref{intro:eulerianeuler} that coincide with the steady state $(S, v^*)$ at $\alpha = 0$. Since $S_t$ is close to $S$ (the graph of $\eta^*$) for small $\alpha$, we may represent $S_t$ as the graph of a unique function $\eta$ with $\eta|_{\alpha = 0} = \eta^*$:
\[
S_t =\{x_2 = \eta(\alpha, t, x_1), \; x_1 \in \mathbb{R}\}.
\]
Eventually we will specialize this to the case of a shear flow that is periodic in $x_1$, but for now we work in the general setting.  

From the boundary condition \eqref{intro:euleriandynamic}, we have (suppressing the dependence on $t$ and $\alpha$)
\[
p_+ \big(x_1, \eta(x_1)\big) - p_-\big(x_1, \eta(x_1)\big) = \sigma \kappa = \frac{\sigma \eta_{x_1x_1}}{(1+\eta_{x_1}^2)^{3/2}}.
\]
Let $z (t, x_1) := (\partial_\alpha \eta)|_{\alpha=0}$, which is the component of the linearized solution corresponding to the variation of the interface. Differentiating the above equality we obtain
\begin{equation} \label{E:zeq1}
\sigma \kappa^\prime(\eta^*) z = \p_\alpha p_+ \big(t, x_1, \eta^*(x_1)\big) - \p_\alpha p_- (t, x_1, \eta^*(x_1)\big) + (\p_{x_2} p_+^* - \p_{x_2} p_-^*) (x_1, \eta^* (x_1) ) z
\end{equation}
where 
\[ \kappa^\prime(\eta^*)  := \frac{1}{(1+(\eta_{x_1}^*)^2)^{3/2}} \partial_{x_1}^2 - 3 \frac{\eta_{x_1 x_1}^*}{(1+(\eta_{x_1}^*)^2)^{5/2}} \eta_{x_1}^* \partial_{x_1}.\] 

Since $v(\alpha, t) \in \mathbb{X}(S_t(\alpha))$, it follows that
\[
\Big( v_+ \big(x_1, \eta(x_1) \big) - v_- \big( x_1, \eta(x_1)\big) \Big) \cdot (- \p_{x_1} \eta , 1)^T =0.
\]
In the above equation, we are again suppressing the dependence on $\alpha$ and $t$.  Differentiating in $\alpha$ and evaluating at $\alpha =0$, we obtain 
\be \left( \jump{ \p_\alpha v} + \jump{ \partial_{x_2} v^*} z \right) \cdot (-\partial_{x_1} \eta^*, 1)^T  =  \jump{v^* \cdot \mathbf{e}_1}  \p_{x_1} z \quad \text{ on } S.   
 \label{general LV1} \ee
 Here, for a function $f$ defined on $\mathbb{T} \times [-h_- , h_+]$, we write $\jump{f} := (f_+ - f_-)|_S$. Observe that this computation shows that in general the linearized velocity field $\p_\alpha v$ is \emph{not} in $\mathbb{X}(S)$ even though $\p_\alpha v_\pm$ is divergence free in $\Omega_\pm$ and $\p_\alpha v_\pm \cdot \mathbf{e}_2=0$ along $\{x_2=\pm h_\pm\}$; see also Remark \ref{linearization remark}.   Our next step is therefore to decompose $\p_\alpha v$ into two components, one lying in $\mathbb{X}(S)$ and the other in $\mathbb{X}(S)^\perp$:
\[
\p_\alpha v = Y + \nabla \Gamma, \quad Y \in \mathbb{X}(S) \text{ and } \nabla \Gamma \in \mathbb{X}(S)^\perp.
\]
From \eqref{E:decom1}, \eqref{E:decom2}, and \eqref{general LV1} 
\be\label{general gammaeq}  \left\{ \begin{array}{ll} \Delta \Gamma = 0 \quad \qquad \textrm{in }  \Omega \\ 
\partial_{x_2} \Gamma_\pm = 0 \qquad \textrm{on } \{x_2 = \pm h_\pm\}, \\ 
\Gamma_\pm = \dfrac{1}{\rho_\pm} \mathcal{G}^{-1} \left( \jump{ \partial_{x_2} v^*} z \cdot (-\partial_{x_1} \eta^*, 1)^T -  \jump{v_1^*} \partial_{x_1}  z \right)  \qquad \textrm{on } S,  \end{array} \right.  \ee
where $\mathcal{G}$ is the non-normalized Dirichlet--Neumann operator for $S$ in $\mathbb{T} \times [-h_- , h_+]$:
\[ \mathcal{G}_\pm := \sqrt{1+(\eta_{x_1}^*)^2} \mathcal{N}_\pm, \qquad \mathcal{G} := \frac{1}{\rho_+} \mathcal{G}_+ + \frac{1}{\rho_-} \mathcal{G}_-.\]

Likewise, linearizing the momentum equation \eqref{intro:eulerianeuler} we find that $\p_\alpha v= Y + \nabla \Gamma$ satisfies
\be \label{general Yeq} Y_t + (v^* \cdot \nabla) Y + (v^* \cdot \nabla) \nabla \Gamma +  (Y \cdot \nabla) v^* + (\nabla \Gamma \cdot \nabla) v^* + \nabla P = 0 \qquad \textrm{in } \Omega,  \ee
where $Y=(Y_1, Y_2)^T$ and 
\[
P = \frac 1\rho p_\alpha + \Gamma_t. 
\]
Since $Y \in \mathbb{X}(S)$, taking the divergence and the normal component along $x_2 = \pm h_\pm$ of \eqref{general Yeq} and using \eqref{E:zeq1} and \eqref{general gammaeq}, we can determine $P$ by solving
\be \label{general Peq} \begin{cases} 
-\Delta P =  \nabla \cdot \left(  (v^* \cdot \nabla) Y + (v^* \cdot \nabla) \nabla \Gamma +  (Y \cdot \nabla) v^* + (\nabla \Gamma \cdot \nabla) v^* \right) \qquad \textrm{in } \Omega \\
\partial_{x_2} P = 0 \qquad \textrm{on } \{ x_2 = \pm h_\pm \} \\
\rho_+ P_+ - \rho_- P_- =  
- (\p_{x_2} p_+^* - \p_{x_2} p_-^*) z+\sigma \kappa^\prime(\eta^*) z  \quad \text{ on } S
\end{cases}\ee

Finally, because $S_t$ is given as the graph of $\eta$, the fact that the normal velocity of the fluid interface coincides with the normal component of the velocity field along the interface translates to the following statement
\[
\eta_t (x_1)  = v(x_1, \eta(x_1)) \cdot (-\eta' (x_1), 1)^T.  
\]
Linearizing this equality gives
\be z_t + (v_\pm^* \cdot \mathbf{e}_1)  \partial_{x_1} z = \left( \partial_\alpha v_\pm +  \partial_{x_2} v_\pm^* z \right) \cdot (-\partial_{x_1} \eta^*, 1)^T \qquad \textrm{on } S.
\label{general zeq2} \ee 
Due to \eqref{general LV1}, the above equation does not depend on the choice of $+$ or $-$ sign. 

Evolution equations \eqref{general Yeq} and \eqref{general zeq2} along with \eqref{general gammaeq} and \eqref{general Peq} form the linearized system of the two phase fluid Euler equation at an arbitrary steady solution $(v^*, S)$ with graph geometry.   

In the present work, we are mainly interested in the stability of shear flows under periodic perturbations.  It is therefore useful to  record how these equations simplify for such flows.  If we take $(v^*, S)$ to be given as in \eqref{E:steadyshear}, then in particular $S = \mathbb{T} \times \{0\}$, so $\mathcal{G} = \mathcal{N}$ and $\kappa^\prime(0) = \partial_{x_1}^2$.  Thus \eqref{general Yeq} and \eqref{general zeq2} become
\begin{align}
Y_t + U Y_{x_1} + U' Y_2 \mathbf{e_1} + U \nabla \Gamma_{x_1} + \Gamma_{x_2} U' \mathbf{e}_1  +\nabla P  = 0 &  \qquad \textrm{in } S \label{E:Yeq} \\
z_t + U_\pm (0) z_{x_1} = \p_\alpha v_\pm \cdot \mathbf{e}_2 =  Y_2  + \p_{x_2}\Gamma_\pm  & \qquad  \textrm{on } \{x_2 = 0\}, \label{E:zeq2} 
 \end{align}
where $\Gamma$ and $P$ are determined from 
\be \left\{ \begin{array}{ll} \Delta \Gamma = 0 \quad \qquad \textrm{in } \Omega \\ 
\partial_{x_2} \Gamma_\pm |_{x_2 = \pm h_\pm} = 0 \qquad 
\Gamma_\pm|_S = \frac 1{\rho_\pm} \big(U_-(0) - U_+(0)\big) \partial_{x_1} \mathcal{N}^{-1} z,  \end{array} \right. \label{gammaeq} \ee
and
\be \label{E:Peq} \begin{cases} 
-\Delta P = 2 U' (\partial_{x_1} Y_2 + \Gamma_{x_1x_2}) \qquad \textrm{in } \Omega \\
\partial_{x_2} P  =0 \qquad \textrm{on } \{ x_2 = \pm h_\pm\} \\
 \rho_+ P_+ - \rho_- P_- =  g (\rho_+ - \rho_-) z +\sigma z_{x_1x_1} \qquad \text{ on } S =\{x_2=0\}.
\end{cases}\ee

\subsection*{Eigenvalues and eigenfunctions} Notice that the coefficients in the linearized system \eqref{E:Yeq}, \eqref{E:zeq2}, \eqref{gammaeq}, and \eqref{E:Peq} depend only on $x_2$.  Therefore, each Fourier mode $e^{ikx_1}$ is decoupled from the other modes.  Consider solutions of the linearized system taking the form 
\be \label{E:EM1}
(z, Y, \Gamma, P) = \big( \mathpzc{z},  \mathcal{Y} (x_2),  \gamma (x_2), \mathcal{P}(x_2) \big) e^{ik(x_1 - ct)}, \qquad k \in \mathbb{Z}\backslash \{0\},
\ee
which represents and eigenfunction for the linearized system corresponding to an eigenvalue $-ikc$. Clearly, the existence of a solution of this type with  $\imagpart{c} >0$ immediately implies exponential linear instability. 

In what follows, we derive a linear system for the above unknowns, fixing the Fourier mode $k$.  Before doing that, let us record the symbol for the Dirichlet--Neumann operators that we employ:
\be \label{E:CN} \begin{split}
\widehat{\mathcal{N}_\pm}(k) & = |k| \tanh{(|k|h_\pm)} \\
\widehat{\mathcal{N}}(k) & = \frac{1}{\rho_+} \widehat{\mathcal{N}_+}(k) + \frac{1}{\rho_-} \widehat{\mathcal{N}_-}(k) = |k| \left(\frac 1{\rho_+} \tanh{(|k|h_+)} + \frac 1{\rho_-} \tanh{(|k|h_-)}\right).
\end{split} \ee

We can then solve \eqref{gammaeq} to find
\be  \begin{split} 
{\gamma}_\pm(x_2) &= \frac{ik (U_-(0) -U_+(0)) \mathpzc{z}}{\rho_\pm \widehat{\mathcal{N}}(k) \cosh{(h_\pm |k|)}} \cosh{(|k|(x_2 \mp h_\pm))}\\
&= \frac{\rho_\mp (U_-(0) -U_+(0)) }{|k| \big(\rho_- \tanh{(|k|h_+)} + \rho_+ \tanh{(|k|h_-)}\big)}  \frac {\cosh{(|k|(x_2 \mp h_\pm))}} {\cosh{(h_\pm |k|)}} ik \mathpzc{z}.
\label{gammahatformula} \end{split} \ee
Substituting \eqref{E:EM1} and \eqref{gammahatformula} into \eqref{E:zeq2} (with the $+$ sign at $x_2=0$), we obtain 
\[\begin{split} 
ik (U_+(0) - c)  \mathpzc{z} &= \mathcal{Y}_2(0) - \widehat{\mathcal{N}_+}(k)  \gamma_+ (0)  = \mathcal{Y}_2(0) -  |k| \tanh{(|k|h_+)}  \gamma_+ (0)\\
&= \mathcal{Y}_2(0) - \frac{\rho_- (U_-(0) -U_+(0)) \tanh{(|k| h_+)}} {\rho_- \tanh{(|k|h_+)} + \rho_+ \tanh{(|k|h_-)}}ik \mathpzc{z}
\end{split}\]
which implies 
\be \label{E:zhatformula} 
ik \Big( \frac {\rho_+ U_+(0) \tanh{(|k|h_-)} + \rho_- U_-(0) \tanh{(|k|h_+) }}{\rho_- \tanh{(|k|h_+)} + \rho_+ \tanh{(|k|h_-)}} - c \Big) \mathpzc{z} = \mathcal{Y}_2(0).
\ee
Again we recall that because $Y \in \mathbb{X}(S)$, $\mathcal{Y}_{2+}(0)=\mathcal{Y}_{2-}(0)$. 

In the next step, we will use the fact that $Y$ is divergence free, along with its boundary behavior, to eliminate $\mathcal{P}$ and $\mathcal{Y}_1$, obtaining an equation for $(\mathcal{Y}_2, \mathpzc{z})$.  Notice, for instance, that because $Y$ is divergence free,
\[ ik \mathcal{Y}_1 + \p_{x_2} \mathcal{Y}_2 =0.\]
Thus $\mathcal{Y}_1$ can be determined from $\mathcal{Y}_2$.  In light of this observation, the horizontal component of \eqref{E:Yeq} becomes 
\be \label{E:Y1hat}
- (U- c)  \mathcal{Y}_2^\prime  +  U'  \mathcal{Y}_2 - k^2 U  \gamma +  \gamma^\prime U^\prime  +ik \mathcal{P}=0.
\ee
On the other hand, the vertical component of \eqref{E:Yeq} implies 
\be \label{E:YVertical}
ik (U -c) \mathcal{Y}_2 + ik U \gamma^\prime+ \mathcal{P}^\prime=0. 
\ee
Using the above two equations, we can eliminate $\mathcal P$, obtaining the ODE 
\be \label{E:Y2hatformula}
- \mathcal{Y}_2^{\prime\prime}  + (\frac {U''}{U-c} + k^2) \mathcal{Y}_2 + \frac {U''}{U-c} \gamma^\prime =0,
\ee
where the prime denotes $\partial_{x_2}$ and we have used the fact $\Delta \Gamma=0$.  As $Y \in \mathbb{X}(S)$, we have moreover that  $\mathcal{Y}_{2\pm} (\pm h)=0$. The behavior of  $\mathcal{Y}_2$ on the interface is dictated by \eqref{E:zhatformula}. Recall also that the boundary behavior of $P$ is given by \eqref{E:Peq}, which, together with \eqref{E:Y1hat}, implies 
\be \label{E:ceq}\begin{split} 
\left(g \jump{\rho} - \sigma k^2\right) \mathpzc{z} &=  \jump{\rho \mathcal{P}} \\
&= \frac ik \left( - \jump{\rho (U -c)  \mathcal{Y}_{2}^\prime}  + \jump{\rho U'}  \mathcal{Y}_{2}  - k^2 \jump{U}\rho_+\gamma_+  + \jump{ \rho U'  \gamma^\prime} \right),
\end{split}\ee 
as $\rho_+ \gamma_+ = \rho_-  \gamma_-$ on $S$.  

In summary, we have the following result.

\begin{proposition} \label{P:linear}
$-ikc$ is an eigenvalue of the linearized two phase fluid Euler equation if there exist nontrivial linearized solutions of a single Fourier mode of the form  \eqref{E:EM1}.  This is equivalent to the existence of a nontrivial solution $(\mathpzc{z}, \mathcal{Y}_2,  \gamma, c)$ to \eqref{gammahatformula}, \eqref{E:zhatformula}, \eqref{E:Y2hatformula}, \eqref{E:ceq} and such that $\mathcal{Y}_{2\pm}(\pm h) =0$. 
\end{proposition}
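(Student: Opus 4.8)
The statement asserts the equivalence of two things: the existence of a nontrivial single-mode solution $(z,Y,\Gamma,P)$ of the form \eqref{E:EM1} of the full linearized system \eqref{E:Yeq}, \eqref{E:zeq2}, \eqref{gammaeq}, \eqref{E:Peq}; and the existence of a nontrivial tuple $(\mathpzc{z},\mathcal{Y}_2,\gamma,c)$ solving the reduced system \eqref{gammahatformula}, \eqref{E:zhatformula}, \eqref{E:Y2hatformula}, \eqref{E:ceq} together with the endpoint conditions $\mathcal{Y}_{2\pm}(\pm h)=0$. The forward implication has essentially been carried out in the computations preceding the statement: inserting the ansatz \eqref{E:EM1} into \eqref{gammaeq} and solving the resulting constant-coefficient two-point problem gives \eqref{gammahatformula}; substituting this into \eqref{E:zeq2} with the $+$ trace gives \eqref{E:zhatformula}; taking the two scalar components of \eqref{E:Yeq}, using divergence-freeness of $Y$ to write $\mathcal{Y}_1=(i/k)\mathcal{Y}_2'$, and eliminating $\mathcal{P}$ with the help of $\Delta\Gamma=0$ gives \eqref{E:Y2hatformula}; and combining the jump relation in \eqref{E:Peq} with the horizontal momentum balance \eqref{E:Y1hat} gives \eqref{E:ceq}, while the endpoint conditions and the continuity $\mathcal{Y}_{2+}(0)=\mathcal{Y}_{2-}(0)$ are exactly the statement $Y\in\mathbb{X}(S)$. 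So the content to be supplied is the converse, and the plan is a reconstruction argument.

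\textbf{Reconstruction.} Starting from a nontrivial $(\mathpzc{z},\mathcal{Y}_2,\gamma,c)$ solving the reduced system, I would set $z:=\mathpzc{z}\,e^{ik(x_1-ct)}$, $\Gamma:=\gamma(x_2)\,e^{ik(x_1-ct)}$, $\mathcal{Y}_1:=(i/k)\mathcal{Y}_2'$, and $Y:=(\mathcal{Y}_1,\mathcal{Y}_2)\,e^{ik(x_1-ct)}$. First I would check that $\Gamma$ genuinely solves \eqref{gammaeq}: the explicit profile in \eqref{gammahatformula} satisfies $\gamma''=k^2\gamma$ (hence $\Delta\Gamma=0$), has vanishing $x_2$-derivative at $x_2=\pm h_\pm$, and realizes the prescribed Dirichlet trace on $S$, since \eqref{gammahatformula} is precisely the solution formula for that boundary value problem. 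Next I would verify $Y\in\mathbb{X}(S)$: it is divergence free by the definition of $\mathcal{Y}_1$, it satisfies $Y_\pm\cdot\mathbf{e}_2=0$ on $x_2=\pm h_\pm$ by the assumed endpoint conditions, and its normal component is continuous across $S$ because \eqref{E:zhatformula} assigns a single value to $\mathcal{Y}_2(0)$, forcing $\mathcal{Y}_{2+}(0)=\mathcal{Y}_{2-}(0)$.

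\textbf{Pressure and evolution equations.} It remains to produce $\mathcal{P}$ and verify the evolution equations. I would define $\mathcal{P}$ on each of $\Omega^\pm$ by integrating the first-order relation \eqref{E:YVertical}, which involves only $\mathcal{Y}_2,\gamma,c$, leaving one additive constant free in each region; this makes \eqref{E:YVertical} hold, and since $\gamma'(\pm h_\pm)=0=\mathcal{Y}_2(\pm h_\pm)$ it also yields $\mathcal{P}'(\pm h_\pm)=0$. Differentiating the left-hand side of \eqref{E:Y1hat} and using \eqref{E:Y2hatformula} together with $\gamma''=k^2\gamma$ shows that this expression is locally constant on $\Omega^\pm$; since $k\neq 0$ the $ik\mathcal{P}$ term depends affinely on the free constant, so one can fix the two constants so that \eqref{E:Y1hat} holds on each side. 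With these choices \eqref{E:Yeq} holds in $\Omega$ for $P:=\mathcal{P}\,e^{ik(x_1-ct)}$, and \eqref{E:Peq} follows by taking the divergence and the $\mathbf{e}_2$-trace of \eqref{E:Yeq}, except for its jump condition: a short computation using $\jump{\mathcal{Y}_2}=0$ and $\rho_+\gamma_+=\rho_-\gamma_-$ on $S$ shows that $\jump{\rho\mathcal{P}}$ equals the bracket on the right of \eqref{E:ceq}, so the assumed identity \eqref{E:ceq} is exactly that jump condition. Finally, \eqref{E:zeq2} with the $+$ trace is \eqref{E:zhatformula}; the $-$ trace version differs from it only by the relation $\jump{U}z_{x_1}=\jump{\Gamma_{x_2}}$, which I would confirm directly from \eqref{gammahatformula} (this is the shear-flow incarnation of \eqref{general LV1}). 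Hence $(z,Y,\Gamma,P)$ is a nontrivial single-mode solution with eigenvalue $-ikc$.

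\textbf{Main obstacle.} The delicate part is not any individual computation but the interface bookkeeping. One must ensure that the normal-continuity condition contained in the definition of an admissible velocity field is not silently lost in passing to the reduced unknowns --- this is why \eqref{E:zhatformula} has to be read as one scalar identity holding for the common boundary value $\mathcal{Y}_2(0)$ --- and one must correctly match the two additive constants available in $\mathcal{P}$ (one per fluid region) against the two scalar requirements that the left side of \eqref{E:Y1hat} vanish in $\Omega^+$ and in $\Omega^-$. Only after this matching does \eqref{E:ceq} occupy precisely the slot of the linearized dynamic jump condition for the pressure, rather than appearing as a spurious extra constraint. Everything else is direct verification using the explicit harmonic profile \eqref{gammahatformula} and the Fourier symbols \eqref{E:CN}.
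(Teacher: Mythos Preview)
Your proposal is correct and subsumes the paper's argument. In the paper, the proposition is stated as a summary of the preceding derivation (``In summary, we have the following result''), so the paper's proof is precisely the forward reduction you describe in your first paragraph; the converse reconstruction is left implicit. Your reconstruction argument for the converse --- defining $\mathcal{Y}_1:=(i/k)\mathcal{Y}_2'$, recovering $\mathcal{P}$ by integrating \eqref{E:YVertical} and fixing the two integration constants via \eqref{E:Y1hat}, then reading \eqref{E:ceq} as the pressure jump and verifying the $-$ trace of \eqref{E:zeq2} through the identity $\jump{\gamma'}=ik\jump{U}\mathpzc{z}$ --- is a clean way to close the equivalence and goes beyond what the paper writes out explicitly. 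The interface bookkeeping you flag (that \eqref{E:zhatformula} pins down a single value $\mathcal{Y}_2(0)$, thereby encoding the normal continuity in $\mathbb{X}(S)$) is exactly the right point to highlight.
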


\begin{remark} 
The above calculation is still valid if one or both of $h_\pm$ becomes infinity. For example, if $h_-=\infty$, then defining $\tanh (|k|h_-)=1$, we get 
\begin{gather*}
 \gamma_-(x_2) = \frac{\rho_+ (U_-(0) -U_+(0)) e^{|k|x_2} }{|k| \big(\rho_- \tanh{(|k|h_+)} + \rho_+ \big)} ik \mathpzc{z}, \\ \mathcal{Y}_2(0)=ik \Big( \frac {\rho_+ U_+(0) + \rho_- U_-(0) \tanh{(|k|h_+) }}{\rho_- \tanh{(|k|h_+)} + \rho_+} - c \Big) \mathpzc{z} . 
\end{gather*}
\end{remark} 

\begin{remark}  Note that, up to this point, we have not exploited any small parameters in the problem.  So, for instance, Proposition \ref{P:linear} holds even for a system consisting of two fluids with roughly equal density, as one would expect with internal waves moving through a channel.  Of course, the heart of the analysis to come is in determining $\mathcal{Y}_{2+}^\prime(0)$, and for this we will make strong use of the assumption that $\rho_+/\rho_- \ll 1$.
\end{remark}

\begin{remark} \label{R:deltamass}
While $U''$ appears in equation \eqref{E:Y2hatformula}, it is actually not necessary to assume $U_\pm^{\prime\prime}$ exists in the strong sense. In fact,  in the hypotheses of Proposition \ref{P:linear}, one may replace \eqref{E:Y2hatformula} by the following equation derived from  \eqref{E:Y1hat} and \eqref{E:YVertical}:
\[
\Big(- (U- c)  \mathcal{Y}_2^\prime  +  U'  \mathcal{Y}_2 - k^2 U  \gamma +  \gamma^\prime U^\prime\Big)' = ik \Big( ik (U -c) \mathcal{Y}_2 + ik U \gamma^\prime \Big).
\]
This is of course equivalent to \eqref{E:Y2hatformula} if $U_\pm \in C^2$, but makes sense even if $U_\pm^\prime$ has jump discontinuities.  In the latter case, one expects that $\mathcal{Y}_2'$ will likewise exhibit jump discontinuities at the same locations as $U'$.  In particular, Proposition \ref{P:linear} is still valid even if $U_\pm^\prime$ has jump discontinuities in the bulk of the air or water regions, which corresponds to the situation where of $U''$ possesses $\delta$-masses. This justifies our consideration of piecewise linear wind profiles in Section \ref{pathological example section}.
\end{remark}

\begin{remark} \label{linearization remark}
 Let us now revisit the question of how our method differs from the formal linearization procedure.  In \eqref{general LV1}, we demonstrated that $\partial_\alpha v$ will not satisfy the correct boundary conditions on the interface $S$ unless 
 \[ \jump{\partial_{x_2} v^*}z \cdot (-\partial_{x_1} \eta^*, 1)^T - \jump{v^* \cdot \mathbf{e}_1} \partial_{x_1} z = 0.\]
 Note that in the special case of shear flow, this simplifies to 
 \[ \jump{ v^* \cdot \mathbf{e}_1}  = 0,\]
 which is precisely the statement that there is no vortex sheet.  Hence, for a background flow that is continuous over the interface, $\partial_\alpha v$ is indeed the right linearized quantity to consider in the sense that it is in $\mathbb{X}(S)$, but the second one allows for a vortex sheet, or a non-laminar flow, this ceases to be the case.    
 
This is not merely a technical point.  Observe that the tangential velocity does not affect the motion of the fluid interface, and so to truly have a statement about the formation of surface waves, one must guarantee that the instability is for the dynamics of the normal velocity.  Our splitting method is precisely what allows us to do this, and what enables us to see directly what quantities must be controlled in order to ensure stability/instability.  

Lastly, we mention that this has an underlying geometric intuition.  Consider the Lagrangian formulation of the Euler interface problem.  Incompressibility is equivalent to the statement that the restriction of the Lagrangian flow map to each fluid region is volume preserving.  The set of such mappings can be viewed as a submanfiold $\mathcal{M}$ embedded in $L^2(\Omega_0; \rho \, dx)$ (cf., \cite{arnold1966geometrie,brenier1999minimal,ebin1970groups,shatah2008interfaceapriori, shnirelman1985geometry}).  Naturally, the linearized problem about a particular Lagrangian flow map $u_0$ should then be set on the tangent space $T_{u_0} \mathcal{M}$, and the corresponding variation of the Eulerian velocity should lie in $u_0(T_{u_0} \mathcal{M})$.   When one formally linearizes directly in the  Eulerian variables, there is no guarantee that this will be the case because $\mathcal{M}$ is not a flat manifold.  In effect, by thinking exclusively in terms of Eulerian variables, one risks losing a crucial piece of geometric data:  the base point of the tangent space.  Our procedure is carried out in the physical variables, but the splitting is done exactly so that $Y \circ u_0 \in T_{u_0} \mathcal{M}$ and $\nabla \Gamma \circ u_0 \in (T_{u_0} \mathcal{M})^\perp$.  

\end{remark}

\section{Linear instability and critical layers} \label{S:CLayer}

In this section, we consider the physical regime  where 
\[
0< \ep :=  \frac{\rho_+}{\rho_-}  \ll 1, \qquad U_-\equiv 0,
\]
which means the upper fluid (the  air) has much lower density than the lower fluid (the water), and that the lower fluid is stationary.  Moreover, since we are interested in the problem of wind-generation of surface waves, we look for linearized unstable solutions with $\mathpzc{z} \ne0$. Without loss of generality, we normalize by taking 
\[
ik \mathpzc{z}=1. 
\]

\subsection{Derivation of the dispersion relation} Under the above assumptions, $ \gamma$ satisfies 
\be \label{E:gammaeq1}\begin{split} 
& {\gamma}_+ (0) = - \frac{ U_+(0) }{|k| \big( \tanh{(|k|h_+)} + \ep \tanh{(|k|h_-)}\big)}, \quad {\gamma}_+ ^\prime(0) = - |k| \tanh(|k|h_+) 
{\gamma}_+ (0)\\
& {\gamma}_+^\prime (x_2) = |k| {\gamma}_+ (0) \frac {\sinh{(|k|(x_2 - h_+))}} {\cosh{(|k| h_+)}}. 
\end{split} \ee
The boundary conditions for $\mathcal{Y}_2$, which solves equation \eqref{E:Y2hatformula}, take the form 
\be \label{E:Y2hateqBC1} 
\mathcal{Y}_{2\pm} (\pm h_\pm) =0, \qquad  \mathcal{Y}_2(0)
= -\ep |k| \tanh(|k|h_-)  {\gamma}_+ (0)  -c.
\ee
Since $U_- \equiv 0$ in the water, $\mathcal{Y}_{2-}$ can be determined  explicitly: 
\be \label{E:Y2hat-} 
\mathcal{Y}_{2-} (x_2) = \frac { \cosh{(|k|(x_2 + h_-))}} {\cosh{(|k|h_-)}}\mathcal{Y}_2(0), \quad  \mathcal{Y}_{2-}^\prime (0) = |k|\tanh{(|k|h_-)} \mathcal{Y}_{2}(0).
\ee
Therefore, the unknowns for the linearized systems reduce to $(c,  \mathcal{Y}_{2+})$. In addition to boundary conditions \eqref{E:Y2hateqBC1}, $\mathcal{Y}_{2+}$ satisfies 
\[
-  \mathcal{Y}_{2+}^{\prime\prime}  + (\frac {U_+''}{U_+-c} + k^2) \mathcal{Y}_{2+} + \frac {U_+''}{U_+-c} \gamma_+^\prime =0.
\]
We first perform a change of variables to transform this equation into a homogeneous one.  Let 
\[
y= \frac { \mathcal{Y}_{2+} +  \gamma_+^\prime}{\mathcal{Y}_2(0) +  \gamma_+^\prime (0)} = \frac { \mathcal{Y}_{2+} +  \gamma_+^\prime}{U_+(0) -c}.
\]
Then $y$ satisfies 
\be \label{E:yeq1}
- y'' + (\frac {U_+''}{U_+-c} + k^2)y =0 \text{ on } x_2 \in (0, h_+), \qquad y(h_+)=0
\ee
along with the normalizing condition 
\be \label{E:yeq2} 
y(0)=1.
\ee
This is simply the classical Rayleigh's equation that is well-known in the study of the linear instability of shear flows on fixed strips.  Notice, however, that the boundary condition for $y$ is inhomogeneous due to the interface motion. Returning to \eqref{E:ceq}, we see that $-ikc$ is an eigenvalue of the linearized problem if 
\[\begin{split} 
g (1 - \ep) + \frac {\sigma k^2}{\rho_-}  &= -\ep \big(U_+(0) -c\big)  \mathcal{Y}_{2+}^\prime(0) - c  \mathcal{Y}_{2-}^\prime(0) + \ep U_+'(0) \mathcal{Y}_2 (0) \\
&\qquad \qquad \qquad \qquad \qquad - \ep k^2 U_+ (0)  \gamma_+ (0)  + \ep U_+'(0)   \gamma_+^\prime(0). 
\end{split}\]
Substituting \eqref{E:gammaeq1}, \eqref{E:Y2hateqBC1}, and \eqref{E:Y2hat-} into the above equation we obtain
\[\begin{split} 
g (1 - \ep) + \frac {\sigma k^2}{\rho_-}  &= -\ep \big(U_+(0) -c\big)  \mathcal{Y}_{2+}^\prime(0) + c\big(c|k| \tanh{(|k|h_-)} - \ep U_+'(0)\big)  \\
&\qquad +\ep |k| \big(c|k| \tanh^2{(|k|h_-)} -\ep \tanh{(|k|h_-)} U_+'(0) \\
& \qquad  -|k| U_+(0) - U_+'(0) \tanh{(|k|h_+)} \big) \gamma_+ (0). 
\end{split}\]
Finally, in terms of $y$, we have
\be \label{E:yeqBC}\begin{split} 
g (1 - \ep) + \rho_-^{-1} \sigma k^2  &= -\ep \big(U_+(0) -c\big)^2 y'(0) + c^2 |k| \tanh{(|k|h_-)}+\ep U_+'(0) \big(U_+(0) -c\big)\\
& + \frac {\ep c |k| U_+(0)\big(1-\tanh^2{(|k|h_-)}\big)}{\tanh{(|k|h_+)} + \ep \tanh{(|k|h_-)}}.
\end{split}\ee
This is the dispersion relation for the linearized problem with a quiescent ocean and a shear flow in the air.  

We summarize the computations above in the following proposition.
\begin{proposition} \label{P:evalue}
The linearization of the two phase fluid Euler equation at the shear flow $v=U_+(x_2) \chi_{\mathbb{T} \times [0, h_+]} \mathbf{e}_1$ has an eigenvalue $-ikc$ if a solution $y$ of \eqref{E:yeq1} satisfies \eqref{E:yeq2} and \eqref{E:yeqBC}. 
\end{proposition}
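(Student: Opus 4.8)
The plan is to obtain Proposition \ref{P:evalue} as a corollary of Proposition \ref{P:linear}, by observing that every reduction performed in this section is, under the standing hypotheses $U_-\equiv 0$ and the normalization $ik\mathpzc{z}=1$, an equivalence. Throughout I restrict to $c$ for which $U_+-c$ does not vanish on $[0,h_+]$ --- in particular any $c$ with $\imagpart{c}\ne 0$, which covers all unstable modes --- so that the coefficient in \eqref{E:yeq1} and the change of variables below are well defined; the delicate limiting regime $\imagpart{c}\to 0$ is deferred to Proposition \ref{P:Rayleigh}. Since we have normalized $ik\mathpzc{z}=1$, the eigenfunction is automatically nontrivial, and we are looking precisely at the eigenvalues associated with a genuine surface deformation $\mathpzc{z}\ne 0$.

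First I would run the reduction forward. With $U_-\equiv 0$, solving \eqref{gammahatformula} produces the explicit air-side profile \eqref{E:gammaeq1}; note in passing that $\Delta\Gamma=0$ gives $\gamma_+''=k^2\gamma_+$, that $\gamma_+'(h_+)=0$, and that $\rho_+\gamma_+=\rho_-\gamma_-$ on $S$. In the water region $U''\equiv 0$ reduces \eqref{E:Y2hatformula} to the constant-coefficient equation $-\mathcal{Y}_{2-}''+k^2\mathcal{Y}_{2-}=0$, which together with $\mathcal{Y}_{2-}(-h_-)=0$ forces \eqref{E:Y2hat-}. Meanwhile \eqref{E:zhatformula} with $ik\mathpzc{z}=1$ collapses to the inhomogeneous Dirichlet datum \eqref{E:Y2hateqBC1} for $\mathcal{Y}_2(0)=\mathcal{Y}_{2+}(0)=\mathcal{Y}_{2-}(0)$. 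Hence the only surviving unknowns are $c$ and $\mathcal{Y}_{2+}$ on $(0,h_+)$. I then introduce $y:=(\mathcal{Y}_{2+}+\gamma_+')/(U_+(0)-c)$: differentiating twice and using $\gamma_+''=k^2\gamma_+$ together with the air-side restriction of \eqref{E:Y2hatformula}, the inhomogeneous term cancels and $y$ solves the homogeneous Rayleigh equation of \eqref{E:yeq1}; the endpoint condition $y(h_+)=0$ follows from $\mathcal{Y}_{2+}(h_+)=\gamma_+'(h_+)=0$, while $y(0)=1$, i.e.\ \eqref{E:yeq2}, follows from the identity $\mathcal{Y}_2(0)+\gamma_+'(0)=U_+(0)-c$, which one verifies by inserting \eqref{E:gammaeq1} into \eqref{E:Y2hateqBC1}. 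Finally, substituting \eqref{E:gammaeq1}, \eqref{E:Y2hateqBC1}, \eqref{E:Y2hat-}, and $\mathcal{Y}_{2+}'(0)=(U_+(0)-c)y'(0)-k^2\gamma_+(0)$ into the dispersion relation \eqref{E:ceq} and simplifying yields exactly \eqref{E:yeqBC}.

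For the implication asserted in the statement I reverse this chain. Given $c$ as above and a solution $y$ of \eqref{E:yeq1} satisfying \eqref{E:yeq2} and \eqref{E:yeqBC}, set $\mathpzc{z}:=1/(ik)$, take $\gamma_\pm$ as in \eqref{E:gammaeq1} and its water-region counterpart from \eqref{gammahatformula}, put $\mathcal{Y}_{2+}:=(U_+(0)-c)y-\gamma_+'$ on $(0,h_+)$, and define $\mathcal{Y}_{2-}$ by \eqref{E:Y2hat-} with $\mathcal{Y}_2(0):=\mathcal{Y}_{2+}(0)$. Each of \eqref{gammahatformula}, \eqref{E:zhatformula}, \eqref{E:Y2hatformula} and \eqref{E:ceq}, as well as the boundary conditions $\mathcal{Y}_{2\pm}(\pm h_\pm)=0$, then holds by the equivalences just established, so Proposition \ref{P:linear} applies and $-ikc$ is an eigenvalue of the linearized two-phase Euler equation. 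The argument is essentially bookkeeping; the only mildly delicate points are checking that the term $\frac{U_+''}{U_+-c}\gamma_+'$ in \eqref{E:Y2hatformula} is exactly what the change of variables needs to eliminate, and carrying out the algebraic reduction of \eqref{E:ceq} to \eqref{E:yeqBC} --- both direct.
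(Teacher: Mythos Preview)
Your proposal is correct and follows essentially the same approach as the paper: the proposition is stated immediately after the derivation in Section~\ref{S:CLayer} with the remark ``We summarize the computations above in the following proposition,'' so the paper's proof \emph{is} that derivation, and you have reproduced it faithfully. Your only additions are the explicit caveat that $U_+-c$ must not vanish (handled by $\imagpart c\neq 0$) and a slightly more careful treatment of the reverse implication --- reconstructing $(\mathpzc z,\gamma_\pm,\mathcal Y_{2\pm})$ from $y$ --- which the paper leaves implicit in the chain of equivalences; both are sensible clarifications rather than departures.
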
 

When $\ep=0$, equation \eqref{E:yeqBC} has a pair of solutions for $c$:
\begin{equation} \label{E:ck}
c_k = \sqrt{\frac {g + \rho_-^{-1} \sigma k^2}{|k| \tanh{(|k|h_-)}}} \text{ or } - \sqrt{\frac {g + \rho_-^{-1} \sigma k^2}{|k| \tanh{(|k|h_-)}}}.
\end{equation}
This is simply the dispersion relation of the one phase fluid problem (where the air density is taken as $\rho_+=0$.) For $\ep$ in a neighborhood of $0$, equation \eqref{E:yeqBC} is a quadratic polynomial in $c$ with a complex parameter $\ep y'(0)$ and a real parameter $\ep$, when we fix others like $k$, $h_\pm$, $U_+(0)$, $U_+'(0)$, and so on. The solution $c$ can be expressed analytically in terms of $\ep$ and $\ep y'(0)$ 
\[
c = F\big(\ep y'(0), \ep \big). 
\]
Near $\ep=0$ and $\ep y'(0)=0$, this analytic expression has two branches containing the positive and the negative values of $c_k$ in \eqref{E:ck}, respectively. The quadratic formula clearly implies $F( a, \ep) \in \mathbb{R}$ for small $a\in \mathbb{R}$. Therefore, near $(\ep, \ep y'(0))=(0,0)$, $F$ must take the form 
\be \label{E:ceq1} \begin{split}
c & = F\big(\ep y'(0), \ep\big) \\
& = f_R \big(\ep \realpart{ y'(0) }, \ep \imagpart{y'(0)}, \ep \big) + i \ep \imagpart{y'(0)} f_I  \big(\ep \realpart{y'(0)}, \ep  \imagpart{y'(0)}, \ep\big), \end{split}
\ee
where $f_R$ and $f_I$ are real valued analytic functions satisfying at $(0, 0, 0)$
\be \label{E:ceq2}
f_R  (0) = c_k, \quad \p_1 f_R (0)= f_I(0)= \frac {\big(U_+(0) -c_k\big)^2}{2c_k |k| \tanh{(|k|h_-)}}, \quad \p_2 f_R (0) = 0.
\ee
Here the formula for $f_I(0)$ can be obtained via implicit differentiation.  More detailed information about $f_I$ and $f_R$ can be derived from the quadratic formula if needed. Since we are interested in instabilities, we will seek solutions $(y, c)$ of \eqref{E:yeq1}, \eqref{E:yeq2}, and \eqref{E:yeqBC} with $\imagpart{c} >0$ and $c$ near $c_k$. 
Clearly the key task is to analyze $\imagpart{y'(0)}$, whose dependence on $c$ is quite intricate as it involves solving the Rayleigh equation with a singularity. 

\subsection{Examples}

We first present a few examples where the profile is simple enough to do explicit calculations.  Throughout, $h_\pm=\infty$ 
is assumed for ease of computation. 

\begin{example}[Kelvin--Helmholtz instability] \label{kelvin helmholtz ex} Suppose that $U_+ \equiv U_0$, i.e., the wind velocity is uniform.  Then Rayleigh's equation \eqref{E:yeq1} simplifies to 
\be - y^{\prime\prime} + k^2 y = 0 \textrm{ on } x_2 \in (0, \infty),  \qquad y(0) = 1, \qquad y \to 0 \textrm{ as } x_2 \to \infty.\label{reduced Rayleigh for KH} \ee
This can be solved explicitly.  We find in particular that  
\[ y^\prime(0) = -|k|.\]
Inserting this into \eqref{E:yeqBC} yields the following quadratic equation for $c$:
\[ g(1-\epsilon) + \rho_-^{-1} \sigma k^2 = \epsilon |k| (U_0 - c)^2 + c^2 |k|. \]
Since the coefficients are all real, instability ensues if and only if there are complex roots of this polynomial.  Simply evaluating the discriminant reveals that this will be the case if and only if 
\be 
k^2 U_0^2 \frac{  \rho_+ \rho_-}{(\rho_+ + \rho_-)^2}   > g |k|  \frac{\rho_- - \rho_+}{\rho_+  + \rho_-} + \sigma |k|^3  \frac{1}{\rho_+ + \rho_-}. \label{classic KH} 
\ee
Here we have rearranged terms so that the densities ratios are dimensionless.  This inequality is precisely the Kelvin--Helmholtz instability criterion (cf., e.g., \cite{drazin2004book}). 
\end{example}

\begin{example}[Constant shear without a vortex sheet]  Consider the situation where 
\[ U(x_2) := \left\{ \begin{array}{ll} \mu x_2 & x_2 \geq 0 \\ 0 & x_2 < 0 \end{array} \right., \]
for a fixed $\mu > 0$.  This corresponds to a velocity profile which is continuous over the air--water interface and has a constant (nonzero) shear in the atmosphere.      

As before,  Rayleigh's equation \eqref{E:yeq1} reduces to \eqref{reduced Rayleigh for KH}, and hence $y^\prime(0) = -|k|$.   On the other hand, \eqref{E:yeqBC} becomes   
\[ g (1-\epsilon) + \rho_-^{-1} \sigma k^2 = (1+\epsilon)|k| c^2 - \epsilon \mu c.  \]  
It is completely elementary to show that the above quadratic equation has only real roots when $0 < \epsilon \leq 1$.  We conclude that, for any choice of $\mu$, the corresponding wave is linearly stable.  This is in accordance with Miles's prediction, and our own Theorem \ref{T:Instability}, because $U^{\prime\prime}$ vanishes identically.\end{example}

\begin{example}[Constant shear with a vortex sheet] Building on the previous example, let us now take $U_+$ to be of the form 
\[ U_+(x_2) := U_0 + \mu x_2,\]
for some $U_0 \geq 0$ and $\mu$.  The Rayleigh equation for $y$ is trivial to solve explicitly and we find once more that $y^\prime(0) = -|k|$.  Thus the dispersion relation becomes 
\[ g(1-\epsilon) + \frac{1}{\rho_-}\sigma k^2 = \epsilon (U_0 - c)^2 |k| + \epsilon \mu (U_0 - c) + c^2 |k|.\]
Evaluating the discriminant, we infer that $U$ is unstable if and only if the following inequality is satisfied 
\[ \frac{\epsilon}{(1+\epsilon)^2} U_0 (U_0 + \frac{\mu}{|k|}) > \frac{\epsilon^2}{(1+\epsilon)^2} \frac{\mu^2}{4k^2} + \frac{g}{|k|} \frac{1-\epsilon}{1+\epsilon} + \frac{\sigma |k|}{\rho_-(1+\epsilon)}.\]
Rewriting this in terms of $\rho_\pm$, we get:
\[ k^2 \frac{\rho_+ \rho_-}{(\rho_+ + \rho_-)^2} U_0 ( U_0 + \frac{\mu}{|k|}) > \frac{\rho_+^2}{(\rho_+ + \rho_-)^2} \frac{\mu^2}{4} + g |k| \frac{ \rho_- - \rho_+}{\rho_+ + \rho_-} + \sigma |k|^3 \frac{1}{\rho_+ + \rho_-}.\]
Comparing this to \eqref{classic KH} reveals that vorticity in the air region --- even constant vorticity --- can be destabilizing in the sense that it may reduce the minimal value of $U_0$ required for the  wind-generation of water waves.  

\end{example}

\subsection{The necessity of critical layers for instability.} 
Formal calculations indicates that there exists an unstable eigenvalue $-ikc$ with $c$ near $c_k$ provided that $c_k$ belongs to the range of $U_+$ on $[0, h_+]$, see for example \cite{miles1957windwaves1,janssen2004interaction}. In the following lemma, we prove that this is a necessary condition for the existence of instability near $c_k$. Here, for simplicity, we only consider the case of finite atmosphere $h_+ < \infty$.  

\begin{lemma} \label{L:necessary} 
Suppose $U_+\in C^2$ and $c_k \notin U_+([0, h_+])$, then there exists $\ep_k >0$ such that, if $(y, c)$ solve \eqref{E:yeq1} along with \eqref{E:yeq2} and \eqref{E:yeqBC} for $\ep \in (0, \ep_k)$, and 
\[ |c-c_k| \leq \frac{1}{4} \min_{x_2 \in [0, h_+]} |c_k - U_+(x_2)|,\]
then $c \in \mathbb{R}$.  
\end{lemma}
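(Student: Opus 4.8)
\emph{Strategy.} Set $d_k := \min_{x_2\in[0,h_+]}|c_k-U_+(x_2)|$, which is positive precisely because $c_k\notin U_+([0,h_+])$. If $|c-c_k|\le\tfrac14 d_k$ then $|U_+(x_2)-c|\ge\tfrac34 d_k$ for all $x_2\in[0,h_+]$, so Rayleigh's equation \eqref{E:yeq1} is a \emph{regular} ODE: its coefficient $U_+''/(U_+-c)+k^2$ is bounded on $[0,h_+]$ by a constant depending only on $k$, $d_k$ and $\|U_+''\|_{C^0[0,h_+]}$ --- in particular independent of $\ep$. The plan is to use this regularity to show that $\imagpart{y'(0)}$ is linear in $\imagpart{c}$ with a bounded coefficient, substitute into the dispersion relation \eqref{E:yeqBC}, and arrive at an identity of the form $\imagpart{c}\,\bigl(2|k|\tanh(|k|h_-)\realpart{c}+O(\ep)\bigr)=0$; since $\realpart{c}$ is bounded away from $0$ for any such solution, this forces $\imagpart{c}=0$ once $\ep$ is small.

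\emph{Energy identity.} Multiplying \eqref{E:yeq1} by $\bar y$, integrating over $(0,h_+)$ and integrating by parts using $y(h_+)=0$ and the normalization $y(0)=1$ from \eqref{E:yeq2} gives
\[ y'(0)=-\int_0^{h_+}\!\bigl(|y'|^2+k^2|y|^2\bigr)\,dx_2-\int_0^{h_+}\!\frac{U_+''}{U_+-c}\,|y|^2\,dx_2 . \]
Since $\imagpart{(U_+-c)^{-1}}=\imagpart{c}\,|U_+-c|^{-2}$, the imaginary part of this identity reads
\[ \imagpart{y'(0)}=-\imagpart{c}\;\mathcal I,\qquad \mathcal I:=\int_0^{h_+}\frac{U_+''}{|U_+-c|^2}\,|y|^2\,dx_2\in\R , \]
while its real part expresses $\realpart{y'(0)}$ through $\|y\|_{H^1(0,h_+)}^2$ and $\mathcal I$.

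\emph{Insertion into the dispersion relation.} The left side of \eqref{E:yeqBC} is real. Taking the imaginary part of \eqref{E:yeqBC}, using $\imagpart{(U_+(0)-c)^2}=-2(U_+(0)-\realpart{c})\imagpart{c}$ and the formula for $\imagpart{y'(0)}$, one sees that every term carries a factor $\imagpart{c}$; collecting it yields
\[ \imagpart{c}\cdot\Bigl(2|k|\tanh(|k|h_-)\realpart{c}+\widetilde E+\ep\bigl[\,(\cdots)\mathcal I+(\cdots)\realpart{y'(0)}-U_+'(0)\,\bigr]\Bigr)=0, \]
where $\widetilde E:=\ep|k|U_+(0)(1-\tanh^2(|k|h_-))/(\tanh(|k|h_+)+\ep\tanh(|k|h_-))=O(\ep)$ is real and the coefficients $(\cdots)$ are polynomials in $\realpart{c},\imagpart{c},U_+(0)$, hence bounded for $|c-c_k|\le\tfrac14 d_k$. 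The real part of \eqref{E:yeqBC}, together with \eqref{E:ck}, gives $(\realpart{c})^2-(\imagpart{c})^2=c_k^2+\ep\,(\cdots)/(|k|\tanh(|k|h_-))$ with the same convention; so, \emph{provided $\mathcal I$ and $\realpart{y'(0)}$ are $O(1)$}, for $\ep$ small we get $(\realpart{c})^2\ge c_k^2-O(\ep)\ge\tfrac12 c_k^2$, i.e.\ $|\realpart{c}|\ge c_k/\sqrt2>0$. Assuming $\imagpart{c}\ne0$ and dividing the displayed identity by it, the left factor equals $2|k|\tanh(|k|h_-)\realpart{c}+O(\ep)$, which is bounded below by a positive constant for $\ep<\ep_k$ small --- a contradiction. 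Hence $\imagpart{c}=0$.

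\emph{The main obstacle.} Everything reduces to an a priori bound on $\mathcal I$ and $\realpart{y'(0)}$ --- equivalently on $\|y\|_{H^1(0,h_+)}$ --- uniform over the admissible set of $(c,\ep)$; since \eqref{E:yeq1}--\eqref{E:yeq2} do not involve $\ep$, such a bound is automatically $\ep$-independent. For a fixed \emph{non-resonant} $c$ (one for which the homogeneous Dirichlet problem for \eqref{E:yeq1} has only the trivial solution) this is the standard estimate for a regular two-point boundary value problem: write $y$ as the linear interpolant of its boundary values plus a correction recovered through the Green's function of $-\partial_{x_2}^2+k^2$, and iterate over a partition of $[0,h_+]$ fine enough to absorb the (bounded) coefficient. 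The delicate point is \emph{uniformity} near resonant values of $c$, where $\|y\|$ can diverge, and here I would argue: (i)~any $(y,c)$ solving \eqref{E:yeq1}--\eqref{E:yeq2} has $c$ non-resonant, since a nontrivial solution of \eqref{E:yeq1} with $y(h_+)=0$ is unique up to a scalar multiple and can meet $y(0)=1$ only if it does not vanish at $x_2=0$; (ii)~by Howard's semicircle theorem for the channel Rayleigh problem on $(0,h_+)$ --- which follows from the substitution $y=(U_+-c)g$, the resulting identity $\int_0^{h_+}(U_+-c)^2(|g'|^2+k^2|g|^2)\,dx_2=0$, and the sign of $(U_+-\min U_+)(U_+-\max U_+)$ --- every \emph{complex} resonant value lies in the closed disk with diameter $[\min U_+,\max U_+]$, which is at distance $d_k$ from $c_k$ and hence disjoint from $\{|c-c_k|\le\tfrac14 d_k\}$; (iii)~consequently the only resonant values inside the admissible disk are real, and a local (Rouch\'e-type) analysis of \eqref{E:yeqBC} near those finitely many points shows the solutions $c$ clustering there as $\ep\to0$ are themselves real. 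Making this uniform bound interlock with Howard's theorem is the step I expect to require the most care.
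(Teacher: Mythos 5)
Your skeleton is the same as the paper's: the identity $\imagpart{y'(0)}=-\imagpart{c}\,\mathcal I$ with $\mathcal I=\int_0^{h_+}U_+''|y|^2/|U_+-c|^2\,dx_2$ (the paper obtains it by integrating the Wronskian ODE $\CW'=U_+''\imagpart{c}\,|y|^2/|U_+-c|^2$, which is the same computation as your energy identity), followed by insertion into the dispersion relation to extract a factor of $\imagpart{c}$ with a coefficient of the form $O(1)+\ep\cdot(\text{stuff})$. But the step you yourself flag as ``the main obstacle'' --- a uniform $L^2$ or $H^1$ bound on $y$ over the whole admissible set of $(c,\ep)$ --- is a genuine gap, and your proposed route does not close it. Your point (i) only shows that solutions of \eqref{E:yeq1}--\eqref{E:yeq2} fail to exist \emph{at} resonant $c$; it gives no control as $c$ approaches a resonant value, where $\|y\|$ diverges. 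Howard's semicircle excludes only \emph{complex} resonances from the disk $|c-c_k|\le\tfrac14 d_k$; real Dirichlet resonances of the (regular, since $c_k\notin U_+([0,h_+])$) Rayleigh problem can a priori sit inside that disk, and near them $\mathcal I$ and $\realpart{y'(0)}$ can be of size $1/\ep$ or worse, destroying the $O(\ep)$ smallness your contradiction needs. Step (iii), the ``Rouch\'e-type analysis'' meant to show that solutions clustering at such points are real, is exactly the assertion to be proved and is not substantiated.

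The paper closes this gap with one substitution that you almost reach: it writes $y=(U_+-c)\psi+\tfrac{h_+-x_2}{h_+}$ (the Howard substitution applied to the \emph{inhomogeneous} problem, the linear tail absorbing the boundary data $y(0)=1$, $y(h_+)=0$), multiplies the resulting divergence-form equation by $\bar\psi$, and takes the real part. Since $|c-c_k|\le\tfrac14\delta$ forces $\realpart{(U_+-c)^2}\ge\delta^2/2$ pointwise, the quadratic form $\int(U_+-c)^2(k^{-2}|\psi'|^2+|\psi|^2)$ is coercive, yielding $k^{-1}|y'|_{L^2}+|y|_{L^2}\le C\delta^{-2}$ for \emph{every} solution in the disk --- no resonance analysis needed (indeed, the same coercivity applied to the homogeneous problem shows the disk contains no resonant values at all, real or complex, which is why your worry in (ii)--(iii) evaporates). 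With that bound, $|\imagpart{y'(0)}|\le C\delta^{-6}|\imagpart{c}|$, and the paper concludes via the abstract form \eqref{E:ceq1} of the dispersion relation, $\imagpart{c}=\ep\,\imagpart{y'(0)}\,f_I$, giving $|\imagpart{c}|\le C\delta^{-6}\ep|\imagpart{c}|$ and hence $\imagpart{c}=0$ for small $\ep$; your direct manipulation of \eqref{E:yeqBC} would work equally well once the a priori bound is in hand. To repair your proof, replace your energy identity (whose indefinite term $\int U_+''|y|^2/(U_+-c)$ prevents it from closing) by the paper's $\psi$-substitution.
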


\begin{proof}
Let $\psi = \psi(x_2)$ be defined by
\[
y(x_2) =: \big(U_+ (x_2) -c\big) \psi(x_2) + \frac {h_+-x_2}{h_+},
\]
and denote
\[
\delta := \min_{x_2 \in [0, h_+]} |c_k - U_+(x_2)|. 
\]
Then \eqref{E:yeq1} implies 
\[
\left\{ \begin{array}{l} -\big((U_+-c)^2 \psi'\big)' + k^2 (U_+-c)^2 \psi + \big(U_+'' + k^2 (U_+ -c)\big)  \dfrac {h_+-x_2}{h_+}=0, \\ \psi(0)=\psi(h_+)=0.\end{array}\right.
\]
Multiplying the above equation by $\bar \psi$ and integrating on $[0,h_+]$, we obtain 
\[
\int_0^{h_+} (U_+-c)^2 (k^{-2} |\psi'|^2 + |\psi|^2) \, dx_2 = \int_0^{h_+} \big( \frac{U_+''}{k^2(U_+-c)} +1\big) \frac {h_+-x_2}{h_+}(U_+-c) \bar \psi \, dx_2.
\]
Even though $c$ may not be real, $|c-c_k|\le \delta/4$ implies $\realpart{(U_+-c)^2} \ge   {\delta^2}/2$. Taking the real part of the above equality, we have 
\[
k^{-1} |\psi'|_{L^2} + |\psi|_{L^2} \le C\delta^{-2},
\]
which implies that
\[ k^{-1} |y'|_{L^2} + |y|_{L^2} \le C\delta^{-2},\]
where $C$ is a constant independent of $\delta$, $\ep$, and $c$. 

Let 
\be \label{E:Wronski1}
\CW := -\frac i2 (\bar y y' - y \bar y')\in \mathbb{R}.
\ee
One may compute 
\be \label{E:Wronski2}
\CW' = \frac {U_+''\imagpart{c}}{|U_+ - c|^2} |y|^2, \qquad \CW(0)=\imagpart{y'(0)},   \quad \CW(h_+)=0.
\ee
Along with the above estimates on $y$, this implies 
\[
|\imagpart{y'(0)}| \le C\delta^{-6} |\imagpart{c}|.  
\]
From \eqref{E:ceq1}, we obtain $|\imagpart{c}| \le C\delta^{-6} \ep |\imagpart{c}|$ and thus $\imagpart{c}=0$. 
\end{proof}

\begin{remark} 
We emphasize that this result, under the $C^2$ assumption on $U_+$, means unstable eigenvalues can only bifurcate out of the imaginary axis from $\pm ikc_k$ in the range of $U$.  If $U_+ \notin C^2$, as our constructions later show, it is entirely possible that there are unstable eigenvalues lurking elsewhere (cf. Section \ref{pathological example section}).

The calculation of the Wronskian \eqref{E:Wronski1} and \eqref{E:Wronski2} will play an important role in the next section where we provide a sufficient condition for instability. 
\end{remark}

\section{Instability induced by critical layers} \label{S:Instability} 

In this section, we present a sufficient condition for linear instability related to critical layers.     We do this by seeking a solution $(y, c)$ of \eqref{E:yeq1} along with \eqref{E:yeq2} and \eqref{E:yeqBC} with $|c-c_k| \ll 1$ and $\imagpart{c}>0$, for $\ep :=  \rho_+/\rho_- \ll 1$. Here $c_k$ is given in \eqref{E:ck} and we are assuming $h_+ < \infty$.  Our main result is the following.  

\begin{theorem} \label{T:Instability}
Assume $U_+ \in C^4$, $h_+<\infty$, and $c_k\in \mathbb{R}$ satisfies  
\[
\emptyset \ne \{ s \in [0, h_+] \mid U_+(s) = c_k\} = \{s_1, \ldots, s_m\} \subset (0, h_+), 
\]
and 
\[
U_+'(s_j) \ne 0, \; c_k U_+''(s_j) \le 0, \; \forall 1\le j \le m, \; \text{ and } \; c_k U_+''(s_j) <0 \text{ for } j=m-1 \text{ or } m. 
\]
For $\ep = \rho_+/\rho_-  \ll 1$, there exists a solution $(y, c)$  of \eqref{E:yeq1} along with \eqref{E:yeq2} and \eqref{E:yeqBC} with $|c-c_k| = O(\ep)$ and $\imagpart{c} >0$ with a positive lower bound   of order $O(\ep)$.
\end{theorem}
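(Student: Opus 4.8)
\emph{Plan of proof.} I would assume without loss of generality that $c_k>0$ (the case $c_k<0$ follows from the symmetry $(U_+,c)\mapsto(-U_+,-c)$, under which \eqref{E:yeq1}, \eqref{E:yeqBC}, and all the hypotheses are invariant), and then reduce the search for an unstable eigenvalue, via Proposition~\ref{P:evalue}, to a scalar fixed-point equation for $c$ in an $O(\ep)$ complex neighbourhood of $c_k$. Concretely, for $c$ in a small neighbourhood $\mathcal U$ of $c_k$ in $\{\imagpart c\ge0\}$, Proposition~\ref{P:Rayleigh} supplies a solution $y=y(\cdot;c)$ of \eqref{E:yeq1} normalized by $y(0)=1$, depending continuously on $c$ right up to $\imagpart c=0$ and remaining bounded at each critical layer $s_j$ (where it acquires at worst a logarithmic singularity). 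Before this, I would check a non-degeneracy condition — that the homogeneous Dirichlet Rayleigh problem at $c_k$ has only the trivial solution, so that the normalization $y(0)=1$ is admissible — which is where the sign hypothesis $c_kU_+''(s_j)\le 0$ and the limiting behaviour of the singular term $(U_+-c)^{-1}$ as $\imagpart c\to 0^+$ enter. With $y(\cdot;c)$ in hand, Proposition~\ref{P:evalue} together with \eqref{E:ceq1}--\eqref{E:ceq2} shows that $-ikc$ is an eigenvalue precisely when
\[
c \;=\; G(c)\;:=\; F\big(\ep\,y'(0;c),\,\ep\big),
\]
so it remains to produce a fixed point with $\imagpart c>0$.

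The crux is the sign of $\imagpart{y'(0;c)}$ as $c\to c_k$ from the upper half-plane, and here I would exploit the Wronskian identity \eqref{E:Wronski1}--\eqref{E:Wronski2}. Since $y(0)=1$ and $y(h_+)=0$, the function $\CW$ satisfies $\CW(0)=\imagpart{y'(0)}$, $\CW(h_+)=0$, and $\CW'=\frac{U_+''\,\imagpart c}{|U_+-c|^2}|y|^2$, so
\[
\imagpart{y'(0;c)}\;=\;-\int_0^{h_+}\frac{U_+''(x_2)\,\imagpart c}{|U_+(x_2)-c|^2}\,|y(x_2;c)|^2\,dx_2 .
\]
As $c\to c_k$ with $\imagpart c\to 0^+$, the kernel $\imagpart c/|U_+-c|^2$ concentrates at the critical layers, converging to $\pi\sum_j|U_+'(s_j)|^{-1}\delta(x_2-s_j)$ when tested against the continuous function $U_+''|y|^2$ (using the uniform control near each $s_j$ supplied by Proposition~\ref{P:Rayleigh}); hence
\[
\imagpart{y'(0;c)}\;\longrightarrow\;L\;:=\;-\pi\sum_{j=1}^m\frac{U_+''(s_j)}{|U_+'(s_j)|}\,\big|y(s_j;c_k)\big|^2 .
\]
Under $c_kU_+''(s_j)\le0$ with $c_k>0$, every summand is $\ge0$, and the one with $j=m$ or $m-1$ is strictly positive because there $c_kU_+''(s_j)<0$ and $y(s_j;c_k)\ne0$ at such a good critical layer (again from Proposition~\ref{P:Rayleigh}); thus $L>0$. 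Recalling from \eqref{E:ceq2} that $\p_1f_R(0)=f_I(0)=(U_+(0)-c_k)^2/(2c_k|k|\tanh(|k|h_-))>0$ and $\p_2f_R(0)=0$, a first-order expansion of $G$ gives, for $c\in\mathcal U$ with $|c-c_k|=O(\ep)$,
\[
\realpart{G(c)}-c_k=O(\ep),\qquad \imagpart{G(c)}=\ep\big(f_I(0)+O(\ep)\big)\imagpart{y'(0;c)}=\ep f_I(0)L+o(\ep),
\]
using the boundedness and the continuity up to $\imagpart c=0$ of $y'(0;\cdot)$.

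To close the argument I would invoke Brouwer's fixed-point theorem. For $M$ large set $D:=\{c:|c-c_k|\le M\ep,\ \imagpart c\ge0\}$, a compact convex subset of $\mathbb C\cong\mathbb R^2$ on which $G$ is continuous. The estimates above give $|G(c)-c_k|\le C\ep\le M\ep$ once $M\ge C$, and $\imagpart{G(c)}\ge\tfrac12\ep f_I(0)L>0$ for $\ep$ small, so $G(D)\subseteq D$ and there is $c^\ast\in D$ with $G(c^\ast)=c^\ast$. Since $\imagpart{c^\ast}=\imagpart{G(c^\ast)}\ge\tfrac12\ep f_I(0)L>0$, the point $c^\ast$ lies in the open upper half-plane where $y(\cdot;c^\ast)$ is a genuine solution of the Rayleigh equation, and $(y(\cdot;c^\ast),c^\ast)$ therefore solves \eqref{E:yeq1}, \eqref{E:yeq2}, \eqref{E:yeqBC} with $|c^\ast-c_k|\le M\ep=O(\ep)$ and $\imagpart{c^\ast}$ bounded below by a positive constant of order $O(\ep)$, as asserted.

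The hard part will be Proposition~\ref{P:Rayleigh} itself: everything here rests on controlling the Rayleigh solution $y(\cdot;c)$ and its derivative $y'(0;c)$ \emph{uniformly} as $c$ descends to the real axis through the region where the equation degenerates — the continuous extension to $\imagpart c=0$, the concentration limit producing $L$, and the non-vanishing $y(s_j;c_k)\ne0$ at a good critical layer — all under only $U_+\in C^4$, which rules out the classical analytic asymptotics and forces a dynamical-systems treatment near the moving singularity. The remaining points — the non-degeneracy of the homogeneous Dirichlet Rayleigh problem at $c_k$, and the bookkeeping of the $O(\ep)$ and $o(\ep)$ errors in $G$ — I expect to be comparatively routine.
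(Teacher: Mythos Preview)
Your plan follows essentially the same route as the paper: reduce to the scalar equation $c=F(\ep y'(0;c),\ep)$, extract $\imagpart{y'(0;c)}$ via the Wronskian identity \eqref{E:Wronski1}--\eqref{E:Wronski2}, pass to the limit $c\to c_k$ from the upper half-plane to obtain the sum $L=-\pi\sum_j U_+''(s_j)|y(s_j;c_k)|^2/|U_+'(s_j)|$, conclude $L>0$ from the sign hypothesis, and close with a topological argument. The paper uses a degree argument on a rectangle in the $(\tilde c_1,\tilde c_2)$ coordinates where $c=c_k+\tilde c_1+i\ep(c_\#+\tilde c_2)$; your Brouwer fixed-point on a half-disk in $c$ is an equivalent packaging.

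One caution: you attribute the nonvanishing $y(s_j;c_k)\ne 0$ at a ``good'' critical layer to Proposition~\ref{P:Rayleigh}, and you label the non-degeneracy of the homogeneous Dirichlet Rayleigh problem at the \emph{real} value $c_k$ as ``comparatively routine.'' Neither is contained in Proposition~\ref{P:Rayleigh}, and neither is routine once $c$ is real and the equation is genuinely singular (the easy integration-by-parts argument only works for $\imagpart c\ne 0$). In the paper both facts are established in Lemma~\ref{L:CL4} via the auxiliary monotone quantity $H(x_2)=u_{*2}-\tfrac{U_+'}{U_+-c_R}u_{*1}$ applied to the limiting system \eqref{E:CL14}--\eqref{E:CL15}: one shows $H'\ge 0$ and $H(h_+)=0$, hence $H(s_m'{}^+)<0$, which forces $u_{*1}(s_m')>0$; the jump of $\CW_*$ at $s_m'$ then forces $u_{*1}(s_{m-1}')>0$ when $m\ge2$, and in turn $u_{*1}(0)>0$ (non-degeneracy). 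You will need this argument, or something like it, to justify the two ingredients you flagged; Proposition~\ref{P:Rayleigh} alone does not supply them.
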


\begin{remark} Here $c_k$ may take either its positive or negative value, whichever satisfies the above assumptions. A weaker hypothesis is given later in \eqref{E:CLA3} and the leading order form of $\imagpart{c}$ can be found at the end of Subsection \ref{SS:Instability}.
\end{remark}

Before we give the rigorous argument, it is interesting to outline the  heuristic calculation of $\imagpart{y'(0)}$ which is the key in obtaining the instability due to \eqref{E:ceq1}. The essence of this calculation can be found, for example, in \cite{janssen2004interaction}.  

Assume $c_*\in \mathbb{R}$ satisfies  
\begin{equation} \label{E:CLA1}
\{ s \in [0, h_+] \mid U_+(s) = c_*\} = \{s_1, \ldots, s_m\} \subset (0, h_+), \; U_+'(s_j) \ne 0, \quad j=1, \ldots, m \ge 1
\end{equation}
and $(y, c)$ solves \eqref{E:yeq1} 
with $|c-c_*| \ll 1$, $0<|\imagpart{c}| \ll 1$, and $|y|^2$ reasonably regular. Let 
\[ \{s_1', \ldots, s_m'\} := U_+^{-1} (\{\realpart{c}\}),\]
where $s_j'$ is close to $s_j$. By integrating \eqref{E:Wronski2}, we first obtain 
\begin{align*}
\imagpart{y'(0)} &=\CW (0) \approx - \sgn{(\imagpart{c})}\pi \sum_{j=1}^m \frac {U_+''(s_j') |y(s_j')|^2}{|U_+'(s_j')|} \\
&\approx  - \sgn{(\imagpart{c})} \pi \sum_{j=1}^m \frac {U_+''(s_j) |y(s_j)|^2}{|U_+'(s_j)|},
\end{align*}
where the discrete summation resulted from the limit $\delta$-masses produced by the singularity of the integrand at the critical layers. 

From \eqref{E:ceq1} we find 
\[
\imagpart{c} \approx -\sgn{(\imagpart{c})} \left[ \ep \pi \frac {\big(U_+(0) -c_k\big)^2}{2c_k |k| \tanh{(|k|h_-)}}\sum_{j=1}^m \frac {U_+''(s_j) |y(s_j)|^2}{|U_+'(s_j)|} \right]. 
\]
Therefore, instability occurs if the above square bracketed term is negative. We will make existence of critical layer induced instability rigorous in this section. The crucial part is the analysis near the coefficient singularity of the Rayleigh equation \eqref{E:yeq1}.\\

\noindent {\bf Rayleigh equation} In the process of proving Theorem \ref{T:Instability}, we obtain the following proposition on the convergence of solutions to the Rayleigh equation \eqref{E:yeq1} as the parameter $c$ approaches a limit in $\mathbb{R}$. This result can be useful by itself in the study of instability of shear flows. 

\begin{proposition} \label{P:Rayleigh}
Suppose $c_*\in \mathbb{R}$ is a regular value of $U_+ \in C^l$, $l \ge 4$, on $[0, h_+]$ and $c_* \notin\{ U_+(0), \ U_+(h_+)\}$. We use the notation as in \eqref{E:CLA1}. For $c = c_* + i c_I \in \mathbb{C}\backslash \mathbb{R}$ sufficiently close to $c_*$, let $y$ be the solution of \eqref{E:yeq1} along with $y'(h_+)=1$. We have 
\begin{enumerate} 
\item There exists a unique solution $y_*$ of 
\[
- y_*'' + (\frac {U_+''}{U_+-c} + k^2)y_* =0 \text{ on } x_2 \notin (0, h_+) \cap U_+^{-1} (\{c_*\}), \qquad y_*(h_+)=0, \quad y_*'(h_+) =1
\]
that, at any $s \in U_+^{-1} (\{c_*\})$, exhibits the behavior 
\be \label{E:matching}
y_*(s) = y_*(s\pm) \text{ and } \lim_{x_2 \to 0+} y_*'(s+x_2) - y_*'(s-x_2) = i \sgn(c_I) \frac {\pi U_+''(s)}{|U_+'(s)|} y_*(s). 
\ee
\item For any $s \in U_+^{-1} (\{c_*\})$,
\[
|y_*' (s+ x_2)| = O(|\log |x_2||) \text{ as } x_2 \to 0.
\]
Moreover $y_*$ is $C^{l-3}$ in $c_*$ for $x_2 \notin  U_+^{-1} (\{c_*\})$.
\item For any $\delta >0$ and $\alpha \in (0, 1)$, 
\[
|y(x_2) - y_*(x_2)| = O(|c_I|^\alpha), \quad \textrm{for all }
x_2 \in [0, h_+] \setminus \bigcup_{j=1}^m (s_j-\delta, s_j+ 
\delta).
\]
The above estimates are uniform in $k$ for $k$ on any compact subset of $\mathbb{R}^+$. 
\end{enumerate}
\end{proposition}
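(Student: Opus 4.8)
The plan is to localize the analysis to small neighborhoods of the critical points, reduce everything to a careful study of the Rayleigh equation near a single regular critical point, and then glue the local pieces together. Since $c\notin\R$, the coefficient $\frac{U_+''}{U_+-c}$ in \eqref{E:yeq1} is of class $C^{l-2}$ on all of $[0,h_+]$, so the solution $y$ with $y(h_+)=0$, $y'(h_+)=1$ is globally defined; and since $c_*$ is a regular value of $U_+\in C^l$ on the compact interval $[0,h_+]$ with $c_*\notin\{U_+(0),U_+(h_+)\}$, the set $U_+^{-1}(\{c_*\})=\{s_1,\dots,s_m\}$ is finite and contained in $(0,h_+)$, while by the implicit function theorem the perturbed set $U_+^{-1}(\{\realpart{c}\})$ consists of exactly one point near each $s_j$ for $c$ close to $c_*$. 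Fix a small $\delta>0$, put $I_j:=(s_j-\delta,s_j+\delta)$, and decompose $[0,h_+]$ into the $I_j$ and the complementary closed intervals. On a complementary interval the coefficient of \eqref{E:yeq1} and its first $l-2$ derivatives converge uniformly, at rate $O(|c_I|)$ with $c_I:=\imagpart{c}$, to those of the limiting equation; hence the associated fundamental matrices converge at rate $O(|c_I|)$, with constants depending only on $\delta$, $\|U_+\|_{C^l}$, and an upper bound for $k$. Everything therefore hinges on the local behavior near a single $s:=s_j$.

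\emph{Local analysis near $s$.} On $I_j$ the substitution $y=(U_+-c)\psi$ turns \eqref{E:yeq1} into the divergence-form equation $\big((U_+-c)^2\psi'\big)'=k^2(U_+-c)^2\psi$, whose weight vanishes (to exactly second order, since $U_+'(s)\ne0$) at $s$ in the limit $c_I\to0$. We produce two solutions of \eqref{E:yeq1} on $I_j$. First, a \emph{regular} solution $y_1^c=(U_+-c)\psi_1^c$, where $\psi_1^c$ is obtained by a contraction argument in weighted norms and is bounded in $C^1(I_j)$ uniformly in $c_I$, with $\psi_1^c\to\psi_1^*$ in $C^1(I_j)$ at rate $O(|c_I|)$, $\psi_1^*$ being the smooth solution of the limiting degenerate equation; up to normalization this gives $y_1^c\to y_1^*$ in $C^1(I_j)$ with $y_1^*$ of class $C^{l-1}$ and $y_1^*(s)=0$. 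Second, a \emph{logarithmic} solution obtained by reduction of order, $y_2^c(x_2):=y_1^c(x_2)\int_{x_0}^{x_2}\big(y_1^c(t)\big)^{-2}\,dt$ for a fixed base point $x_0\in I_j\setminus\{s\}$; this is well defined since $y_1^c$ has no real zeros when $c\notin\R$. Although $(y_1^c)^{-2}$ is of size $O(|c_I|^{-2})$ in the layer $|x_2-s|\lesssim|c_I|$, the contributions from the two sides of $s$ largely cancel, so that $y_2^c$ stays bounded on $I_j$ and $(y_2^c)'$ is only $O(\log(1/|c_I|))$ there — indeed the potential $1/(x_2-s)$ singularity in $(y_2^c)'=\big(1+(y_1^c)'y_2^c\big)/y_1^c$ cancels structurally by the constant-Wronskian identity. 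Expanding $(y_1^c)^{-2}$ about $s$ isolates in $\int_{x_0}^{x_2}(y_1^c)^{-2}$ a term proportional to $\log\big(U_+(x_2)-c\big)$ with coefficient $\propto U_+''(s)/U_+'(s)$; passing to the limit $c_I\to0$ in the remaining, milder integrals by a Sokhotski--Plemelj/dominated-convergence argument (valid since $U_+''\in C^1$) gives $y_2^c\to y_2^*$ on $I_j\setminus\{s\}$ at rate $O(|c_I|\log(1/|c_I|))$. From the resulting form one reads off that $y_2^*$ is continuous at $s$, that $(y_2^*)'(s+x_2)=O(|\log|x_2||)$, and that $(y_2^*)'$ jumps at $s$ by $i\,\sgn{c_I}\,\pi U_+''(s)/|U_+'(s)|$ times $y_2^*(s)$ — because $\arg\big(U_+(x_2)-c\big)$ jumps by $\pi\,\sgn{U_+'(s)}\,\sgn{c_I}$ as $x_2$ increases through $s$ — which is \eqref{E:matching}. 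For $c_I$ small, $\{y_1^c,y_2^c\}$ is a basis of solutions on $I_j$, and the connection matrices relating its data at $s-\delta$ and $s+\delta$ converge at rate $O(|c_I|\log(1/|c_I|))$, uniformly in $k$ on compacts.

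\emph{Assembly and the three conclusions.} Expressing the global solution $y$ through the regular fundamental matrices on the complementary intervals and through the local bases $\{y_1^c,y_2^c\}$ on each $I_j$, and concatenating, produces a limiting function $y_*$ on $[0,h_+]$ that solves \eqref{E:yeq1} off $\{s_1,\dots,s_m\}$, satisfies $y_*(h_+)=0$, $y_*'(h_+)=1$, and obeys the jump relations \eqref{E:matching}; uniqueness is immediate, as one integrates the equation together with the matching conditions step by step starting from $x_2=h_+$. This is conclusion (1). Conclusion (2) is contained in the local analysis — the logarithmic bound on $(y_2^*)'$ and the smoothness of $y_1^*$ — while the $C^{l-3}$-dependence on $c_*$ follows by differentiating the construction in $c_*$: the critical points $s_j(c_*)$ are of class $C^l$ by the implicit function theorem, $\psi_1^*$ and the reduction-of-order integral depend on $c_*$ with a loss of derivatives at the degenerate point, and composing with the moving, singular point via the chain rule costs further derivatives, which a careful accounting reduces to the stated $C^{l-3}$. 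Conclusion (3) is the combination of the $O(|c_I|)$ convergence on the complementary intervals with the $O(|c_I|\log(1/|c_I|))=O(|c_I|^\alpha)$ convergence near the critical points; the uniformity in $k$ is inherited from that of all the Gr\"onwall and contraction estimates, whose constants depend only on $\|U_+\|_{C^l}$, $\delta$, and $\sup k$.

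\emph{Main obstacle.} The crux is the local step near $s$: making the formal Tollmien/Frobenius construction of the logarithmic solution rigorous under the sole hypothesis $U_+\in C^l$ with $l\ge4$, while simultaneously controlling the limit $c_I\to0$ uniformly in $k$. The delicate point is the passage to the limit in the singular integrals produced when the solution is carried across the critical layer: the optimal rate is $O(|c_I|\log(1/|c_I|))$ — which is exactly why one cannot take $\alpha=1$ — and obtaining it requires a Sokhotski--Plemelj estimate with a $C^1$ density, i.e. $U_+''\in C^1$; the one further derivative available in the $C^4$ hypothesis provides the room needed to propagate these estimates through the Volterra iteration solving the equation across the layer and to establish the $c_*$-regularity in conclusion (2). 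Everything else — the complementary intervals, the glueing, the uniqueness of $y_*$ — is routine once this local picture is in hand.
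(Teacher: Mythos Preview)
Your approach is sound in outline but takes a genuinely different route from the paper. You follow the classical Tollmien/Frobenius strategy: near each critical point you construct a regular solution $y_1^c=(U_+-c)\psi_1^c$ and a logarithmic solution $y_2^c$ by reduction of order, then pass to the limit $c_I\to 0$ using the jump of $\log(U_+-c)$ across the critical layer. The paper instead passes to the real quadratic variables $u_1=|y|^2$, $u_2=\realpart(\bar y y')$, $u_3=|y'|^2$, $\CW=\imagpart(\bar y y')$, which satisfy a closed first-order system \eqref{E:CL2}. After the change of independent variable $\tau=U_+(x_2)-c_R$ this system has the form $u_\tau=\tau^{-1}A(\tau)u+\cdots$, and the key local step (Lemma~\ref{L:CL2}) is a dynamical-systems argument: the desired conjugacy $V=B(\tau)\tilde V$ reducing the limiting equation to the nilpotent model $\tilde V_\tau=\tau^{-1}A(0)\tilde V$ is obtained as the one-dimensional unstable manifold of a fixed point in an augmented $(B,\tau)$-system, yielding a matrix $B\in C^{l-2}$ with $B(0)=I$. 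The model can be solved explicitly via $e^{(\log|\tau|)A(0)}$, from which the logarithmic growth of $u_3$ and the jump conditions \eqref{E:CL6}---and hence \eqref{E:matching}---drop out directly.

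What each approach buys: yours stays close to the physical literature and produces the two Frobenius solutions individually, but the delicate points you flag---the contraction for $\psi_1^c$ uniformly through the degenerate limit, the cancellation in the reduction-of-order integral, and especially the $c_*$-regularity in (2), where tracking the moving singular point via the chain rule costs derivatives---all require substantial bookkeeping that your sketch postpones. The paper's approach avoids ever integrating through the singular point in the $y$-variable: passing to $(u,\CW)$ factors out the phase of $y$, the singularity becomes a jump in the piecewise-constant component $\CW_*$, and the $C^{l-3}$-dependence on $c_*$ is inherited for free from the smoothness of the unstable manifold in its parameters. In particular, no Sokhotski--Plemelj limit is needed; the convergence is obtained by direct energy-type estimates on the transformed system (Lemmas~\ref{L:CL1} and \ref{L:CL3}). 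Both routes need $U_+\in C^4$ for essentially the same reason, and both yield the rate $O(|c_I|^\alpha)$ for every $\alpha<1$.
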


\subsection*{Outline of the proof of Theorem \ref{T:Instability}} While the above formal argument provides a useful insight, it is far from straightforward how to turn it into a rigorous proof of Theorem \ref{T:Instability}. Among the issues, for example, are that one needs some control on $y$ and $y'$ for $|\imagpart{c}| \ll 1$, including some positive lower bound on $|y|^2$ to ensure the instability. This can be potentially achieved by identifying the limit of $y$ as $\imagpart{c} \to 0$ along with convergence estimates, but this is very nontrivial due to the creation of $\delta$-masses near critical layers in the limiting process of the singular equation \eqref{E:yeq1}. As the rigorous proof of Theorem \ref{T:Instability} presented in this section is rather technical, here we give a brief outline of the analysis near one singularity $s_0 \in (0, h_+)$ where $|U_+(s_0) - \realpart{c}| + |\imagpart{c}| \ll 1$. 
\begin{itemize} 
\item[{\it Step 1.}] As $\imagpart{c}\to 0$, $y$ and $y'$ do not remain bounded uniformly.  Our first step is thus to understand the behavior of $y(x_2)$ for $x_2$ near $s_0$ when $|\imagpart{c}| \ll 1$. We change variables to $\tau := U(x_2) - \realpart{c}$, which is more convenient for the local analysis due to its appearance in the denominator. Meanwhile the unknowns are transformed to the more geometric quantities $(u_1, u_2, u_3, \CW)$, where $u_1$ and $u_3$ represent the squares of the norms of $y$ and $y'$, and $u_2$ and $\CW$ the dot and cross products of $y$ and $y'$. It turns out that $u_1$ remains uniformly H\"older continuous as $\imagpart{c} \to 0$, while $\CW$ develops a jump discontinuity at $\tau =0$, $u_2$ a logarithmic singularity in $\tau$, and $u_3$ a singularity of the order of the square of logarithm. More careful analysis reveals that in the leading orders the singularities are symmetric in $\tau$ near $\tau=0$. \\
\item[{\it Step 2.}] A priori estimates in {\it Step 1} motivate us to make the right guess for the limit equation \eqref{E:CL5} along with conditions \eqref{E:CL6} at the singular point $\tau=0$. Coefficients and solutions of this limit system still possess singularities. To better understand the limit problem, we apply another linear transformation $B(\tau)$ to the unknowns which depends on $\tau$ {\it smoothly} in neighborhood of $\tau=0$. The resulting system has very simple variable coefficients and can be solved explicitly. \\

\item[{\it Step 3.}] After carefully separating the singular parts of the solutions, and with the help of the above linear transformation $B(\tau)$ applied to solutions $y$ of \eqref{E:yeq1} for $|\imagpart{c}| \ll 1$, we will complete the proof of the Proposition \ref{P:Rayleigh} and obtain very good error estimates near one critical layer.  Finally, the proof of Theorem \ref{T:Instability} is carried out by combining these estimates near all of the critical layers.
\end{itemize}

\subsection{Preliminary estimates near a singularity of Rayleigh's equation} 

Assume $U_+ \in C^3([0, h_+])$ and suppose there is an $s_0 \in (0, h_+)$ and $\delta \in (0,1)$ with 
\begin{equation} \label{E:CLA2} 
 U_+'(s_0)\ne 0, \quad  \frac {|U_+'(x_2)|}{|U_+'(s_0)|} \in (\frac 12, 2) \text{ on } [s_0-\delta, s_0+\delta].
\end{equation}
In this section, we analyze the solutions $y$ of \eqref{E:yeqBC}  on the interval $[s_0 -\delta, s_0 + \delta]$ for $c$ very close to $U_+(s_0)$ with $c_I := \imagpart{c} \ne 0$. For such $c$, there exists $s$ such that 
\[
\realpart{c}=: c_R = U_+(s), \qquad \textrm{ and } \qquad |s-s_0|, \ |c_I |\ll \delta. 
\]
In other words, $s$ is the altitude at which the wind profile takes the value $\realpart{c}$.  

Since \eqref{E:yeq1} is invariant under multiplication by a complex scalar, we make the following transformation which better reflects the rotational invariance on the complex plane: let
\begin{equation} \label{E:CL0.5}
u_1 := |y|^2, \quad u_2 := \frac 12 (y' \bar y + \bar y' y), \quad u_3 := |y'|^2, \quad \CW := - \frac i2 (\bar y y' - y \bar y'). 
\end{equation}
Geometrically, $u_1$ and $u_3$ are the squares of the norms of $y$ and $y'$, while $u_2$ and $\CW$ are the dot and cross products of $y$ and $y'$. Obviously, they satisfy 
\begin{equation} \label{E:CL1}
(u_2)^2 + \CW^2 - u_1 u_3=0 
\end{equation}
and determine the solution $y$ and $y'$ up to a rotation in the complex plane. One of the advantages of these new variables is that identity \eqref{E:Wronski2}, which relates $\CW$ to $|y|^2$, is easier to handle and directly links to the generation of the instability. 

One may compute that $(u_1, u_2, u_3, \CW)$ satisfy
\begin{equation} \label{E:CL2} \begin{cases} 
u_{1}' = 2  u_2\\
u_{2}' =  \left(k^2 + \dfrac {U_+''  \left(U_+ - c_R\right)}{\left(U_+ - c_R\right)^2 + c_I^2} \right)u_1    + u_3 \\
u_{3}' = 2 \left(k^2 + \dfrac {U_+''   \left(U_+ - c_R\right)}{\left(U_+ - c_R\right)^2 + c_I^2} \right) u_2 + \dfrac {2 c_I  U_+''} {\left(U_+ - c_R\right)^2 + c_I^2} \CW\\
\CW' = \dfrac {c_I U_+''}{\big(U_+ - c_R\big)^2 +  c_I^2} u_1
\end{cases}\end{equation}
where $U_+, \ U_+', \ U_+''$ are evaluated at $x_2$. It is straightforward to show that \eqref{E:CL1} is satisfied by solutions of \eqref{E:CL2}. 

To handle the singularity $U_+- c_R=U_+ - U_+(s)$ at $s=0$ for small $c_I$, we work with a new independent variable 
\[ \tau = \tau (x_2, s) := U_+(x_2) -c_R= U_+(x_2) - U_+(s).\]
 This is a valid $C^3$ change of coordinate on $[s_0 - \delta, s_0+\delta]$ depending on  the parameter $s$.  Let $\delta_{1,2} = \delta_{1,2}(s)>0$ be defined by 
\[ -\delta_1(s) := \min\{ U_+(s_0\pm \delta) - U_+(s)\}, \qquad \delta_2(s) := \max\{ U_+(s_0\pm \delta) - U_+(s)\}.\]
Note that this implies that $\delta_{1,2} =O(\delta)$ due to \eqref{E:CLA2} and the fact $|s-s_0| \ll \delta$. For $\tau \in [-\delta_1, \delta_2]$, we have 
\begin{equation} \label{E:CL3} \begin{cases} 
u_{1\tau} = \dfrac 2{U_1}  u_2\\
u_{2\tau} = \dfrac 1{U_1} \left(k^2 + \dfrac {\tau U_2 }{\tau^2 + c_I^2} \right)u_1    + \dfrac {u_3}{U_1} \\
u_{3\tau} = \dfrac 2{U_1} \left(k^2 + \dfrac {\tau U_2}{\tau^2 + c_I^2} \right) u_2 + \dfrac {2c_I  U_2} {(\tau^2 + c_I^2)U_1} \CW\\
\CW_\tau = \dfrac 1{U_1} \dfrac {c_I U_2}{\tau^2 + c_I^2} u_1
\end{cases}\end{equation}
where 
\[
U_1 (s, \tau) := U_+'\big(x_2(\tau, s)\big), \quad U_2 (\tau,s) := U_+''\big(x_2(\tau, s)\big).
\]
Clearly $U_{1,2}$ and $\delta_{1,2} (s)$ are also smooth in $s$. 

We first obtain some uniform estimates on solutions to the transformed system  \eqref{E:CL3}. 

\begin{lemma} \label{L:CL1}
For any $\alpha \in (0, 1)$ and $C_0 \ge 1$, there exist $\delta \in (0,1)$ satisfying \eqref{E:CLA2} and $C, \varepsilon_0>0$ depending only on $\alpha$, $k$, $C_0$, $|U_+'(s_0)|^{-1}$, and $|U_+|_{C^3 ([0, h_+])}$, such that  for all 
\[
|s-s_0|< \varepsilon_0, \; , 0< |c_I| < \varepsilon_0 \text{ and } \tau_{1,2} \in [-\delta_1, \delta_2], \text{ with } |\tau_2| \le C_0 |\tau_1|, 
\]
solutions to \eqref{E:CL3} satisfy  on the interval $[\tau_1, \tau_2]$ (or $[\tau_2, \tau_1]$ if $\tau_2 < \tau_1$)
\[\begin{split}
&|u_1(\tau)|, \; |\CW(\tau)| \le C |u(\tau_1)|_w\\
&|u_2(\tau)| \le  C\big(1+ |\log \frac {c_I^2 + \tau_1^2}{c_I^2 + \tau^2}| \big) |u(\tau_1)|_w, \quad |u_3(\tau) | \le C \big(1 + (\log \frac {c_I^2 + \tau_1^2}{c_I^2 + \tau^2})^2\big)|u(\tau_1)|_w\\
&|u_1(\tau') - u_1(\tau'')| \le  C \Big(  |\log \frac {c_I^2 + \tau_1^2}{c_I^2 + (\tau''')^2}||\tau' - \tau''| + \big(c_I^2 + (\tau''')^2\big)^{\frac {1-\alpha}2} |\tau'-\tau''|^\alpha \Big) |u(\tau_1)|_w, 
\end{split}\]
where $\tau''' \ge \frac 1{C_0}\max\{|\tau'|, |\tau''|\}$ and 
\[
|u(\tau)|_w := \big(u_1(\tau)^2 + u_2(\tau)^2 + u_3(\tau)^2 + \CW(\tau)^2\big)^{\frac 12}.
\]
Moreover, for $\tau \le \min\{|\tau_1|, |\tau_2|\}$,  
\[\begin{split}
&|u_2(\tau) - u_2(-\tau)| \le C \tau_*^{1-\alpha} \tau^\alpha  |u(\tau_1)|_w, \quad   |u_3(\tau) - u_3(-\tau)| \le C |u(\tau_1)|_w
\end{split}\]
where 
\[
\tau_* = (c_I^2 + \tau_1^2)^{\frac 12}.
\]
\end{lemma}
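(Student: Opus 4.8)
\emph{Proof proposal.} Since \eqref{E:CL3} is linear in $(u_1,u_2,u_3,\CW)$, I normalize $|u(\tau_1)|_w=1$ and establish all five bounds at once by a continuity (bootstrap) argument on the $\tau$-interval with endpoints $\tau_1,\tau_2$. After shrinking $\delta$ according to \eqref{E:CLA2}, $U_1$ stays bounded above and below by positive constants, and $U_2$ together with its $\tau$-Lipschitz constant is controlled, all in terms of $|U_+'(s_0)|^{-1}$ and $|U_+|_{C^3([0,h_+])}$. The only singular coefficients in \eqref{E:CL3} are $a(\tau):=\tau U_2/(\tau^2+c_I^2)$ and $b(\tau):=c_I U_2/(\tau^2+c_I^2)$, and two structural features drive the whole argument. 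First, $b$ is uniformly integrable: $\int_{\tau_1}^{\tau}|b|\,d\tau'\le \pi|U_2|_{C^0}$, independently of $c_I$. Second, since $\tau\mapsto \tau/(\tau^2+c_I^2)$ is odd, $\tau\mapsto c_I/(\tau^2+c_I^2)$ is even, and $U_2$ is Lipschitz along the $\tau$-variable, $a$ is odd and $b$ is even up to $O(|\tau|)$ corrections. Writing $L(\tau):=\bigl|\log\frac{c_I^2+\tau_1^2}{c_I^2+\tau^2}\bigr|$, the oddness of $a$ gives the cancellation $\bigl|\int_{\tau_1}^{\tau}a\,d\tau'\bigr|\le C\bigl(|\tau-\tau_1|+L(\tau)\bigr)$ --- crucially with $L(\tau)$, not $\sup_{[\tau_1,\tau]}L$, on the right --- and, using the hypothesis $|\tau_2|\le C_0|\tau_1|$ (which is exactly what keeps $(c_I^2+\tau^2)/(c_I^2+\tau_1^2)$ from degenerating), $\int_{\tau_1}^{\tau}L(\tau')^n\,d\tau'\le C_n|\tau-\tau_1|$ for $n=0,1,2$, again uniformly in $c_I$.

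\emph{The hierarchy of bounds.} Suppose all five estimates hold with $C$ replaced by a large $C'$ on a maximal subinterval containing $\tau_1$. From $u_1'=(2/U_1)u_2$ and the provisional $u_2$-bound, integrability of $L$ gives $|u_1-u_1(\tau_1)|\le C\delta$, hence $|u_1|\le 2$ for $\delta$ small; estimating the same integral pointwise in $\tau',\tau''$ produces the H\"older modulus of $u_1$ (the $|\tau'-\tau''|L$ term from the bounded-times-$L$ part of $u_2$, the $(c_I^2+(\tau''')^2)^{(1-\alpha)/2}|\tau'-\tau''|^\alpha$ term by interpolation near $\tau'=0$). Feeding this into $\CW'=(1/U_1)b\,u_1$ and using the integrability of $b$ yields $|\CW|\le C$. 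In $u_2'=(1/U_1)[(k^2+a)u_1+u_3]$ the $u_3$-term contributes $\le C\int(1+L^2)\le C\delta$, while $\int(k^2+a)u_1$ is $\le C(\delta+L(\tau))$ after splitting $u_1=u_1(\tau_1)+(u_1-u_1(\tau_1))$ and using the $a$-cancellation on the constant part and the just-proven modulus on the remainder; hence $|u_2(\tau)|\le C(1+L(\tau))$. The delicate equation is $u_3'=(2/U_1)[(k^2+a)u_2+b\,\CW]$: the $b\,\CW$-term is controlled by integrability of $b$, but $\int a\,u_2$ must be handled by writing $a\,u_2=a\,u_2(\tau_1)+a\,\bigl(u_2(\tau')-u_2(-\tau')\bigr)+a\,\bigl(u_2(-\tau')-u_2(\tau_1)\bigr)$, so that the near-oddness of $a$ and the smallness of the reflected difference $u_2(\tau')-u_2(-\tau')$ prevent a spurious $(\sup L)^2$ and leave $|u_3(\tau)|\le C(1+L(\tau)^2)$.

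\emph{Reflection estimates and closing.} The reflected-difference bounds are therefore not an afterthought but are produced inside the same bootstrap: for $\tau\le\min\{|\tau_1|,|\tau_2|\}$ both $\pm\tau$ lie in the solution's domain, and integrating $u_i'$ over the symmetric interval $[-\tau,\tau]$ makes the odd leading part of $a$ cancel against the even leading parts of $u_1,u_2$, leaving only the forcing created by the $O(|\tau'|)$ asymmetry of the coefficients; combining this with the moduli already obtained, with $\tau\le|\tau_1|\le\tau_*=(c_I^2+\tau_1^2)^{1/2}$, and with the boundedness of $r\mapsto r^{-(1-\alpha)}(\log r)^2$ on $[1,\infty)$ yields $|u_2(\tau)-u_2(-\tau)|\le C\tau_*^{1-\alpha}\tau^\alpha$ and $|u_3(\tau)-u_3(-\tau)|\le C$. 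Choosing $\delta$ small (to absorb the $C\delta$ terms and keep the $u_2$--$u_3$ coupling controlled) and $\varepsilon_0$ small (so $|c_I|,|s-s_0|$ are small), the displayed inequalities reproduce all five bounds with the asserted constant $C<C'$, and a standard connectedness argument extends them over the whole interval $[\tau_1,\tau_2]$ (or $[\tau_2,\tau_1]$), with every constant depending only on the quantities listed in the statement.

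\emph{Main obstacle.} Everything turns on uniformity in $c_I$ as $c_I\to0$: a naive Gronwall estimate for \eqref{E:CL3} yields a factor $\bigl((c_I^2+\tau^2)/(c_I^2+\tau_1^2)\bigr)^{O(1)}$ rather than a power of its logarithm, which is useless for the subsequent analysis. The only mechanism that defeats this is the near-parity of $a$ and $b$, which converts their potentially $\log(1/|c_I|)$-sized integrals into $L(\tau)$-sized (not $\sup L$-sized) contributions; but extracting this cancellation requires the moduli of continuity of $u_1$ and $u_2$ and the reflected differences $u_2(\tau)-u_2(-\tau)$, $u_3(\tau)-u_3(-\tau)$, which are themselves among the bounds being bootstrapped. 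Keeping this circular system of estimates consistent --- and separating the singular and regular parts of the solution sharply enough that the basic bounds and the reflection estimates close simultaneously, with all constants independent of $c_I$ and $s$ --- is the technical heart of the lemma.
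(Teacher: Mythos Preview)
Your proposal is correct and follows the same strategy as the paper: a continuity/bootstrap argument exploiting the uniform integrability of $c_I/(\tau^2+c_I^2)$, the explicit antiderivative of $\tau/(\tau^2+c_I^2)$, and reflected differences $u_{2,3}(\tau)-u_{2,3}(-\tau)$ to carry the logarithmic bounds across $\tau=0$.

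Two organizational points where the paper is cleaner. First, the paper bootstraps on the single hypothesis $|u_2(\tau)|\le M(1+L(\tau))$ and derives everything else from it, rather than carrying all five estimates simultaneously; this makes the closing step more transparent. Second, your treatment of the $u_3$ bound via the splitting $u_2(\tau')=u_2(\tau_1)+[u_2(\tau')-u_2(-\tau')]+[u_2(-\tau')-u_2(\tau_1)]$ is unnecessary on the half-interval $[0,\tau_1]$: there the integral $\int_\tau^{\tau_1}\frac{\tau'}{(\tau')^2+c_I^2}\,L(\tau')\,d\tau'$ equals $\tfrac14 L(\tau)^2$ \emph{exactly} (since $\tfrac{\tau'}{(\tau')^2+c_I^2}=-\tfrac12 L'(\tau')$), so the $(1+L^2)$ bound for $u_3$ follows by direct integration without any parity considerations. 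The reflection machinery is genuinely needed only to extend the $u_2,u_3$ bounds past $\tau=0$, where $L$ starts to decrease; here the paper writes down the explicit ODE system \eqref{E:CL4.5} for $\tilde u_{2,3}(\tau):=u_{2,3}(\tau)-u_{2,3}(-\tau)$ and closes an integral inequality for $|\tilde u_3|_{C^0[0,\tau]}$, which is precisely the content of your ``reflection estimates'' paragraph.
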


In the above estimates, an almost logarithmic singularity appears at $\tau=0$, but disappears as $\tau$ evolves past $0$ at roughly the same rate at which it appeared. 

\begin{proof} 
For notational simplicity, we mainly consider the case $\tau_1>0$ and $\tau_2  \in [-\tau_1, \tau_1]$, the argument for the other cases being similar (or easier) due to the absolute value outside the logarithm in the above inequalities. See the comments at the end of the proof.

First, we establish an inequality that we will make repeated use of later:
\begin{equation} \label{E:CL4}
\int_{\tau'}^{\tau''} \left|\log \frac {c_I^2 + (\tau''')^2}{c_I^2 + \tau^2}\right|^m d\tau \le C (c_I^2 +(\tau''')^2)^{\frac{1-\alpha}2} |\tau'-\tau''|^\alpha, \text{ where } |\tau'|, |\tau''|\le C_0 |\tau'''|,
\end{equation}
and the constant $C$ depends only on $\alpha \in (0, 1)$, $C_0\ge 1$, and $m>0$ but is independent of $c_I$, $\tau', \tau''$, and $\tau'''$. The above estimate is based on the observation 
\[
\frac {c_I^2 + (\tau''')^2}{c_I^2 + \tau^2} \ge \frac 1{C_0^2} 
\implies \left|\log \frac {c_I^2 + (\tau''')^2}{c_I^2 + \tau^2}\right|^m \le C \left|\frac {c_I^2 + (\tau''')^2}{c_I^2 + \tau^2} \right|^{\frac {1-\alpha}4}.
\]
Applying H\"older's inequality
\[\begin{split}
\int_{\tau'}^{\tau''} \left|\log \frac {c_I^2 + (\tau''')^2}{c_I^2 + \tau^2}\right|^m d\tau &\le C (c_I^2 +(\tau''')^2)^{\frac{1-\alpha}4} |\tau'-\tau''|^\alpha \Big(\int_{\tau'}^{\tau''}  \tau^{-\frac 12} d\tau\Big)^{1-\alpha} \\
&\le C (c_I^2 +(\tau''')^2)^{\frac{1-\alpha}4} |\tau'''|^{\frac{1-\alpha}2} |\tau'-\tau''|^\alpha
\end{split}\]
and thus \eqref{E:CL4}  follows.

For a constant $M\ge 1$, let 
\[
\tau_0 := \sup \{\tau' \in [\tau_2, \tau_1]  \mid |u_2(\tau)| \le M (1+|\log \frac {c_I^2 + \tau_1^2}{c_I^2 + \tau^2}|) |u(\tau_1)|_w, \; \textrm{for all }  \tau \in [\tau', \tau_1]\}.
\]
Clearly $\tau_0< \tau_1$. We will show $\tau_0=\tau_2$ for appropriately chosen $M$ and $\delta$. 
In the rest of the proof, we will use $C$ to denote a generic constant depending only on $C_0$, $\alpha$, $k$, $|U_+'(s_0)|^{-1}$, and $|U_+|_{C^3 ([0, h_+])}$.  For $\tau_0\le \tau' < \tau'' \le \tau_1$, let $\tau'''$ be given with $\tau''' \ge \frac 1{C_0} \max\{|\tau'|, |\tau''|\}$.  Then from \eqref{E:CL3} and \eqref{E:CL4}, we have
\[\begin{split} 
 |u_1(\tau') - u_1(\tau'')| & \le CM |u(\tau_1)|_w \int_{\tau'}^{\tau''} ( 1+ |\log \frac {c_I^2 + \tau_1^2}{c_I^2 + \tau^2}|) \,  d {\tau}\\
&\le  CM |u(\tau_1)|_w \Big( (1+ |\log \frac {c_I^2 + \tau_1^2}{c_I^2 + (\tau''')^2}|)|\tau' - \tau''| + \int_{\tau'}^{\tau''} |\log \frac {c_I^2 + (\tau''')^2}{c_I^2 + \tau^2}| \, d {\tau} \Big)\\
&\le  CM \Big(  |\log \frac {c_I^2 + \tau_1^2}{c_I^2 + (\tau''')^2}||\tau' - \tau''| + \big(c_I^2 + (\tau''')^2\big)^{\frac {1-\alpha}2} |\tau'-\tau''|^\alpha \Big) |u(\tau_1)|_w.
\end{split}\]
This inequality indicates that $u_1$ is H\"older continuous.  It also shows that $|u_1|\le 2 |u(\tau_1)|_w$ on $[\tau_0, \tau_1]$ if $M$ and $\delta$ are chosen to satisfy $C\delta M \le 1$. 

Integrating $\CW_\tau$ on $[\tau, \tau_1]$ with $\tau \in [\tau_0, \tau_1]$ and using the estimate of $u_1$, we have 
\[
|\CW(\tau) - \CW(\tau_1)| =  |\int_{\tau}^{\tau_1} \frac {u_1(\tau') U_2(\tau') }{U_1(\tau')}  \frac {c_I}{(\tau')^2 + c_I^2} \, d\tau'| \le C |u_1|_{C^0} \le C|u(\tau_1)|_w
\]
if $C\delta M \le 1$.  

We will proceed in two steps to estimate $u_{2,3}$. First, for $\tau \in [\max\{0, \tau_0\}, \tau_1]$, \eqref{E:CL3} and the above bounds on $\CW$ imply
\[
|u_3(\tau) - u_3(\tau_1)| \le  C |u(\tau_1)|_w\int_{\tau}^{\tau_1} \left[ M  \big(1+ \frac {\tau'}{(\tau')^2 + c_I^2} \big) \big(1+ |\log \frac {c_I^2 + \tau_1^2}{c_I^2 + (\tau')^2}| \big) + \frac {|c_I|}{(\tau')^2 + c_I^2}\right] \,  d \tau'.
\]
Some terms in the integrand above can be directly integrated, yielding terms like $\tan^{-1}$, $\log$, and $\log^2$.  Ultimately, we find 
\[\begin{split}
|u_3(\tau) - u_3(\tau_1)| \le & 
C \Big(1+ M\big(\tau_*^{1-\alpha} |\tau_1 -\tau|^\alpha + |\log \frac {c_I^2 + \tau_1^2}{c_I^2 + \tau^2}| + (\log \frac {c_I^2 + \tau_1^2}{c_I^2 + \tau^2})^2\big)\Big)|u(\tau_1)|_w.
\end{split}\]
Along with the estimates on $u_{1,3}$,  the $u_2$ equation in \eqref{E:CL3} implies, for $\tau \in [\max\{0, \tau_0\}, \tau_1]$, 
\[\begin{split}
|u_2(\tau) - u_2(\tau_1)| &\le  C \int_{\tau}^{\tau_1} \left[  \big(1+ \frac {\tau'}{c_I^2 + (\tau')^2}\big)  |u(\tau_1)|_w +  |u_3(\tau')| \right] \, d \tau'  \\
&\le C\big(|\log \frac {c_I^2 + \tau_1^2}{c_I^2 + \tau^2}| +  M \tau_*^{1-\alpha} |\tau_1 -\tau|^\alpha \big) |u(\tau_1)|_w \\
&\le C\big( 1+ |\log \frac {c_I^2 + \tau_1^2}{c_I^2 + \tau^2}| \big) |u(\tau_1)|_w,
\end{split}\]
if $CM \delta \le1$. From the above inequalities, we infer that the value $M$ as in the definition of $\tau_0$ can not be achieved on $[\max\{0, \tau_0\}, \tau_1]$ if  $M$ is reasonably large and $\delta$ is chosen such that $C\delta M\le1$. Therefore, either (i) $\tau_0=\tau_2$ in the case $\tau_2 \in [0, \tau_1]$ which completes the proof of the claim $\tau_0=\tau_2$, or else (ii) $\tau_0 < 0$ in the case $\tau_2 <0$. 

Let us consider the latter:  suppose that $\tau_2<0$.  Unfortunately,  one can not simply extend the above argument for $\tau<0$ since the desired logarithmic upper bounds on $u_{2,3}$ start to decrease as $\tau$ decreases past $0$. 
Instead, we study the quantities
\[
\tilde u_{ 2, 3} (\tau) := u_{2,3}(\tau) - u_{2,3}(-\tau),\quad \tau \ge 0,
\]
which satisfy 
\begin{equation} \label{E:CL4.5} \begin{cases} 
\tilde u_{2\tau} = - \dfrac 1{U_1(-\tau)} \tilde u_3 + \big(\dfrac 1{U_1} + \dfrac 1{U_1(-\tau)}\big) u_3 +       \dfrac \tau{\tau^2  + c_I^2} \big(\dfrac{U_2 u_1}{U_1 } -  \dfrac{U_2 u_1}{U_1 } (-\tau)\big) \\
\qquad\qquad + \dfrac {k^2 u_1}{U_1}  + \dfrac {k^2 u_1}{U_1}(-\tau)\\
\tilde u_{3\tau} = \dfrac {2\tau}{\tau^2  + c_I^2}\Big(   \dfrac{U_2}{U_1 } (-\tau) \tilde u_2 + \big(\dfrac{U_2}{U_1}  - \dfrac{U_2}{U_1 } (-\tau)\big) u_2 \Big) + 2k^2 \big(\dfrac{u_2}{U_1} + \frac{u_2}{U_1}(-\tau)\big)  \\
\qquad\qquad+ \dfrac {2 c_I} {(\tau^2 + c_I^2)} \big((\dfrac{U_2\CW}{U_1}) + (\dfrac{U_2\CW}{U_1})(-\tau)\big).
\end{cases}\end{equation}
Here, all functions are evaluated at $\tau$ unless stated otherwise.  Recall that the assumed upper bound on $u_2$, and the estimates derived for $u_1$ and $\CW$, are valid on $[\tau_0, \tau_1]$ and $\tau_0<0$. Using these, with $\tau'''=|\tau_1|$ for $u_1$, we obtain on $[0, \min\{-\tau_0, \tau_1\}]$
\be \label{tilde u2 inequality} \begin{split}
|\tilde u_2| &\le C \int_0^\tau \left( |\tilde u_3(\tau')| + |u_3(\tau')| +  (M \tau_*^{1-\alpha} (\tau')^{\alpha-1}+1) |u(\tau_1)|_w \right) \,  d\tau' \\
& \le  C\left( M \tau_*^{1-\alpha} \tau^\alpha |u(\tau_1)|_w  + \int_0^\tau |\tilde u_3(\tau')| \,  d\tau' \right).
\end{split}\ee
Similarly, the $\tilde u_3$ equation and the assumed upper bound on $u_2$ imply that, for $\tau \in  [0, \min\{-\tau_0, \tau_1\}]$, 
\[\begin{split}
|\tilde u_3| & \le C \int_0^\tau \left( \frac {\tau'}{c_I^2 + (\tau')^2} |\tilde u_2(\tau')| + \big(
M( 1+ |\log \frac {c_I^2 + \tau_1^2}{c_I^2 + (\tau')^2}|) + \frac { |c_I|} {(\tau')^2 + c_I^2} \big) |u(\tau_1)|_w \right) \, d\tau'\\
& \le  C\Big( \int_0^\tau \frac {\tau'}{c_I^2 + (\tau')^2} |\tilde u_2(\tau')| d\tau' + \big( 1 +  M \tau_*^{1-\alpha} \tau^\alpha   \big)  |u(\tau_1)|_w  \Big).
\end{split}\]
Substituting the estimate \eqref{tilde u2 inequality}  for $\tilde u_2$ into the one for $\tilde u_3$ above, taking $CM \delta\le 1$, and then integrating by parts, leads to the following inequality:
\[\begin{split}
|\tilde u_3| \le& C\Big(|u(\tau_1)|_w  + \int_0^\tau M \tau_*^{1-\alpha} (\tau')^{\alpha-1} |u(\tau_1)|_w  + \frac {\tau'}{c_I^2 + (\tau')^2} \int_0^{\tau'} |\tilde u_3(\tau'')| d\tau'' d\tau'\Big)\\
\le & C\Big(|u(\tau_1)|_w  +  \int_0^\tau |\log \frac {c_I^2 + \tau^2}{c_I^2 + (\tau')^2}| |\tilde u_3(\tau')| d\tau'\Big). 
\end{split}\]
Therefore, for $\tau \in [0, \min\{-\tau_0, \tau_1\}]$,
\[\begin{split}
|\tilde u_3|_{C^0([0, \tau])} & \le C \left(|u(\tau_1)|_w  +  \left(\int_0^\tau |\log \frac {c_I^2 + \tau^2}{c_I^2 + (\tau')^2}| \, d\tau' \right) |\tilde u_3|_{C^0([0, \tau])} \right)\\
& \le C \big(|u(\tau_1)|_w  + (c_I^2 + \tau^2)^{\frac {1-\alpha}2} \tau^\alpha |\tilde u_3|_{C^0([0, \tau])} \big),
\end{split}\]
which, along with the fact that $\tilde u_3(0)=0$, implies
\[
|\tilde u_3|_{C^0([0, \tau])} \le C |u(\tau_1)|_w 
\]
for $\tau \in [0, \min\{-\tau_0, \tau_1\}]$. From this we may conclude that
\[
|\tilde u_2(\tau)| \le C M \tau_*^{1-\alpha} \tau^\alpha |u(\tau_1)|_w, \quad \tau \in [0, \min\{-\tau_0, \tau_1\}]. 
\]

Now, choosing $M$ and $\delta$ such that $M\ge C$ and $CM \delta\le 1$, the above inequality and the previous estimate of $u_2$ on $[0, \tau_1]$, show that we must have $\tau_0 \leq \max\{-\tau_1, \tau_2\}$.  Thus, all of the inequalities in statement of the lemma hold on  $[\max\{\tau_2, -\tau_1\}, \tau_1]$, which completes the proof in the case $\tau_2 \in [-\tau_1, \tau_1]$.

We will conclude the proof of the lemma by discussing several other cases of $\tau_{1,2}$. First note that the  case $\tau_1<0$ can be treated in the exactly same manner as the case $\tau_1>0$ because one may consider $u(-\tau)$ and all the estimates go through. The above arguments complete the proof in the case $\tau_1>0$ and $\tau_2 \in [-\tau_1, \tau_1]$. In fact, if $\tau_2\in [0, \tau_1]$,  the estimates on $\tilde u_{2,3}$ are superfluous. The proof for the case $0<\tau_1 \le \tau_2 \le C_0 \tau_1$ is exactly the same (again, there is no need to consider $\tilde u_{2,3}$). Finally, if $\tau_2 \in [-C_0 \tau_1, -\tau_1]$ we can combine the estimates on $[\tau_2, -\tau_1]$ and those on  $[-\tau_1, \tau_1]$ to obtain the desired inequalities. 
\end{proof}

The above lemma provides some preliminary control of solutions to \eqref{E:CL3}. We refine them in the next 
corollary.  
\begin{corollary} \label{C:CL0.5}
For any $\tau_{1,2} \in [-\delta_1, \delta_2]$ with $|\tau_2| \le |\tau_1|$,  the following inequality holds  
\[
|u_1(\tau_1) - u_1 (\tau_2)| \le C (|c_I| + |\tau_1|)^{1-\alpha} \log^2 (|c_I| + |\tau_1|)  |\tau_1 - \tau_2|^\alpha |u(\delta_2)|_w.
\]
Moreover, for $0\le \tau \le \min \{\delta_1, \delta_2\}$, we have 
\[\begin{split}
&|u_2(\tau) - u_2(-\tau)| \le C (|c_I| + |\tau|)^{1-\alpha} \tau^\alpha \log^2 (|c_I| + |\tau|)  |u(\delta_2)|_w\\
&|\CW(\tau) -\CW(-\tau) - \frac {2U_2(0) u_1(0)}{U_1(0) } \tan^{-1} \frac {\tau}{c_I}| + |\CW(\tau) + \CW(-\tau) - 2\CW(0)|\\
&\qquad \qquad \qquad \qquad\qquad\qquad\qquad\qquad  \le  C |c_I| \log^2 |c_I| \log (1+ \frac \tau{|c_I|}) |u(\delta_2)|_w,
\end{split}\]
and
\[\begin{split}
|u_3(\tau)- u_3(-\tau) - \frac{2U_2(0)}{U_1(0)}& \big( \CW(-\tau) + \CW(\tau)   \big) \tan^{-1} \frac {\tau}{c_I} | \le C \tau^\alpha  
|u(\delta_2)|_w.
\end{split}\]
Here $C$ is independent of $c_I$ and $\tau$.  Also, wherever $|u(\delta_2)|_w$ occurs above, it can be replaced by $|u(-\delta_1)|_w$. 
\end{corollary}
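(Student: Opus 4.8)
The plan is to refine the a priori bounds of Lemma~\ref{L:CL1} by reinserting them into the integral forms of the systems \eqref{E:CL3} and \eqref{E:CL4.5}. Throughout I would apply Lemma~\ref{L:CL1} with the base point $\tau_1=\delta_2$ (and, symmetrically, $\tau_1=-\delta_1$; the two are comparable by \eqref{E:CLA2}), so that all of its conclusions hold on the entire interval $[-\delta_1,\delta_2]$ with $|u(\delta_2)|_w$ --- equivalently $|u(-\delta_1)|_w$ --- on the right-hand side. The argument then proceeds in three stages, mirroring the bootstrap structure already used in the proof of Lemma~\ref{L:CL1}.

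First I would obtain the refined H\"older estimate for $u_1$ as a direct reformulation of the H\"older bound in Lemma~\ref{L:CL1}: choosing $\tau'''=|\tau_1|$ there, one uses $|\tau_1-\tau_2|\le 2|\tau_1|\le 2(|c_I|+|\tau_1|)$ to trade a power of $|\tau_1-\tau_2|$ for the factor $(|c_I|+|\tau_1|)^{1-\alpha}$, and one bounds $|\log\frac{c_I^2+\delta_2^2}{c_I^2+\tau_1^2}|\le C(1+|\log(|c_I|+|\tau_1|)|)\le C\log^2(|c_I|+|\tau_1|)$, which is legitimate because $\delta_2=O(\delta)$ is bounded above and bounded away from $1$. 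Next, the $\CW$ estimates would come from integrating the $\CW$-equation of \eqref{E:CL3} over $[-\tau,\tau]$ and over $[0,\pm\tau]$, after the splitting $\frac{U_2u_1}{U_1}(\tau')=\frac{U_2(0)u_1(0)}{U_1(0)}+R(\tau')$: the frozen piece integrates exactly to $\frac{2U_2(0)u_1(0)}{U_1(0)}\tan^{-1}\frac{\tau}{c_I}$, respectively to $2\CW(0)$, producing the stated leading terms, while the remainder $R$ is controlled using the Lipschitz continuity of $U_2/U_1$ (here the standing hypothesis $U_+\in C^3$ is used) together with the estimate $|u_1(\tau')-u_1(0)|\le C(|c_I|+|\tau'|)^{1-\alpha}|\tau'|^{\alpha}\log^2(|c_I|+|\tau'|)\,|u(\delta_2)|_w$ just obtained; tested against the kernels $\frac{|c_I|}{\tau'^2+c_I^2}$ and $\frac{\tau'}{\tau'^2+c_I^2}$, these remainders are integrated by means of \eqref{E:CL4} and an elementary case split according to whether the integration variable lies below or above $|c_I|$.

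The main stage is the coupled analysis of $\tilde u_{2}$ and $\tilde u_{3}$ from \eqref{E:CL4.5}. Integrating $\tilde u_{3\tau}$ from $0$, the term $\frac{2c_I}{\tau'^2+c_I^2}\bigl(\frac{U_2\CW}{U_1}(\tau')+\frac{U_2\CW}{U_1}(-\tau')\bigr)$, once its coefficient is frozen at $\tau'=0$ and $\CW(\pm\tau')$ is replaced by $\CW(\pm\tau)$ modulo the $\CW$-errors from the previous stage, contributes precisely $\frac{2U_2(0)}{U_1(0)}(\CW(\tau)+\CW(-\tau))\tan^{-1}\frac{\tau}{c_I}$; all the remaining forcing terms in $\tilde u_{3\tau}$ --- those carrying $u_2$, $u_3$, and the $O(\tau')$ difference $\frac{U_2}{U_1}(\tau')-\frac{U_2}{U_1}(-\tau')$ --- integrate to $O(\tau^{\alpha})\,|u(\delta_2)|_w$ using the logarithmic bounds of Lemma~\ref{L:CL1} and \eqref{E:CL4} (the only subtle point here is the identity $\int_0^\tau|\log(c_I^2+\tau'^2)|\,d\tau'=\tau\log(c_I^2+\tau^2)-2\tau+2|c_I|\tan^{-1}\frac{\tau}{c_I}$, which shows the log-singularity at $\tau'=0$ is integrable with the right rate). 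What then remains is a coupling between $E_3:=\tilde u_3-\frac{2U_2(0)}{U_1(0)}(\CW(\tau)+\CW(-\tau))\tan^{-1}\frac{\tau}{c_I}$ and $\tilde u_2$: from the $\tilde u_2$-equation, $\tilde u_2$ is driven by $\tilde u_3$ through the bounded coefficient $-1/U_1(-\tau')$ plus terms already controlled, whence $|\tilde u_2(\tau)|\le C(|c_I|+\tau)^{1-\alpha}\tau^{\alpha}\log^2(|c_I|+\tau)\,|u(\delta_2)|_w+C\int_0^\tau|E_3|$, and $E_3$ is in turn driven by $\tilde u_2$ through the singular kernel $\frac{2\tau'}{\tau'^2+c_I^2}\frac{U_2}{U_1}(-\tau')$. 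Substituting the first bound into the integral form of $E_3$ and integrating by parts would turn the $E_3$-relation into the Volterra inequality
\[
|E_3(\tau)|\ \le\ C\tau^{\alpha}\,|u(\delta_2)|_w\ +\ C\int_0^\tau|E_3(\tau')|\,\log\frac{\tau^2+c_I^2}{\tau'^2+c_I^2}\,d\tau',
\]
whose kernel has $L^1$-mass $O\bigl((|c_I|+\tau)^{1-\alpha}\tau^{\alpha}\bigr)=O(\delta)$ on $[0,\min\{\delta_1,\delta_2\}]$; shrinking $\delta$ and running the continuation/Gronwall argument as in Lemma~\ref{L:CL1} would give $|E_3(\tau)|\le C\tau^{\alpha}|u(\delta_2)|_w$, which is the asserted $u_3$-difference estimate, and feeding this back into the bound for $\tilde u_2$ yields the asserted $u_2$-difference estimate.

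I expect the hard part, exactly as in Lemma~\ref{L:CL1}, to be the $\tilde u_2\leftrightarrow\tilde u_3$ coupling through a kernel of size $\sim 1/\tau$ near $\tau=0$, now compounded by two extra demands: one must isolate the exact $\tan^{-1}$ leading terms, and one must verify that every remainder integral decays at the asserted --- and crucially non-symmetric in $|c_I|$ versus $\tau$ --- rate. The bookkeeping for the latter rests on two elementary facts, used repeatedly: $x^{1-\alpha}|\log x|^{m}$ is bounded, and $x|\log x|^{m}$ is increasing, on a right neighborhood of $0$. These are precisely what allow one to absorb the apparently dangerous $|c_I|\,|\log|c_I||^{m}$ contributions that appear whenever the integration variable lies below $|c_I|$, and to convert the $\log$-ratio factors coming from Lemma~\ref{L:CL1} into the powers of $\log(|c_I|+|\tau|)$ that appear in the statement.
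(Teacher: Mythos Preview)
Your approach is correct and the overall architecture---refine $u_1$, then $\CW$, then $\tilde u_2,\tilde u_3$---matches the paper's. The one place you work harder than necessary is the last stage. You set up a coupled Volterra inequality for $E_3$ and $\tilde u_2$ and close it by Gronwall, whereas the paper avoids this entirely by applying Lemma~\ref{L:CL1} \emph{twice}: once with base point $\tau_1=\tau$, which already gives $|\tilde u_2(\tau)|\le C(c_I^2+\tau^2)^{(1-\alpha)/2}\tau^{\alpha}|u(\tau)|_w$, and once with $\tau_1=\delta_2$ to convert $|u(\tau)|_w\le C\log^2(|c_I|+|\tau|)\,|u(\delta_2)|_w$. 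Combining these yields the corollary's $u_2$-difference bound immediately, with no further bootstrap. With $\tilde u_2$ thus controlled a priori, the paper simply differentiates the full combination $u_3(\tau)-u_3(-\tau)-\tfrac{2U_2(0)}{U_1(0)}(\CW(\tau)+\CW(-\tau))\tan^{-1}\tfrac{\tau}{c_I}$, bounds its derivative termwise using \eqref{E:CL4.5}, \eqref{E:CL4.7}, and the already-established $\tilde u_2$ and $\CW$ bounds, and integrates via \eqref{E:CL4}. Your route works but costs you an extra closing argument (and the side remark about ``shrinking $\delta$'', which is awkward since $\delta$ was fixed in Lemma~\ref{L:CL1}); the paper's moving-base-point trick is the cleaner shortcut.
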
 

\begin{proof}
Taking $\tau'''= |\tau_1|$ and appealing to Lemma \ref{L:CL1}, we can see that $u_1$ is H\"older continuous on the interval $[\tau_1,\tau_2]$ with exponent $\alpha$ and H\"older constant 
\[
C (c_I^2 + \tau_1^2)^{\frac{1-\alpha}2} |u(\tau_1)|_w 
\le C (|c_I| + |\tau_1|)^{1-\alpha} \log^2 (|c_I| + |\tau_1|) |u(\delta_2)|_w 
\]
where we used the estimates on $|u(\tau)|_w$ in terms of $|u(\delta_2)|_w$. This proves the improved H\"older estimate of $u_1$ in the corollary. The estimate on $u_2(\tau) - u_2(-\tau)$ follows similarly. From \eqref{E:CL3}, we have 
\[
\p_\tau \big(\CW(\tau) - \CW(-\tau) \big) = \frac {c_I}{\tau^2 + c_I^2} \big( (\frac{U_2 u_1}{U_1}) (\tau) + (\frac{U_2 u_1}{U_1}) (-\tau)\big).
\]
Thus, the H\"older continuity of $u_1$ implies 
\[
\p_\tau \big(\CW(\tau) - \CW(-\tau) \big) = \frac {2c_I U_2(0) u_1(0)}{U_1(0) (\tau^2 + c_I^2)} + O\left(\frac {|c_I|}{|c_I| + \tau}\log^2 (|c_I| + \tau)\right) |u(\delta_2)|_w.
\]
For $c_I\ne 0$, $\CW$ is smooth and so we are permitted to integrate this identity yielding 
\[\begin{split} 
& \left|\CW(\tau) - \CW (-\tau)  - 2 \frac {U_2 u_1}{U_1 } (0) \tan^{-1}{(\frac {\tau}{c_I})} \right| \le C |c_I||u(\delta_2)|_w\int_0^\tau 
\frac {\log^2 \big(|c_I| + \tau'\big)}{\big|c_I| + \tau'}  \, d\tau' \\
 \le & C |c_I| |u(\delta_2)|_w \log^2 |c_I| \int_0^{\tau} \frac {1}{|c_I|+\tau'} \, d\tau' \le C |c_I|\log^2 |c_I| \log (1+ \frac \tau{|c_I|})|u(\delta_2)|_w, 
\end{split}\]
and thus we have proved the desired estimate for $\CW(\tau) - \CW(-\tau)$. Again from \eqref{E:CL3}, we have 
\begin{equation} \label{E:CL4.7} \begin{split}
|\p_\tau \big(\CW(\tau) + \CW(-\tau)\big)| &= 
|\frac {c_I}{\tau^2 + c_I^2} \big((\frac {U_2 u_1}{U_1})(\tau) - (\frac {U_2 u_1}{U_1})(-\tau)\big)| \\
&\le  C \frac{|c_I| \log^2 (|c_I| + \tau)}{\tau +|c_I|} |u(\delta_2)|_w
\end{split}\end{equation}
and the desired estimate on $\CW(\tau) + \CW(-\tau)$ follows similarly. 

Finally, to control $u_3(\tau) - u_3(-\tau)$, we use \eqref{E:CL4.5} and \eqref{E:CL4.7}, along with Lemma \ref{L:CL1} (first with $\tau_1=\tau$ and then $\tau_1 =\delta_2$), and the above estimate on $\CW$, to show 
\[\begin{split}
&|\p_\tau\Big( u_3(\tau) - u_3(-\tau) - \frac{2U_2(0)}{U_1(0)} \big( \CW(-\tau) + \CW(\tau)   \big) \tan^{-1}{(\frac {\tau}{c_I})} \Big)| \\
& \qquad \le  C\big( \log^2 (|c_I| + \tau) |u(\delta_2)|_w + \frac {\tau}{c_I^2 + \tau^2} |u_2(\tau) -u_2(-\tau)| + |u(\tau)|_w\big)\\
&\qquad \le  C\big( \log^2 (|c_I| + \tau) |u(\delta_2)|_w + |u(\tau)|_w\big) \le C\log^2 (|c_I| + \tau) |u(\delta_2)|_w. 
\end{split}\]
Integrating this inequality using \eqref{E:CL4} yields the desired estimate for $u_3$. 
\end{proof}

\subsection{Heuristic limit equation near a singularity of  Rayleigh's equation} 

Consider a family of solutions $(u=(u_1, u_2, u_3)^T, \CW )$ parametrized by $c_I\ne 0$.  As $c_I \to 0\pm$, formally, we expect $(u,\CW)$ to converge to a solution $(V = (V_1,V_2,V_3)^T, \mathcal{W}_*)$ of
\begin{equation} \label{E:CL5} \begin{cases} 
V_{\tau} = 
\begin{pmatrix} \dfrac 2{U_1}  V_{2}\\ \dfrac 1{U_1} \Big(k^2 + \dfrac {U_2 }{\tau} \Big)V_{1}    + \dfrac {V_{3}}{U_1} \\ \dfrac 2{U_1} \Big(k^2 + \dfrac {U_2}{\tau} \Big) V_{2} \end{pmatrix} =: \frac 1\tau A(\tau) V, \\
\CW_{*\tau}  =0
\end{cases}\end{equation}
on the interval $\tau \in [-\delta_1, \delta_2]\backslash\{0\}$. At $\tau=0$, Lemma \ref{L:CL1} and Corollary \ref{C:CL0.5} hint that 
\begin{itemize} 
\item $u_1$ has a continuous limit;
\item $\lim_{\tau \to 0}\lim_{c_I \to 0} (u_2(\tau) - u_2(-\tau))=0$;
\item $\CW$ converges to a piecewise constant function with a jump $\frac {\pi U_2 u_1}{U_1} (0)$ at $\tau=0$; and 
\item $\lim_{\tau \to 0}\lim_{c_I \to 0} (u_3(\tau) - u_3(-\tau))$ exists.
\end{itemize}
Therefore, we impose the following conditions at the singularity $\tau=0$:
\begin{equation} \label{E:CL6} \begin{split}
&\lim_{\tau \to 0+} \big(V_{1, 2} (\tau) - V_{1,2} (-\tau) \big) = 0, \quad \CW_*(0+) - \CW_*(0-) = \sgn{(c_I)} \frac {\pi U_2 (0) V_{1} (0)}{U_1(0)},\\
& \lim_{\tau \to 0+} \big( V_{3} (\tau) - V_{3} (-\tau) \big) 
= \sgn{(c_I)} \frac {\pi U_2 (0)}{U_1(0)} \big( \CW_*(0+) + \CW_*(0-)\big).
\end{split}\end{equation} 
Even though $c_I$ is already taken as $0$ here, we still track its sign as the direction from which the real axis is approached does make a significant difference. 

We begin by analyzing the limiting system which will help us justify the convergence and obtain the necessary error estimates. Since $V$ is decoupled from $\CW$ in the limiting system except at $\tau=0$, most of the work can be first carried out for $V$.  

As a preliminary step, in the following lemma, we perform a coordinate change that makes the $ 1/\tau$ singularity in \eqref{E:CL5} more tractable. 

\begin{lemma} \label{L:CL2}
Assume $U_+ \in C^l$, $l\ge 3$, then there exists $B(\tau) \in M_{3\times 3} (\R)$ ($M_{3\times 3} (\R)$ the space of $3\times 3$ real valued matrices) which is $C^{l-2}$ in $\tau \in [-\delta_1, \delta_2]$ such that $B(0)=I$ and for any solution $V$ of \eqref{E:CL5} for $\tau\ne 0$, $\tilde V:= B(\tau)^{-1} V$ satisfies 
\be \label{V tilde ODE} 
\tilde V_{\tau} = \frac{U_2(0)}{\tau U_1(0)} A_0 \tilde V = \tau^{-1} A(0) \tilde V, \quad \tau\ne 0,
\ee
where 
\[
A_0 = \begin{pmatrix} 0&0&0\\1&0&0\\0&2&0 \end{pmatrix}.
\]
Moreover, there exists $\beta \in (0, 1)$ such that $B$ is $C^{l-3, \beta}$ in its dependence on the parameters $k$ and $s$ for $s$ is close to $s_0$. 
\end{lemma}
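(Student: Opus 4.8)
The plan is to recast the lemma as a single linear matrix ODE and then build its solution by a resonance-free normal-form argument. Substituting $V = B(\tau)\tilde V$ into \eqref{E:CL5} and imposing \eqref{V tilde ODE} on $\tilde V$ forces $B$ to solve
\[
\tau B_\tau = A(\tau) B - B A(0), \qquad B(0) = I, \qquad \tau \in [-\delta_1, \delta_2],
\]
where $A(\tau)$ is the matrix appearing in \eqref{E:CL5}; it is $C^{l-2}$ in $\tau$ and $A(0) = \frac{U_2(0)}{U_1(0)}A_0$. Conversely, any $C^{l-2}$ solution with $B(0) = I$ is invertible for $|\tau|$ small, so it suffices to produce one such $B$.

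The key observation is that $A(0) = \frac{U_2(0)}{U_1(0)}A_0$ is nilpotent (since $A_0^3 = 0$), hence the operator $\operatorname{ad}_{A(0)}\colon M \mapsto A(0)M - MA(0)$ on $M_{3\times3}(\R)$ is nilpotent with spectrum $\{0\}$; in particular $m\,\mathrm{Id} - \operatorname{ad}_{A(0)}$ is invertible for every integer $m \ge 1$, which means the regular singular point $\tau = 0$ has no positive-integer resonances. I would exploit this to construct the Taylor part of $B$ termwise: writing $A(\tau) = \sum_j A_j\tau^j$ (to the order the regularity of $A$ allows) and $B = I + \sum_{m\ge1}B_m\tau^m$, matching powers of $\tau$ in the ODE gives $(m\,\mathrm{Id} - \operatorname{ad}_{A(0)})B_m = \sum_{j=1}^m A_j B_{m-j}$, whose right-hand side depends only on $B_0,\dots,B_{m-1}$; the no-resonance property then determines each $B_m$ uniquely. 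Set $P(\tau) := I + \sum_{m=1}^N B_m\tau^m$ with $N$ as large as the smoothness permits.

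Next I would correct $P$ to an exact solution. Putting $B = P + W$, the remainder satisfies $\tau W_\tau = \operatorname{ad}_{A(0)}(W) + (A(\tau)-A(0))W + G(\tau)$ with $G := AP - PA(0) - \tau P_\tau$, where by construction $G(\tau)$ and $G(\tau)/\tau$ extend continuously to $\tau = 0$ and vanish to high order there, while $(A(\tau)-A(0))/\tau$ is continuous. This is the part I expect to be the main obstacle. The idea is to use the explicit fundamental solution $M(\tau) := \exp(\operatorname{ad}_{A(0)}\ln|\tau|)$ of the model equation $\tau W_\tau = \operatorname{ad}_{A(0)}(W)$, which is a matrix polynomial in $\ln|\tau|$ (finite because $\operatorname{ad}_{A(0)}$ is nilpotent), so $M$ and $M^{-1}$ grow at most polynomially in $\ln|\tau|$ as $\tau \to 0$. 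Setting $W = M(\tau)Z$ converts the equation into $Z_\tau = \tau^{-1}M^{-1}[(A(\tau)-A(0))MZ + G]$, whose coefficient and forcing are now locally integrable near $\tau = 0$ (the $1/\tau$ being absorbed by the vanishing of $A(\tau)-A(0)$ and of $G$, up to harmless powers of $\ln|\tau|$); a contraction in $C^0$ on a short interval about $\tau = 0$ then yields a unique $Z$ with $Z(\tau)\to 0$, hence $W(\tau) \to 0$, so $B := P + MZ$ solves the conjugation equation near $\tau = 0$ with $B(0) = I$, and ordinary ODE continuation extends it to $[-\delta_1,\delta_2]$. The regularity $B \in C^{l-2}$ in $\tau$ I would obtain by differentiating $\tau B_\tau = A(\tau)B - BA(0)$ repeatedly and bootstrapping, using that $A \in C^{l-2}$ and that the no-resonance property keeps the successive $\tau$-derivatives bounded as $\tau \to 0$.

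Finally, for the dependence on the parameters $k$ and $s$ I would run the whole construction with $(k,s)$ carried along. The change of variables $x_2 \leftrightarrow \tau$ and the coefficients $U_1 = U_+'\circ x_2$, $U_2 = U_+''\circ x_2$ depend on $(k,s)$ as smoothly as $U_+ \in C^l$ allows, so the coefficients $B_m(k,s)$ and the fixed point $Z(\tau;k,s)$ inherit that regularity; one derivative is lost --- leaving $C^{l-3}$ --- because $B_N$ involves the top $\tau$-derivative of $A$, and the extra H\"older exponent $\beta \in (0,1)$ is recovered by interpolating the somewhat-better remainder term against the polynomial part. To summarize, the only nontrivial step is the treatment of the remainder equation through the logarithmically singular model solution; everything else is bookkeeping organized around the single fact that $A(0)$ is nilpotent.
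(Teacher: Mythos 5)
Your construction is correct, and it reaches the lemma by a genuinely different route from the paper. The paper removes the $\tau^{-1}$ singularity by appending $\tau$ to the phase space and passing to the variable $\theta=\log|\tau|$, so that $(I,0)$ becomes a fixed point of an augmented autonomous system in $\R^{10}$; the desired $B(\tau)$ is then the one-dimensional unstable manifold of that fixed point (tangent to $(B_1,1)$ with $B_1=\int_0^\infty e^{-\theta}e^{\theta A(0)}A_\tau(0)e^{-\theta A(0)}\,d\theta$), and the $C^{l-2}$ regularity in $\tau$ and the $C^{l-3,\beta}$ dependence on $(k,s)$ are imported from the invariant manifold / unstable fiber theorems. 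You instead treat $\tau B_\tau=A(\tau)B-BA(0)$ as a regular singular point problem: since $A(0)$ is nilpotent, $\operatorname{ad}_{A(0)}$ has spectrum $\{0\}$, so $m\,\mathrm{Id}-\operatorname{ad}_{A(0)}$ is invertible for every integer $m\ge1$ and the Taylor coefficients $B_m$ are determined recursively; the finite-smoothness remainder is then handled by conjugating with the log-polynomial fundamental solution $\exp(\operatorname{ad}_{A(0)}\ln|\tau|)$ and a contraction. The two arguments are two faces of the same structure (your non-resonance at $m=1$ is exactly the paper's spectral gap between the eigenvalue $1$ in the $\tau$-direction and the algebraic growth on $\{\tau=0\}$, and your branch $B\to I$ as $\tau\to0$ is their unstable manifold), but yours is self-contained and yields $B'(0)=(1-\operatorname{ad}_{A(0)})^{-1}A_\tau(0)$ — the same $B_1$ — by explicit recursion rather than by citation. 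What the paper's route buys is that the delicate joint regularity in $(\tau,k,s)$, in particular the $C^{l-3,\beta}$ parameter dependence, comes packaged with the cited theorems; in your version this is the one soft spot: the claim that the fixed point $Z(\cdot;k,s)$ and the top retained Taylor coefficient inherit $C^{l-3,\beta}$ regularity is asserted via an "interpolation" remark rather than proved, and would need a uniform-contraction-with-parameters (fiber contraction) argument to be complete. Likewise the $C^{l-2}$ bootstrap in $\tau$ at $\tau=0$ should be spelled out (differentiating $j$ times produces $(\operatorname{ad}_{A(0)}-j)$ acting on $B^{(j)}$, invertible for all $j\ge1$, which is what makes it go through). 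These are fillable gaps, not errors.
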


It is worth pointing out that $B$ is smooth at $\tau=0$ even though the equation is singular there. 

\begin{proof} 
The proof of the lemma is an application of the invariant manifold theory in dynamical systems. Substituting $V = B(\tau) \tilde V$ into \eqref{E:CL5}, we obtain 
\[
\tilde V_{\tau} = \tau^{-1} B(\tau)^{-1} \big( A(\tau) B(\tau) - \tau B' (\tau) \big) \tilde V .
\]
Comparing this to \eqref{V tilde ODE}, we see that it suffices to find $B \in C^{l-2}$ so that $B(0) = I$ and 
\begin{equation} \label{E:CL7}
B' (\tau) = \tau^{-1} \big(A(\tau) B(\tau) - B(\tau) A(0) \big). 
\end{equation}

To remove the singularity $\tau^{-1}$, we treat $\tau$ as a new dimension in the phase space $\R^{10}$ and consider a nonlinear augmented system in a new independent variable $\theta$:
\begin{equation} \label{E:CL8.0} 
\begin{pmatrix} B\\ \tau \end{pmatrix}_\theta = \begin{pmatrix} A(\tau) B - B A(0) \\ \tau \end{pmatrix} =: G(B, \tau). 
\end{equation}
Clearly solutions of \eqref{E:CL8.0} with $\tau \ne 0$ lead to solutions of \eqref{E:CL7}. 

Notice that $(I, 0)$ is a steady state of the augmented system \eqref{E:CL8.0} and 
\[
DG (I, 0) =  \begin{pmatrix} \CA & A_\tau(0) \\ 0 & 1 \end{pmatrix} \; \text{ where the operator $\CA$ is defined as  } \CA B = A(0)B - B A(0).
\]
One may verify through direct computations that 
\[
e^{\theta \CA} B = e^{\theta A(0)} B e^{-\theta A(0)}.
\]
Since 
\begin{equation} \label{E:CL9}
e^{\frac{U_2(0)\theta}{U_1(0)} A_0}=e^{\theta A(0)} = \begin{pmatrix} 1 & 0 & 0 \\ \dfrac{U_2(0)}{U_1(0)} \theta & 1&0 \\ \dfrac{U_2(0)^2}{U_1(0)^2} \theta^2 & \dfrac{2U_2(0)}{U_1(0)} \theta & 1 \end{pmatrix},
\end{equation}
we infer that the solution map $e^{\theta DG(I, 0)}$ at the steady state $(I, 0)$ has the $9$-dimensional invariant subspaces $\{ \tau =0\} \subset \R^{10}$ and $e^{\theta DG(I, 0)}$ has only algebraic growth in $\theta$ there. In the transversal direction, clearly $1$ is an eigenvalue of $DG(I, 0)$. Motivated by the above formula of $e^{\theta \CA}$, one may verify through a routine calculation that $(B_1 , 1)^T$ is a eigenvector of the eigenvalue $1$ where 
\[
B_1 = \int_0^{+\infty} e^{- \theta+ \theta\CA} A_\tau(0) \,  d\theta =  \int_0^{+\infty} e^{- \theta} e^{\theta A(0)} A_\tau(0) e^{-\theta A(0)} \,  d\theta. 
\]
The convergence of this integral is guaranteed by the fact that $e^{\theta \CA}$ has only algebraic growth. 

Therefore, from the standard invariant manifold theory in dynamical systems \cite{CL88, CLL91}, there exists a one-dimensional $C^{l-2}$ locally invariant unstable manifold of $(I,0)$ which is tangent to $(B_1, 1)$ at $(I, 0)$. The regularity of the unstable manifold follows from the observation that  $G \in C^{l-2}$. The smoothness of the unstable manifolds with respect to external parameters --- $s_0$ and $k$ here --- is like the smoothness of the unstable fibers with respect to base points in the center manifold. Therefore it is $C^{l-3, \beta}$ for some $\beta \in (0, 1)$ \cite{CLL91, BLZ08}. The unstable manifold can be parametrized by $\tau$ as $B= B(\tau)$, and will satisfy $B(0)=I$, $B \in C^{l-2}$. The invariance of the one-dimensional unstable manifold implies that $B(\tau)$ is a solution of \eqref{E:CL7}. Since \eqref{E:CL7} is a linear ODE system, the solution $B(\tau)$ can be extend to $\tau \in [-\delta_1, \delta_2]$. In addition, the tangency of the unstable manifold at $(I, 0)$ to $(B_1, 1)$ implies 
\begin{equation} \label{E:CL10}
B' (0)=  \int_0^{+\infty} e^{-\theta} e^{\theta A(0)} A_\tau(0) e^{-\theta A(0)}\,  dr
\end{equation}
which can be integrated explicitly. This complete the proof. 
\end{proof}


As a corollary, we obtain explicit forms of $\tilde V (\tau) = B(\tau)^{-1} V(\tau)$ for any solution $(V, \CW_*)$ of \eqref{E:CL5} and \eqref{E:CL6}. 

\begin{corollary} \label{C:CL1}
Let $(V, \CW_*)$ be a solution of the limiting system \eqref{E:CL5} satisfying the conditions at the singular point in \eqref{E:CL6}, then 
\begin{enumerate} 
\item $V$ is uniquely determined by constants $(a_1, a_2,  a_3)$ along with either $\CW_*(0-)$ or $\CW_*(0+)$ such that $\tilde V(\tau) = B(\tau)^{-1} V(\tau)$ is given by 
\begin{gather*} \tilde V_1 (\tau) = a_1, \qquad \tilde V_2 (\tau) = a_1 \frac{U_2(0)}{U_1(0)} \log |\tau| + a_2, \\
 \tilde V_3 (\tau) = a_1 \frac{U_2(0)^2}{U_1(0)^2} \log^2 |\tau| + 2 a_2 \frac{U_2(0)}{U_1(0)} \log |\tau| + a_3 \pm a_{3}'
\end{gather*}
where the $\pm$ is chosen so that $\pm \tau >0$, and 
\[
a_{3}' 
=\sgn{(c_I)} \frac {\pi U_2 (0)}{2U_1(0)} \big( \CW_*(0+) + \CW_*(0-)\big);
\]
moreover, $V_1(0)= \tilde V_1(0)= a_1$;
\item there exists a constant $C>0$ depending only on $k$, $|U_+|_{C^3([0, h_+])}$, $|U_+(s_0)|^{-1}$, and $\delta$ such that 
\[
\frac 1C \big( |V(-\delta_1)| + |\CW_*(-\delta_1)|  \big) \le |V(\delta_2)| + |\CW_*(\delta_2)| \le C\big( |V(-\delta_1)| + |\CW_*(-\delta_1)|  \big)
\]
 for any solution of \eqref{E:CL5}--\eqref{E:CL6}.
\end{enumerate}
\end{corollary}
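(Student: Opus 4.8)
The plan is to push everything through the explicit normal form provided by Lemma~\ref{L:CL2}. Write $\lambda := U_2(0)/U_1(0)$ (well defined and finite since $U_+'(s_0)\ne 0$). Given a solution $(V,\CW_*)$ of \eqref{E:CL5}--\eqref{E:CL6}, Lemma~\ref{L:CL2} says $\tilde V := B(\tau)^{-1}V$ solves $\tilde V_\tau = (\lambda/\tau)A_0\tilde V$ separately on $\tau>0$ and on $\tau<0$. Because $A_0$ is nilpotent this integrates componentwise --- $\tilde V_1$ is constant, $\tilde V_2$ is affine in $\log|\tau|$, $\tilde V_3$ is quadratic in $\log|\tau|$ --- or, equivalently, $\tilde V(\tau) = e^{\lambda A_0\log|\tau|}a^{\pm}$ on $\pm\tau>0$ with constant vectors $a^{\pm} = (a_1^{\pm},a_2^{\pm},a_3^{\pm})^T$, using the exponential computed in \eqref{E:CL9}. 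This already produces the displayed form of $\tilde V$, up to identifying the six constants through the matching conditions.

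The next step is to convert \eqref{E:CL6} into algebraic relations among the $a_i^{\pm}$ and $\CW_*(0\pm)$. The key elementary point is that $B$ is $C^1$ (being $C^{l-2}$ with $l\ge3$) with $B(0)=I$, so $B(\tau)-I=O(\tau)$, while the formulas above give $|\tilde V(\tau)|=O(\log^2|\tau|)$; hence $V(\tau)-\tilde V(\tau)=(B(\tau)-I)\tilde V(\tau)=O(\tau\log^2|\tau|)\to 0$ as $\tau\to0$ from either side. Therefore the one-sided limits of $V_j(\tau)-V_j(-\tau)$ agree with those of $\tilde V_j(\tau)-\tilde V_j(-\tau)$, which the explicit formulas make transparent: $\tilde V_1(\tau)-\tilde V_1(-\tau)=a_1^+-a_1^-$, and once $a_1^+=a_1^-$ the logarithmic terms in $\tilde V_{2,3}$ cancel, leaving $a_2^+-a_2^-$ and $a_3^+-a_3^-$. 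Substituting into \eqref{E:CL6} forces $a_1^+=a_1^-=:a_1$, $a_2^+=a_2^-=:a_2$, and $a_3^+-a_3^-=\sgn{(c_I)}\tfrac{\pi U_2(0)}{U_1(0)}\big(\CW_*(0+)+\CW_*(0-)\big)$; setting $a_3:=\tfrac12(a_3^++a_3^-)$ and $a_3':=\tfrac12(a_3^+-a_3^-)$ gives exactly the stated expressions, and the same computation shows $\lim_{\tau\to0}V_1(\tau)=a_1$, i.e. $V_1(0)=a_1$. Since \eqref{E:CL5} makes $\CW_*$ piecewise constant and \eqref{E:CL6} prescribes its jump in terms of $V_1(0)=a_1$, the pair $(V,\CW_*)$ is recovered from $(a_1,a_2,a_3)$ and one of $\CW_*(0\pm)$; conversely, any such data yields a genuine solution, because $V\mapsto B^{-1}V$ (together with the identity on $\CW_*$) is a bijection between solution sets --- indeed $\det B$ solves a homogeneous scalar linear ODE with $\det B(0)=1$, hence $\det B\equiv1$ and $B(\tau)$ is always invertible. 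This establishes part (1).

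For part (2), read the endpoint values off part (1): $V(\delta_2)=B(\delta_2)\,e^{\lambda A_0\log\delta_2}\,(a_1,a_2,a_3^+)^T$, $V(-\delta_1)=B(-\delta_1)\,e^{\lambda A_0\log\delta_1}\,(a_1,a_2,a_3^-)^T$, and $\CW_*(\delta_2)=\CW_*(0+)$, $\CW_*(-\delta_1)=\CW_*(0-)$. On the compact interval $[-\delta_1,\delta_2]$ the matrix $B$ is $C^{l-2}$ with $\det B\equiv1$, so $B$ and $B^{-1}$ have norms bounded by a constant depending only on $k$, $|U_+|_{C^3}$, $|U_+'(s_0)|^{-1}$ and $\delta$; the same holds for the unipotent factors $e^{\lambda A_0\log\delta_{1,2}}$. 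Hence $|V(\delta_2)|$ is comparable to $|(a_1,a_2,a_3^+)|$ and $|V(-\delta_1)|$ to $|(a_1,a_2,a_3^-)|$, with such constants. It then remains to see that $|(a_1,a_2,a_3^+)|+|\CW_*(0+)|$ and $|(a_1,a_2,a_3^-)|+|\CW_*(0-)|$ are comparable, which follows at once from the linear relations $\CW_*(0+)=\CW_*(0-)+\sgn{(c_I)}\tfrac{\pi U_2(0)}{U_1(0)}a_1$ and $a_3^{\pm}=a_3\pm a_3'$ with $a_3'=\sgn{(c_I)}\tfrac{\pi U_2(0)}{2U_1(0)}\big(\CW_*(0+)+\CW_*(0-)\big)$: these express $(a_3^+,\CW_*(0+))$ as a bounded linear function of $(a_1,a_3^-,\CW_*(0-))$ and vice versa, with constants governed by $|U_+''(s_0)|/|U_+'(s_0)|$. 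Combining the comparabilities gives the asserted two-sided bound.

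I do not anticipate a serious obstacle: the corollary is mostly the bookkeeping that turns Lemma~\ref{L:CL2} into a usable normal form. The one place needing genuine (if brief) care is the cancellation of $(B(\tau)-I)\tilde V(\tau)$ at $\tau=0$ despite the logarithmic blow-up of $\tilde V$ --- this is what lets \eqref{E:CL6} collapse to the finite list of algebraic constraints on $(a_1^\pm,a_2^\pm,a_3^\pm,\CW_*(0\pm))$ --- together with, in part (2), carefully tracking the dependence of the constant on the listed quantities.
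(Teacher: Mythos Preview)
Your proof is correct and follows essentially the same route as the paper's: both arguments use the normal form from Lemma~\ref{L:CL2} to write $\tilde V(\tau)=e^{(\log|\tau|)A(0)}a^{\pm}$ on each half-line, exploit $B(\tau)=I+O(|\tau|)$ to transfer the matching conditions \eqref{E:CL6} from $V$ to $\tilde V$ (yielding $a_{1,2}^+=a_{1,2}^-$ and the stated jump in $a_3^{\pm}$), and for part (2) read off the endpoint values from the explicit parametrization together with the boundedness of $B$ and $B^{-1}$. Your treatment is slightly more explicit in places---in particular the observation that $\det B\equiv 1$ (which follows from $\mathrm{tr}\,A(\tau)\equiv 0$ in Liouville's formula) and the linear-algebraic comparison of $(a_3^{\pm},\CW_*(0\pm))$---but the ideas coincide.
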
   

\begin{proof}  From Lemma \ref{L:CL2}, there exist $(a_{1\pm}, a_{2\pm}, a_{3\pm})$, such that 
\[
\tilde V(\tau) = e^{(\log |\tau|) A(0)} (a_{1\pm}, a_{2\pm}, a_{3\pm})^T, \quad \pm \tau >0.
\]
Since $B(0)=I$ and $B(\tau)$ is $C^1$ in $\tau$, we have for $0 < \tau \ll 1$,
\[
V(\tau) -V(-\tau) = B(\tau) \tilde V(\tau) - B(-\tau) \tilde V(-\tau) = \tilde V(\tau) - \tilde V(-\tilde \tau) + O(|\tau| \log^2 |\tau|) 
\]
which implies 
\[
\lim_{\tau \to 0+} \big(V (\tau) - V (-\tau) \big) = \lim_{\tau \to 0+} \big(\tilde V (\tau) - \tilde V (-\tau) \big). 
\]

From the boundary condition \eqref{E:CL6} and the explicit form \eqref{E:CL9} of $e^{r A(0)}$ it is easy to see that $a_{1,2+}=a_{1,2-}$ and the jump condition on $a_{3\pm}= a_3 \pm a_3'$ holds. For $V_1(0)$, since $B(\tau) = I+O(|\tau|)$, we have 
\[
V_1(0) = \lim_{\tau \to 0} V_1(\tau) = \lim_{\tau \to 0} (\tilde V_1(\tau) + O(|\tau| \log^2|\tau|) =a_1. 
\]

Part (2) is a simple consequence of Lemma \ref{L:CL1} and part (1) of this corollary. In fact, 
one can first identify parameters $a_{1,2}$ as well as $\big(a_3 - a_3', \CW_*(0-)\big)$ 
by the values of $(V, \CW_*)$ and $B$ at $-\delta_1$. 
Consequently, condition \eqref{E:CL6} allows us compute the values of $\CW_*(0+) - \CW_*(0-)$ and thus $a_3'$ and $a_3$ using $V_1(0)=\tilde V_1(0) = a_1$. These parameters completely determine the solution on the interval $[-\delta_1, \delta_2]$. Therefore,  the estimates on $V$ and $\CW_*$ follow from the explicit forms of $\tilde V$ and $\CW_*$ and the smoothness of $B$. 
\end{proof} 

Before we finish the analysis on the solutions to the limit system \eqref{E:CL5}--\eqref{E:CL6}, we consider the conservation law \eqref{E:CL1}. Observe that, at this stage, we cannot yet conclude $V_2^2 + \CW_*^2 - V_1 V_3=0$ because $(V, \CW_*)$ is only assumed to be a local solution for $\tau \in [-\delta_1, \delta_2]$. 

\begin{lemma} \label{L:CL2.5}
Let $(V, \CW_*)$ be a solution of \eqref{E:CL5}--\eqref{E:CL6} for $\tau \in [-\delta_1, \delta_2]$. 
\begin{enumerate} 
\item Conservation law \eqref{E:CL1} holds in the sense that
\[
(V_2)^2 + \CW_*^2 - V_1 V_3 =const \; \text{ on } [-\delta_1, \delta_2].
\]
\item Suppose $(V_2)^2 + \CW_*^2 - V_1 V_3=0$ and $V_1(0)=0$, then $(V, \CW_*)$ is smooth on $[-\delta_1, \delta_2]$ and takes the form 
\[
V (\tau) = B(\tau) (0, 0, a_3)^T, \quad \CW_*(0-) = \CW_*(0+)=0.
\]
\end{enumerate}
\end{lemma}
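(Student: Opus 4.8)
The plan is to establish both assertions from the explicit description of the solutions of \eqref{E:CL5}--\eqref{E:CL6} provided by Corollary \ref{C:CL1}, combined with the algebraic structure of the limiting $V$-system. For part (1), put $Q := (V_2)^2 + \CW_*^2 - V_1 V_3$ and let $M$ denote the symmetric matrix of the quadratic form $V_2^2 - V_1 V_3$. Away from $\tau = 0$, $(V, \CW_*)$ solves \eqref{E:CL5}, and a one-line computation shows that the coefficient matrix $\mathcal{A}(\tau) := \tau^{-1} A(\tau)$ of the $V$-equation satisfies $\mathcal{A}^T M + M\mathcal{A} = 0$ (equivalently, $M\mathcal{A}$ is antisymmetric); since $\CW_{*\tau} \equiv 0$, this gives $Q_\tau = 0$ on each of $(-\delta_1, 0)$ and $(0, \delta_2)$. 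Thus $Q$ equals a constant $Q_+$ on $(0, \delta_2)$ and a constant $Q_-$ on $(-\delta_1, 0)$, and the entire content of part (1) is the identity $Q_+ = Q_-$.

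To establish this I compute the two one-sided limits of $Q$ at $\tau = 0$. Writing $V = B(\tau) \tilde V(\tau)$ with $B(\tau) = I + O(|\tau|)$ (Lemma \ref{L:CL2}) and $\tilde V$ given by the explicit polylogarithmic formulas of Corollary \ref{C:CL1}, the quantity $V_2^2 - V_1 V_3 = \tilde V^T (B^T M B) \tilde V$ differs from $\tilde V^T M \tilde V$ by $O(|\tau|)\, O(\log^4 |\tau|) \to 0$, while $\tilde V^T M \tilde V = \tilde V_2^2 - \tilde V_1 \tilde V_3 = a_2^2 - a_1(a_3 \pm a_3')$ is already constant, the $\log$ and $\log^2$ contributions cancelling identically. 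Hence $Q_\pm = a_2^2 - a_1(a_3 \pm a_3') + \CW_*(0\pm)^2$, so $Q_+ - Q_- = -2 a_1 a_3' + \big(\CW_*(0+)^2 - \CW_*(0-)^2\big)$. Finally the jump relations recorded in Corollary \ref{C:CL1}, inherited from \eqref{E:CL6} --- namely $\CW_*(0+) - \CW_*(0-) = \sgn(c_I)\, \frac{\pi U_2(0)}{U_1(0)} V_1(0)$ with $V_1(0) = a_1$, together with $a_3' = \sgn(c_I)\, \frac{\pi U_2(0)}{2 U_1(0)} \big(\CW_*(0+) + \CW_*(0-)\big)$ --- multiply to give $\CW_*(0+)^2 - \CW_*(0-)^2 = 2 a_1 a_3'$, whence $Q_+ = Q_-$. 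This proves part (1).

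For part (2), suppose the constant in part (1) is $0$ and that $V_1(0) = a_1 = 0$. Substituting $a_1 = 0$ into the formula $Q_\pm = a_2^2 - a_1(a_3 \pm a_3') + \CW_*(0\pm)^2$ just obtained yields $a_2^2 + \CW_*(0\pm)^2 = 0$; since all of these quantities are real, $a_2 = 0$ and $\CW_*(0+) = \CW_*(0-) = 0$, i.e.\ $\CW_* \equiv 0$. Then $a_3' = \sgn(c_I)\, \frac{\pi U_2(0)}{2 U_1(0)} \big(\CW_*(0+) + \CW_*(0-)\big) = 0$. Plugging $a_1 = a_2 = a_3' = 0$ into the representation of Corollary \ref{C:CL1} gives $\tilde V(\tau) \equiv (0, 0, a_3)^T$, hence $V(\tau) = B(\tau)(0, 0, a_3)^T$; since $B$ is $C^{l-2}$ on all of $[-\delta_1, \delta_2]$, including at $\tau = 0$ where the $\tau^{-1}$ singularity is removable (Lemma \ref{L:CL2}), the pair $(V, \CW_*) = \big(B(\tau)(0,0,a_3)^T,\, 0\big)$ has the claimed smoothness.

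The crux --- and the only genuinely delicate point --- is the matching step in part (1): one must show that $Q$ does not jump across the singularity even though $\CW_*$ itself jumps there and $V_2$, $V_3$ blow up (logarithmically, respectively like the square of a logarithm). This cancellation is no accident; it is exactly what the conditions \eqref{E:CL6} were constructed to enforce, the leading singular parts of $V_2$ and $V_3$ being symmetric in $\tau$. The explicit solution formulas of Corollary \ref{C:CL1} are needed precisely in order to separate, within $V_2^2 - V_1 V_3$, the combinations that survive the limit $\tau \to 0$ from those that cancel.
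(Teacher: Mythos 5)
Your proof is correct and follows essentially the same route as the paper: verify $\partial_\tau\big(V_2^2+\CW_*^2-V_1V_3\big)=0$ away from $\tau=0$ by direct computation on \eqref{E:CL5}, then match the one-sided limits at $\tau=0$ using the explicit form of $\tilde V$ from Corollary \ref{C:CL1} together with the jump conditions \eqref{E:CL6}, and for part (2) read off $a_1=a_2=\CW_*(0\pm)=0$ (hence $a_3'=0$) from the vanishing of the conserved quantity. You merely make explicit two points the paper leaves as ``straightforward'' --- the antisymmetry of $M\mathcal{A}$ and the cancellation $Q_+-Q_-=-2a_1a_3'+\CW_*(0+)^2-\CW_*(0-)^2=0$ --- both of which check out.
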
 

\begin{proof}
From \eqref{E:CL5}, it is easy to verify through direct computation that $\p_\tau \big( V_2^2 + \CW_*^2 - V_1 V_3\big) =0$ for $\tau \ne 0$, and thus we only need to show 
\[
\big( V_2^2 + \CW_*^2 - V_1 V_3\big)(0-) = \big( V_2^2 + \CW_*^2 - V_1 V_3\big) (0+). 
\]
Again since $B(\tau) = I + O(|\tau|)$ implies $V(\tau) = \tilde V(\tau) + O(|\tau| \log^2 |\tau|)$, it suffices to prove the above identity for $(\tilde V, \CW_*)$, which is straightforward from the explicit form of $\tilde V$ given in part (1) of  Corollary \ref{C:CL1}.

Suppose $V_1(0)=0$, then Corollary \ref{C:CL1} implies $a_1=0$ and thus 
\[
\tilde V(\tau) = (0, a_2, \frac{2a_2 U_2(0)}{U_1(0)} \log|\tau| + a_3 \pm a_3')^T. 
\]
The smoothness of $B$ and the fact that $B(0)=I$ then give  
\[
V(\tau) = (0, a_2, \frac{2a_2 U_2(0)}{U_1(0)} \log|\tau| + a_3 \pm a_3')^T + O(\left|\tau \log |\tau|\right|), \quad \textrm{for } |\tau|\ll 1.
\]
Substituting this into the assumption $V_2^2 + \CW_*^2 = V_1 V_3$, we obtain
\[
a_2^2 + \CW_*^2 + O(\left|\tau \log |\tau|\right|) = O(|\tau| \log^2 |\tau|).
\]
From this we conclude that $a_2=\CW_*(0-)=\CW_*(0+)=0$, and hence part (2) follows. 
\end{proof}

\subsection{Convergence estimates near a singularity of Rayleigh's equation}

Operating under the assumptions in \eqref{E:CLA2}, we now derive an error estimate comparing solutions of \eqref{E:CL3} and the limiting system \eqref{E:CL5}--\eqref{E:CL6} for $0<|c_I| \ll 1$. The following lemma gives some preliminary bounds. 

\begin{lemma} \label{L:CL3} 
Given any solution $(u, \CW )$ of  \eqref{E:CL3} on $[-\delta_1, \delta_2]$ such that $|u(-\delta_1)|_w \le 1$, let $(\underline V, \underline  \CW_*)$ be the unique  solution of \eqref{E:CL5}--\eqref{E:CL6} so that 
\[
\underline \CW_*(0-) + \underline \CW_*(0+) = 2 \CW(0), \quad \underline  V(-\delta_1) = u(-\delta_1).
\]
Then we have 
\[
|u_1 (\tau) - \underline V_1(\tau)| \le C |c_I| \log^2 |c_I|, \quad \tau \in [-\delta_1, \delta_2], 
\]
and 
\[ 
|u(\delta_2) - \underline V(\delta_2)|  \le C |c_I|^\alpha, \quad |\CW (\delta_2) - \underline \CW_* (0+)| + |\CW (-\delta_1) - \underline \CW_* (0-)| \le C |c_I| \left| \log |c_I| \right|^3.
\]
Or, alternatively, we may assume $|u(\delta_2)|_w \le 1$ and the initial condition $\underline{V}(\delta_2) = u(\delta_2)$, which will give the same inequalities except with $\delta_2$ replaced by $-\delta_1$.  \end{lemma}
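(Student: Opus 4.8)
\emph{Plan of proof.} The idea is to conjugate both systems by the smoothing matrix $B(\tau)$ of Lemma~\ref{L:CL2} and compare. Writing $\tilde u:=B(\tau)^{-1}(u_1,u_2,u_3)^T$ and $\tilde V:=B(\tau)^{-1}\underline V$, the limiting system \eqref{E:CL5} becomes $\tilde V_\tau=\tau^{-1}A(0)\tilde V$ for $\tau\ne0$, which is solved explicitly in Corollary~\ref{C:CL1}: $\tilde V_1\equiv a_1$ is globally constant, $\tilde V_2,\tilde V_3$ grow like $\log|\tau|$ and $\log^2|\tau|$, and the jump across $\tau=0$ is fixed by \eqref{E:CL6}. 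Conjugating \eqref{E:CL3} the same way gives
\[
\tilde u_\tau=\tfrac{\tau}{\tau^2+c_I^2}\,A(0)\,\tilde u+R(\tau)\tilde u+S(\tau)\CW,
\]
and, because $B(0)=I$ and $A(0)=\tfrac{U_2(0)}{U_1(0)}A_0$ is nilpotent with vanishing first row and vanishing third column, one checks $|R(\tau)|\le Cc_I^2/(\tau^2+c_I^2)$ and $S(\tau)=\tfrac{2c_I}{\tau^2+c_I^2}\sigma(\tau)$ with $\sigma$ Lipschitz, $\sigma(0)=\tfrac{U_2(0)}{U_1(0)}e_3$, and $|S_1(\tau)|\le C|c_I\tau|/(\tau^2+c_I^2)$. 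Set $L:=1+|\log c_I|$. Two inputs are used throughout: Lemma~\ref{L:CL1} gives $|\tilde u(\tau)|,|\CW(\tau)|\le CL^2$ on $[-\delta_1,\delta_2]$; and Corollary~\ref{C:CL0.5} gives the refined near-symmetric behaviour near $\tau=0$, in particular that $u_1$ is H\"older--$\alpha$ with constant $O((|c_I|+|\tau|)^{1-\alpha}L^2)$ and $\CW(\tau)=\CW(0)+\tfrac{U_2(0)u_1(0)}{U_1(0)}\tan^{-1}(\tau/c_I)+O(|c_I|L^3)$.

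First I would prove part~(1). Since $A(0)$ annihilates the first coordinate, $\tilde u_1$ obeys $\tilde u_{1\tau}=R_1\tilde u+S_1\CW$ with no singular term; using $\int|R_1|\le C|c_I|$, $\int|S_1|\le C|c_I|L$ and $|\tilde u|,|\CW|\le CL^2$ yields $|\tilde u_1(\tau)-\tilde u_1(-\delta_1)|\le C|c_I|L^2$, while $\tilde u_1(-\delta_1)=\tilde V_1(-\delta_1)$ equals the globally constant value $a_1$ of $\tilde V_1$. Transferring back through $u_1-\underline V_1=B_{11}(\tilde u_1-\tilde V_1)+\sum_{j=2,3}B_{1j}(\tilde u_j-\tilde V_j)$ and using $|B_{1j}(\tau)|\le C|\tau|$ for $j=2,3$, the last two terms are bounded by $C|\tau|\,|\tilde u_j-\tilde V_j|$: for $|\tau|\le|c_I|$ this is $\le C|\tau|(L^2+|\log\tau|^{2})\le C|c_I|L^2$, and for $|\tau|\ge|c_I|$ the explicit forms make $\tilde u_j-\tilde V_j$ a bounded multiple of $\log(1+c_I^2/\tau^2)$ (times at most one power of $L$) plus $O(|c_I|L^2)$, so again $C|\tau|\,|\tilde u_j-\tilde V_j|\le C|c_I|L^2$. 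This gives $|u_1-\underline V_1|\le C|c_I|L^2$ uniformly on $[-\delta_1,\delta_2]$.

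For the endpoint estimate I would represent $\tilde u$ by Duhamel against the fundamental solution $e^{\phi(\tau)A(0)}$ of its model part, $\phi(\tau)=\tfrac12\log(\tau^2+c_I^2)$ (a polynomial in $\phi$, by nilpotency). The key observation is $e^{-\phi A(0)}S(\tau)\CW(\tau)=\tfrac{2c_I}{\tau^2+c_I^2}\big(\tfrac{U_2(0)}{U_1(0)}\CW(\tau)e_3+O(|\tau|L^2|\CW|)\big)$, since $A(0)e_3=0$, so the large factor $e^{-\phi A(0)}$ does not touch the leading part of the source; inserting the Corollary~\ref{C:CL0.5} expansion of $\CW$ and using $\tfrac{2c_I}{\tau^2+c_I^2}\tan^{-1}(\tau/c_I)=\partial_\tau(\tan^{-1}(\tau/c_I))^2$, the source integral is seen to equal, up to $O(|c_I|L^3)$, exactly the jump $2a_3'=\sgn{(c_I)}\tfrac{\pi U_2(0)}{U_1(0)}\cdot2\CW(0)$ that \eqref{E:CL6} imposes on $\tilde V_3$ --- and this is where the normalization $\underline\CW_*(0-)+\underline\CW_*(0+)=2\CW(0)$ enters. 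To handle the remaining discrepancy I would split $[-\delta_1,\delta_2]$ into a singular layer $|\tau|\le|c_I|^\beta$ (fixed $\beta<1$) and its complement: on the complement \eqref{E:CL3} is a genuine perturbation of \eqref{E:CL5}, the coefficient mismatch $\tfrac{\tau}{\tau^2+c_I^2}-\tfrac1\tau$ and the stray coupling $\tfrac{c_I}{\tau^2+c_I^2}$ each having integral $O(|c_I|^{1-\beta})$ there, so a Gronwall comparison (the fundamental solution of \eqref{E:CL5} grows only polylogarithmically) transfers control; inside the layer, Corollary~\ref{C:CL0.5} --- whose error terms carry a factor $\tau^\alpha$, hence $O(|c_I|^{\alpha\beta})$ at $\tau=\pm|c_I|^\beta$ --- keeps the two solutions comparable. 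Optimizing the layer width and relabelling $\alpha$ gives $|u(\delta_2)-\underline V(\delta_2)|\le C|c_I|^\alpha$.

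Finally, the two $\CW$-bounds come from integrating $\CW_\tau=\tfrac{c_I U_2}{(\tau^2+c_I^2)U_1}u_1$ once more: replacing $u_1$ by $\underline V_1$ costs only $O(|c_I|L^2)$ by part~(1) and the H\"older bound, the remaining integral is matched to $\underline\CW_*(0\pm)$ via the Corollary~\ref{C:CL0.5} decomposition of $\CW$ and the normalization as above, and the error is $O(|c_I|L^3)=O(|c_I||\log c_I|^3)$. The alternative form of the statement follows verbatim by integrating from $\tau=\delta_2$ rather than $\tau=-\delta_1$. The step I expect to be the main obstacle is the inner--outer matching in the endpoint estimate: since the coefficient $1/\tau$ of \eqref{E:CL5} is not integrable at $0$, \eqref{E:CL3} cannot be treated as a global perturbation of \eqref{E:CL5}, and one must bridge the singular layer using the a priori bounds of Lemma~\ref{L:CL1} and the near-symmetry of Corollary~\ref{C:CL0.5} while tracking the polylogarithmic amplification so that every error contribution collapses to the asserted rate.
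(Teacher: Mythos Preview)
Your approach is correct and shares the paper's key ingredients---conjugation by the smoothing matrix $B(\tau)$ of Lemma~\ref{L:CL2}, the nilpotency of $A(0)$ and the resulting exact fundamental solution, and the refined a~priori bounds of Corollary~\ref{C:CL0.5}---but the organization of the endpoint estimate is genuinely different.  The paper does \emph{not} run Duhamel for $\tilde u$ directly across $\tau=0$.  Instead it introduces the symmetric error
\[
Z(\tau):=\tilde u(\tau)-\tilde u(-\tau)-[\text{known jump of }\underline{\tilde V}],\qquad \tau\ge 0,
\]
which vanishes at $\tau=0$ for any $c_I\ne 0$, derives an ODE $Z_\tau=\tau^{-1}A(0)Z+\text{forcing}$ on $(0,\delta_0]$, and applies Duhamel with the \emph{unregularized} propagator $e^{(\log\tau)A(0)}$.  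The forcing is controlled by Corollary~\ref{C:CL0.5}, and the normalization $\underline\CW_*(0+)+\underline\CW_*(0-)=2\CW(0)$ enters in exactly the way you identified.  Your alternative---Duhamel for $\tilde u$ itself with the regularized phase $\phi(\tau)=\tfrac12\log(\tau^2+c_I^2)$, then matching the source integral to $2a_3'$ via $\tfrac{2c_I}{\tau^2+c_I^2}\tan^{-1}(\tau/c_I)=\partial_\tau(\tan^{-1}(\tau/c_I))^2$---is valid and arguably more direct, since it avoids defining $Z$.  The paper's route has the advantage that the singular propagator $e^{(\log\tau)A(0)}$ is exactly that of the limit equation, so the comparison to $\underline{\tilde V}$ is automatic; your route trades this for a smoother propagator and a slightly more involved endpoint comparison.

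Two remarks on execution.  First, the inner--outer splitting with a $|c_I|^\beta$ layer that you flag as ``the main obstacle'' is in fact unnecessary: once your Duhamel with $e^{\phi A(0)}$ is in hand and the source is matched to $2a_3'e_3+O(|c_I|L^3)$, the endpoint estimate follows directly (the paper's ``outer'' region is just $[-\delta_1,\delta_2]\setminus[-\delta_0,\delta_0]$ with the fixed $\delta_0=\min\{\delta_1,\delta_2\}$, which is trivial).  Second, your ordering---proving the uniform $u_1$ bound first, then the endpoint---introduces a mild circularity: the transfer back through $u_1-\underline V_1=\sum_j B_{1j}(\tilde u_j-\tilde V_j)$ requires the bound $|\tau|\,|\tilde u_j-\tilde V_j|\le C|c_I|L^2$ for $j=2,3$, which is essentially a byproduct of the endpoint analysis.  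The paper avoids this by proving the endpoint estimate first and the refined $u_1$ bound afterward; you should reorder accordingly.
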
 

Note that in general $u - \underline{V}$ does satisfy the error estimate on the whole interval $[-\delta_1, \delta_2]$ as $u_1 - \underline{V}_1$ does. This is natural since $u_3$ and $\CW$ are smooth for $0< |c_I|$ while $\underline{V}_3$ and $\underline{\CW}_*$ are singular at $\tau=0$. 

\begin{proof} 
Since the linear transformation through $B(\tau)$ simplifies the equation for $V$, which is basically the principle part of the equation satisfied by $(u, \CW)$, we apply the same transformation to the latter by letting $u = B(\tau) \tilde u$. One may compute 
\begin{equation} \label{E:CL11} \begin{cases}
\tilde u_\tau = \dfrac{U_2(0)}{\tau U_1(0)} A_0 \tilde u -\dfrac{c_I^2 U_2}{\tau (\tau^2 + c_I^2) U_1} B^{-1} A_0 B \tilde u + B^{-1} \left(0, 0, \dfrac {2c_I U_2 \CW}{(\tau^2 + c_I^2) U_1}\right)^T \\
\CW_\tau = \dfrac {c_I U_2 u_1}{(\tau^2 + c_I^2) U_1}
\end{cases} \end{equation} 
where $B$ and $U_{1,2}$ are evaluated at $\tau$ unless otherwise specified. 

Ideally, one might try to control the error $\tilde u - \underline {\tilde V} = B(\tau)^{-1} (u - \underline V)$ and $\CW -\underline  \CW_*$. However,  $\tilde u_3$ and $\CW$ are continuous at $\tau=0$ for $c_I\ne 0$, while $\underline {\tilde V}_3$ and $\underline \CW_*$ are not, and this discrepancy makes it inconvenient to estimate $\tilde u - \underline {\tilde V}$ and $\CW - \underline \CW_*$ directly. Instead, we consider the modified error functions $(Z = (Z_1, Z_2, Z_3)^T, \mathbf{W})$ defined for $\tau>0$ by
\begin{equation} \label{E:CL12} \begin{split} 
Z_{1,2}  &: = \tilde u_{1,2}  - \underline {\tilde V}_{1,2}- \left(\tilde u_{1,2} (-\tau) - \underline {\tilde V}_{1,2} (-\tau) \right) \\ &= \tilde u_{1,2}  - \tilde u_{1,2} (-\tau)\\
Z_3  &:= \tilde u_3 - \underline {\tilde V}_3  - \frac {2U_2(0) \underline{\CW}_*(0+)}{U_1(0)} \big(\tan^{-1}(\frac \tau{c_I})  -  \frac \pi2 \sgn{(c_I)}\big) \\
&\qquad  - \left(\tilde u_3 (-\tau) - \underline {\tilde V}_3 (-\tau) -  \frac {2U_2(0) \underline{\CW}_*(0-)}{U_1(0)} \big(- \tan^{-1}{( \frac \tau{c_I})} + \frac \pi2 \sgn{(c_I)}\big) \right)\\
& = \tilde u_3 - \tilde u_3 (-\tau)  - \frac {2U_2(0) }{U_1(0)} \big(\underline \CW_*(0+) +\underline  \CW_*(0-)\big) \tan^{-1}{(\frac \tau{c_I})}  \\
\BW &:= \CW  - \underline \CW_* (0+) + \CW (-\tau) - \underline \CW_* (0-) \\ &= \CW  + \CW (-\tau) - 2 \CW(0),
\end{split}\end{equation}
where all functions are evaluated at $\tau$ unless otherwise stated.  Here we have used the property that $\underline {\tilde V}_{j}(\tau) - \underline {\tilde V}_{j}(-\tau) = 0$ for $j =1, 2$, and, for $j = 3$, is a specified constant given by Corollary \ref{C:CL1}. For any $c_I \ne 0$, $(Z, \BW)$ is smooth for $\tau\ge 0$ and 
\[
|Z(0)|= |\BW(0)|=0. 
\]

From Corollary \ref{C:CL0.5}, we have 
\[
|\BW(\tau)| \le C |c_I| \log^2 |c_I| \log(1+ \frac\tau{|c_I|}).
\]

The next step is to estimate $Z(\tau)$.  Towards that end, we compute that
\[
Z_\tau (\tau) = \tilde u_\tau(\tau) + \tilde u_\tau(-\tau)  - \Big(0, 0, \frac {2 c_I U_2(0)}{U_1(0) (c_I^2 +\tau^2)} \big(\underline \CW_*(0+) + \underline \CW_*(0-) \big) \Big)^T.
\]
Recalling \eqref{E:CL11}, this becomes 
\[\begin{split}
Z_\tau (\tau)&= \frac{U_2(0)}{\tau U_1(0)} A_0 \big( \tilde u (\tau)  - \tilde u (-\tau) \big)  -\frac{c_I^2 }{\tau (\tau^2 + c_I^2)}  \Big(\big( \frac {U_2}{U_1} B^{-1} (0, u_1, 2u_2)^T \big) (\tau) \\
& \quad - \big( \frac {U_2}{U_1} B^{-1} (0, u_1, 2u_2)^T \big) (-\tau) \Big) + \frac {2c_I}{\tau^2 + c_I^2} \Big( \big(B^{-1} (0, 0, \frac {U_2 \CW}{U_1})^T\big)(\tau) \\
&\quad - \big(0, 0, \frac {U_2(0)\underline  \CW_*(0+)}{U_1(0)}\big)^T  + \big(B^{-1} (0, 0, \frac {U_2 \CW}{U_1})^T\big)(-\tau) - \big(0, 0, \frac {U_2(0)\underline  \CW_*(0-)}{U_1(0)} \big)^T \Big).
\end{split}\]
Since $A_0 (0, 0, 1)^T=0$, we see that, for $\tau\ge 0$, $Z$ solves
\[\begin{split}
Z_\tau (\tau) &= \frac{U_2(0)}{\tau U_1(0)} A_0 Z (\tau)+ \frac {2c_I U_2(0)}{U_1(0)(\tau^2 + c_I^2)} \big(0, 0, \BW (\tau)\big)^T +\phi_1(\tau) + \phi_2(\tau),
\end{split}\]
where $\phi_{1,2}$ are given by  
\[
\begin{split} 
\phi_1(\tau) &:=  \frac {2c_I}{\tau^2 + c_I^2} \Big(\big(B^{-1} (0, 0, \frac {U_2 \CW}{U_1})^T\big)(\tau) + \big(B^{-1} (0, 0, \frac {U_2 \CW}{U_1})^T\big)(-\tau) \\
&\qquad - \big(0, 0, \frac {U_2(0)}{U_1(0)} (\CW(\tau) + \CW(-\tau))\big)^T \Big) \\
&= O\big(|c_I| \tau (\tau^2 +c_I^2)^{-1}\big),
\end{split} \]
and 
\[
\begin{split} 
\phi_2(\tau) &:= -\frac{c_I^2 }{\tau (\tau^2 + c_I^2)}  \Big(\big( \frac {U_2}{U_1} B^{-1} (0, u_1, 2u_2)^T \big) (\tau) - \big( \frac {U_2}{U_1} B^{-1} (0, u_1, 2u_2)^T \big) (-\tau) \Big)\\ 
&= c_I^2 O\big( \tau^{\alpha-1} (\tau +|c_I|)^{-(1+\alpha)} \log^2 (|c_I| + \tau)\big) = c_I^2 O\big( \tau^{\alpha-1} (\tau +|c_I|)^{-(1+\alpha)} \log^2 |c_I|\big).
\end{split}\]
Here we used the smoothness of $B$ and Corollary \ref{C:CL0.5}. 

From the equation for $Z_\tau$ and the fact that $e^{rA_0} (0,0,1)^T = (0,0,1)^T$ due to \eqref{E:CL9}, we have 
\[\begin{split}
Z(\tau) &= e^{\frac{U_2(0)}{ U_1(0)} (\log \frac \tau{\tau_0}) A_0} Z(\tau_0) + \int_{\tau_0}^\tau \Big[ \frac {2c_I U_2(0)}{U_1(0)\big((\tau')^2 + c_I^2\big)}\big(0,0, \BW (\tau')\big)^T   \\
&\qquad  +e^{ \frac{U_2(0)}{ U_1(0)} (\log \frac \tau{\tau'}) A_0} \big( \phi_1(\tau') + \phi_2(\tau')\big) \Big] \   d\tau'.
\end{split}\]
Employing the estimates of $\phi_{1,2}$ and $\BW$ derived above, as well as the explicit expression   \eqref{E:CL9} for $e^{rA(0)}$, we then find that  for $\tau>0$
\[\begin{split}
|Z(\tau)| &\le C (1+|\log \frac \tau{\tau_0}|^2) |Z(\tau_0)| +C \int_{\tau_0}^\tau \Big[ \frac{c_I^2 \log^2 |c_I| \log (1+\frac {\tau'}{|c_I|})  }{(|c_I| + \tau')^2} \\
&\qquad + (1+|\log \frac \tau{\tau'}|^2) \big(\frac{ |c_I| \tau'}{(|c_I| + \tau')^2} + \frac{c_I^2 \log^2 |c_I|}{ (\tau')^{1-\alpha}(|c_I| + \tau')^{1+\alpha}}\big)\Big] \,  d\tau'. 
\end{split}\]
When $c_I\ne0$, $Z$ is smooth for $\tau\ge 0$ and thus $Z(\tau_0) = O(|\tau_0|)$ as $\tau_0\to0+$.  This implies
\[\begin{split}
|Z(\tau)| &\le C \int_{0}^\tau \left[\frac{c_I^2 \log^2 |c_I| \log (1+\frac {\tau'}{|c_I|})  }{(|c_I| + \tau')^2} + (1+|\log \frac \tau{\tau'}|^2) \big(\frac{ |c_I| \tau'}{(|c_I| + \tau')^2} + \frac{c_I^2 \log^2 |c_I|}{ (\tau')^{1-\alpha}(|c_I| + \tau')^{1+\alpha}}\big)  \right] \, d\tau'\\
& \le  C |c_I| \int_{0}^{\frac \tau{|c_I|}}  \frac{\tau' + \log^2 |c_I| \log (1+\tau')  }{(1 + \tau')^2} + \frac {\log^2 |c_I|}{(\tau')^{1-\alpha}(1+ \tau')^{1+\alpha}} \\
& \qquad  + \tau^{\frac \alpha2} (|c_I|\tau')^{-\frac \alpha2} 
\big( \frac {\tau'}{(1+\tau')^2} + \frac {\log^2 |c_I| }{(\tau')^{1-\alpha}(1+ \tau')^{1+\alpha}}    \big) \, d\tau'.
\end{split}\]
Integrating the above inequality gives
\[
|Z(\tau)| \le  C |c_I | \big( \log^2 |c_I| +\log (1+\frac \tau{|c_I|}) \big) + C \tau^{\frac \alpha2} |c_I|^{1-\frac \alpha2} \log^2 |c_I|\le C |c_I|^\alpha, 
\]
for $0\le \tau \le \delta_0 := \min\{\delta_1, \delta_2\}$. 
Recalling the definition of $Z$, and using the fact that 
\[ \tan^{-1}{(\frac{\delta_0}{c_I})} - \sgn{(c_I)} \frac \pi2 = O(|c_I|),\]
 we can turn the above inequality into an estimate for $\tilde u$ and $ \underline{\tilde V}$: 
\[
|\tilde u (\delta_0) - \underline {\tilde V} (\delta_0)| \le |\tilde u (-\delta_0) - \underline {\tilde V} (-\delta_0)| + C |c_I|^\alpha.
\]
 Since the system \eqref{E:CL3} is a regular perturbation of \eqref{E:CL5} on $[-\delta_1, \delta_2]\backslash [-\delta_0, \delta_0]$, with the difference in their coefficients being of order $O(|c_I|)$, and noting that we have already established that $\underline {V} (-\delta_1) = u(-\delta_{1})$ and $B(\tau)$ is bounded, it is simple to show that the above inequality implies the desired estimate on $u(\delta_2) - \underline V(\delta_2)$. 

Next, we refine the bound on $u_1$. Observe that the first row of the matrix $A_0$ vanishes. Therefore,  for $\tau\in [-\delta_1, \delta_2]$, equation \eqref{E:CL11}, the property $B(0)=I$, and Lemma \ref{L:CL1} together imply 
\[
|\tilde u_{1\tau}| \le  \frac {C c_I^2}{c_I^2 + \tau^2} |u(\tau)| \le \frac {C c_I^2 \log^2 (|c_I| + |\tau|)}{c_I^2 + \tau^2}.
\]
Arguing as we did before when we integrated $Z_\tau$ in the previous paragraph, we obtain 
\[
|\tilde u_1(\tau) - \tilde u_1(-\delta_1)| \le C |c_I| \log^2 |c_I|. 
\]
From Corollary \ref{C:CL1}, $\underline {\tilde V}_1$ is a constant and thus $\underline {\tilde V}(-\delta_1) =\tilde u(-\delta_1)$.  These facts, together with the boundedness of $B$ and the above inequality lead to the stated inequality for $u_1 - \underline V_1$.

Finally, we complete the proof of the lemma by deriving the estimate on $\CW(\delta_2)$. In fact, the bounds on $\CW$ in Corollary \ref{C:CL0.5}  give
\begin{equation} \label{E:CL13}
|2\CW(\tau) - \frac {2 U_2u_1}{U_1}(0) \tan^{-1}{( \frac \tau{c_I})} - 2 \CW(0) | \le C |c_I| \log^2 |c_I| \log(1+ \frac\tau{|c_I|}). 
\end{equation}
Moreover, the definition of $\underline \CW_*$ and the jump condition on $\underline \CW_*$ at $\tau=0$ given in \eqref{E:CL6} show that 
\[
|\CW(\tau) - \underline \CW_*(0+) - \frac {U_2}{U_1}(0) \big( u_1(0) \tan^{-1}{(\frac \tau{c_I})}  - \sgn{(c_I)} \frac {\pi}{2} \underline V_1(0)\big) | \le C |c_I| \log^2 |c_I| \log(1+ \frac\tau{|c_I|}). 
\]
Thus the estimate on $u_1 - \underline V_1$ implies the desired estimate on $\CW(\delta_2) - \underline \CW_*(0+)$. The analogous inequality for $\CW(-\delta_1) - \underline \CW_*(0-)$ is obtained similarly. 
\end{proof}

Notice that $\underline \CW_*$ in Lemma \ref{L:CL3} does not have the same initial value as $\CW$ at $\tau=-\delta_1$. This can be fixed easily by using Corollary \ref{C:CL1}, as we show in the next corollary. 

\begin{corollary} \label{C:CL2}
Given any solution $\big(u, \CW \big)$ of  \eqref{E:CL3} on $[-\delta_1, \delta_2]$ such that $|u(-\delta_1)|_w \le 1$, let $(V, \CW_*)$ be the unique  solution of \eqref{E:CL5}--\eqref{E:CL6} so that 
\[
(V, \CW_*) (-\delta_1) = (u, \CW) (-\delta_1).
\]
Then we have 
\[
|u_1 (\tau) - V_1(\tau)| \le C |c_I| |\log |c_I||^3, \quad \tau \in [-\delta_1, \delta_2], 
\]
and 
\[ 
|u(\delta_2) -  V(\delta_2)|  \le C |c_I|^\alpha, \quad |\CW (\delta_2) -  \CW_* (0+)| 
\le C |c_I| \left| \log |c_I| \right|^3.
\]
Or, alternatively, assume $|u(\delta_2)|_w \le 1$ and the initial condition on $(V, \CW_*)$ at $\tau= \delta_2$.  Then the same estimate holds only at $\tau = -\delta_1$ rather that $\delta_2$. 
\end{corollary}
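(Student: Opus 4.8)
The plan is to reduce this to Lemma~\ref{L:CL3} by comparing two solutions of the limiting system \eqref{E:CL5}--\eqref{E:CL6}: the one, call it $(\underline V,\underline\CW_*)$, furnished by that lemma --- which has $\underline V(-\delta_1)=u(-\delta_1)$ and is normalized by $\underline\CW_*(0-)+\underline\CW_*(0+)=2\CW(0)$ --- and the solution $(V,\CW_*)$ of the present corollary, which matches $(u,\CW)$ entirely at $-\delta_1$. On $[-\delta_1,0)$ the $V$-equation in \eqref{E:CL5} is decoupled from $\CW_*$, so $V$ and $\underline V$ agree there by uniqueness; consequently, in the notation of Corollary~\ref{C:CL1}, the two limit solutions share the same parameters $a_1,a_2$, the same value of the $\tau<0$ constant $a_3-a_3'$, and in particular $V_1(0)=\underline V_1(0)=a_1$. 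They can therefore differ only through $\CW_*(0-)$ and the jump parameter $a_3'$, equivalently through the ``sum'' $\CW_*(0-)+\CW_*(0+)$.

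First I would evaluate that sum for $(V,\CW_*)$. Since $\CW_*$ is piecewise constant and the jump condition in \eqref{E:CL6} gives $\CW_*(0+)=\CW_*(0-)+\sgn{(c_I)}\frac{\pi U_2(0)}{U_1(0)}V_1(0)$, one has $\CW_*(0-)=\CW(-\delta_1)$ and $\CW_*(0-)+\CW_*(0+)=2\CW(-\delta_1)+\sgn{(c_I)}\frac{\pi U_2(0)}{U_1(0)}V_1(0)$, so the two sums differ by $2\big(\CW(-\delta_1)-\CW(0)\big)+\sgn{(c_I)}\frac{\pi U_2(0)}{U_1(0)}V_1(0)$. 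The crucial point is a near-cancellation here. Combining the two estimates of Corollary~\ref{C:CL0.5} at $\tau=\delta_0:=\min\{\delta_1,\delta_2\}$ gives $\CW(-\delta_0)-\CW(0)=-\sgn{(c_I)}\frac{\pi}{2}\frac{U_2(0)u_1(0)}{U_1(0)}+O\big(|c_I|\,|\log|c_I||^3\big)$ (using $\tan^{-1}(\delta_0/c_I)=\sgn{(c_I)}\frac\pi2+O(|c_I|)$ and $|u_1(0)|\le C$), and integrating the regular $\CW$-equation from \eqref{E:CL3} over $[-\delta_1,-\delta_0]$, where $|\CW_\tau|\le C|c_I|$ because $|\tau|\ge\delta_0$ and $u_1$ is bounded, upgrades this to $\CW(-\delta_1)-\CW(0)=-\sgn{(c_I)}\frac{\pi}{2}\frac{U_2(0)u_1(0)}{U_1(0)}+O\big(|c_I|\,|\log|c_I||^3\big)$. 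Hence the difference of the two sums equals $\sgn{(c_I)}\frac{\pi U_2(0)}{U_1(0)}\big(V_1(0)-u_1(0)\big)+O\big(|c_I|\,|\log|c_I||^3\big)$, which is $O\big(|c_I|\,|\log|c_I||^3\big)$ by the bound $|u_1(0)-\underline V_1(0)|\le C|c_I|\log^2|c_I|$ of Lemma~\ref{L:CL3} together with $V_1(0)=\underline V_1(0)$.

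Since $a_3'$, the value $\CW_*(0-)$, and (because $a_3-a_3'$ coincides) also $a_3$ are each controlled by this sum difference, all the parameters of Corollary~\ref{C:CL1} for the two limit solutions agree up to $O\big(|c_I|\,|\log|c_I||^3\big)$. The explicit formulas there for $\tilde V$ and $\CW_*$, together with the uniform boundedness of $B(\tau)$ from Lemma~\ref{L:CL2}, then yield $\sup_{[-\delta_1,\delta_2]}|V-\underline V|+|\CW_*(0\pm)-\underline\CW_*(0\pm)|\le C|c_I|\,|\log|c_I||^3$. The corollary now follows by the triangle inequality: adding these bounds to $|u_1-\underline V_1|\le C|c_I|\log^2|c_I|$, $|u(\delta_2)-\underline V(\delta_2)|\le C|c_I|^\alpha$, and $|\CW(\delta_2)-\underline\CW_*(0+)|\le C|c_I|\,|\log|c_I||^3$ from Lemma~\ref{L:CL3}, and absorbing lower-order terms, gives precisely the asserted estimates. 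The ``alternatively'' statement is obtained by running the same argument from the endpoint $\delta_2$, invoking the corresponding clauses of Lemma~\ref{L:CL3} and Corollary~\ref{C:CL0.5}.

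I expect the only genuine difficulty to be the near-cancellation in the second paragraph: one has to read off the leading $\tan^{-1}$-behavior of $\CW$ near the singular point with an error no worse than $O\big(|c_I|\,|\log|c_I||^3\big)$, so that $2\big(\CW(-\delta_1)-\CW(0)\big)$ is seen to cancel, to that precision, the jump contribution $\sgn{(c_I)}\frac{\pi U_2(0)}{U_1(0)}V_1(0)$; everything else is bookkeeping with the parametrization from Corollary~\ref{C:CL1} and an application of Lemma~\ref{L:CL3}.
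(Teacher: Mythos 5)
Your proposal is correct and follows essentially the same route as the paper: both compare $(V,\CW_*)$ with the solution $(\underline V,\underline\CW_*)$ of Lemma \ref{L:CL3}, reduce the discrepancy to the $\CW_*$ data at $\tau=0$, and obtain the bound $C|c_I||\log|c_I||^3$ from the near-cancellation between $\CW(-\delta_1)-\CW(0)$ and the half-jump $\sgn{(c_I)}\tfrac{\pi U_2(0)u_1(0)}{2U_1(0)}$ supplied by Corollary \ref{C:CL0.5} (i.e.\ \eqref{E:CL13}), together with $|u_1(0)-\underline V_1(0)|\le C|c_I|\log^2|c_I|$, before concluding via Corollary \ref{C:CL1} and the triangle inequality. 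Your version is slightly more explicit than the paper's (tracking the parameters $a_1,a_2,a_3,a_3'$ and extending the estimate from $-\delta_0$ to $-\delta_1$ by integrating the regular part of the $\CW$-equation), but the underlying argument is the same.
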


\begin{proof} 
Denote by $(\underline V, \underline \CW_*)$ the solution to \eqref{E:CL5}--\eqref{E:CL6} defined in Lemma \ref{L:CL3} and put $\underline {\tilde V} := B^{-1} \underline V$. On the one hand, from \eqref{E:CL6} and Corollary \ref{C:CL1}, we have 
\begin{align*}
\underline \CW_* (0-) &= \frac 12 \big(\underline \CW_* (0+) + \underline \CW_* (0-)\big) - \frac 12\big( \underline \CW_* (0+) -  \underline \CW_* (0-)\big) \\
&= \CW(0) - \sgn{(c_I)} \frac {\pi \underline a_1 U_2(0)}{2U_1(0)},
\end{align*}
where $\underline a_1 := \underline V_1(0) = \underline {\tilde V}_1(0)$. On the other hand, \eqref{E:CL13} and Corollary \ref{C:CL1} imply
\[
|\CW(-\delta_1) - \CW(0) + \sgn{(c_I)} \frac {\pi U_2 u_1}{2 U_1}(0)|\le C |c_I| \left| \log |c_I| \right|^3.
\]
Furthermore, using the estimate on $u_1 - \underline V_1$ given in Lemma \ref{L:CL3}, we see that
\[
|\CW_* (0-) - \CW(0) + \sgn{(c_I)} \frac {\pi \underline a_1 U_2}{2 U_1}(0)|\le C |c_I| \left| \log |c_I| \right|^3. 
\]
Therefore, combining these observations, one arrives at 
\[
|\CW_*(-\delta_1) - \underline \CW_*(-\delta_1)| \le  C |c_I| \left| \log |c_I| \right|^3,
\]
and thus the corollary follows from Lemma \ref{L:CL3} and Corollary \ref{C:CL1}. 
\end{proof} 

Changing from the $\tau$ variable back to the $x_2$ variable, we obtain the other form of the limit system for $(u_*, \CW_*)(x_2) = (V, \CW_*)(\tau(x_2, s))$ on $x_2 \in [s_0-\delta, s_0+\delta]\backslash \{s_0\}$:
\begin{equation} \label{E:CL14} \begin{cases} 
u_{*1}' = 2  u_{*2}\\
u_{*2}' =  \left(k^2 + \dfrac {U_+''}{U_+ - \realpart{c}} \right)u_{*1}    + u_{*3} \\
u_{*3}' = 2 \left(k^2 + \dfrac {U_+''}{U_+ - \realpart{c}} \right) u_{*2}\\
\CW_*' = 0 
\end{cases}\end{equation}
where we recall $\realpart{c} = U_+(s)$. To show that the correct conditions hold at $x_2 =s$, we note that  for $|x_2'| \ll 1$
\[
\tau(s\pm x_2', s) = U_+(s\pm x_2') - U_+(s) = \pm U_+'(s) x_2' + O(|x_2'|^2),
\]
so $u_* := (u_{*1}, u_{*2}, u_{*3})^T$ satisfies 
\[\begin{split}
 u_* (s+ x_2') - u_* (s-x_2') &= V\big(\tau(s+ x_2', s) \big) - V\big(\tau(s-x_2', s)\big)\\
&= \tilde V\big(\tau(s+ x_2', s) \big) - \tilde V\big(\tau(s-x_2', s)\big) + O(|x_2'| \log^2 |x_2'|).
\end{split}\]
Then, since, 
\[
|\tilde V_\tau| = O(|\tau|^{-1}  \left|\log |\tau|\right|), \qquad \tau(s-x_2', s) + \tau(s+ x_2', s) = O(|x_2'|^2) = O(\tau^2),
\] 
we may conclude
\begin{equation} \label{E:CL15}\left\{ \begin{aligned}
&\lim_{x_2' \to 0+} \big(u_{*1, 2} (s+x_2') - u_{*1,2} (s-x_2') \big) = 0, \\ 
&\CW_*(s+) - \CW_*(s-) =  \sgn{(c_I)}\frac {\pi U_+'' (s) u_{*1} (s)}{|U_+'(s)|},\\
& \lim_{x_2' \to 0+} \big( u_{*3} (s+x_2') - u_{*3} (s-x_2') \big) =\sgn{(c_I)}  \frac {\pi U_+'' (s)}{|U_+'(s)|} \big( \CW_*(s+) + \CW_*(s-)\big).
\end{aligned} \right. \end{equation}

At last, we can state the following result on the convergence of solutions of \eqref{E:CL2} to those of the limiting system \eqref{E:CL14}--\eqref{E:CL15}.  This is simply a rephrasing of the previous corollary in light of the change of variable computations above.

\begin{proposition} \label{P:CL1}
Given any solution $\big(u, \CW \big)$ of  \eqref{E:CL2} on $[s-\delta, s+\delta]$ such that $|u(s+\delta)|_w \le 1$, there exists a unique solution $(u_*, \CW_*)$ of (\ref{E:CL14}--\ref{E:CL15}) so that 
\[
(u_*, \CW_*) (s+\delta) = (u, \CW) (s+\delta).
\]
In addition we have 
\[
|u_1 (\tau) - u_{*1}(\tau)| \le C |c_I| |\log |c_I||^3, \quad \tau \in [s-\delta, s+\delta], 
\]
and 
\[ 
|u(s-\delta) -  u_* (s-\delta)|  \le C |c_I|^\alpha, \quad |\CW (s-\delta) -  \CW_* (s-)| 
\le C |c_I| \left| \log |c_I| \right|^3.
\]
Alternatively, we may assume $|u(s+\delta)|_w \le 1$ and impose the initial condition on $(u_*, \CW_*)$ at $x_2=s-\delta$.  Then, one has the estimates above at $x_2 = s+ \delta$ rather than $x_2 = s-\delta$.
\end{proposition}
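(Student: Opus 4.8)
The plan is to obtain Proposition~\ref{P:CL1} directly from Corollary~\ref{C:CL2} by undoing the substitution $\tau=\tau(x_2,s)=U_+(x_2)-U_+(s)$ that converts \eqref{E:CL2} into \eqref{E:CL3}. First I would record the elementary properties of this substitution: after possibly shrinking $\delta$ so that the bounds \eqref{E:CLA2} hold on a neighborhood of $[s_0-\delta,s_0+\delta]$ containing $[s-\delta,s+\delta]$, the map $x_2\mapsto\tau$ is a $C^3$ diffeomorphism of $[s-\delta,s+\delta]$ onto an interval, say $[-\delta_1,\delta_2]$, with $\delta_{1,2}=O(\delta)$ and Jacobian $U_+'(x_2)$ bounded above and below by fixed multiples of $|U_+'(s_0)|$. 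All the lemmas of the preceding subsections apply verbatim with $[-\delta_1,\delta_2]$ taken to be this image, since their proofs use only \eqref{E:CLA2} and $\delta_{1,2}=O(\delta)$. Under the substitution, \eqref{E:CL2} is carried exactly to \eqref{E:CL3}, \eqref{E:CL14} to \eqref{E:CL5}, the normalization $|u(s+\delta)|_w\le1$ becomes the corresponding normalization at one endpoint of $[-\delta_1,\delta_2]$, and the two endpoints $s\pm\delta$ are sent to $\{-\delta_1,\delta_2\}$ in an order determined by $\sgn{(U_+'(s))}$; so the two forms of the proposition match the two forms of Corollary~\ref{C:CL2}.

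The one point that genuinely needs checking is that the conditions \eqref{E:CL15} at $x_2=s$ are exactly the pullback of the conditions \eqref{E:CL6} at $\tau=0$. This is the computation already carried out just before \eqref{E:CL15}: from $\tau(s\pm x_2',s)=\pm U_+'(s)x_2'+O(|x_2'|^2)$, the bound $|\tilde V_\tau|=O(|\tau|^{-1}|\log|\tau||)$ of Corollary~\ref{C:CL1}, and $B(\tau)=I+O(|\tau|)$, one gets that replacing $V$ by $\tilde V=B(\tau)^{-1}V$ and symmetrizing in $x_2'$ changes things by only $O(|x_2'|\log^2|x_2'|)$, so the one-sided limits and jumps of $(V,\CW_*)$ at $\tau=0$ translate precisely into the one-sided limits and jumps of $(u_*,\CW_*)=(V,\CW_*)(\tau(\cdot,s))$ at $x_2=s$. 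In particular the existence and uniqueness of $(u_*,\CW_*)$ with prescribed endpoint data is just a restatement of Corollary~\ref{C:CL1}(1).

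It then remains to push the three estimates of Corollary~\ref{C:CL2} through the diffeomorphism. Because $\tau(\cdot,s)$ and its inverse have $C^1$ norms controlled by $|U_+|_{C^3([0,h_+])}$ and $|U_+'(s_0)|^{-1}$, and $\delta_{1,2}=O(\delta)$, the interior bound $|u_1(\tau)-V_1(\tau)|\le C|c_I||\log|c_I||^3$ becomes $|u_1-u_{*1}|\le C|c_I||\log|c_I||^3$ on $[s-\delta,s+\delta]$ with a constant of the same dependence; likewise the bounds $|u-V|\le C|c_I|^\alpha$ and $|\CW-\CW_*(0\pm)|\le C|c_I||\log|c_I||^3$ at the $\tau$-endpoint opposite to the one carrying the initial data become the stated bounds at the corresponding $x_2$-endpoint, after identifying $\CW_*(0\pm)$ with $\CW_*(s\mp)$ via \eqref{E:CL15} (the sign depending on $\sgn{(U_+'(s))}$). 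Taking the initial data at the other endpoint gives the alternative form. Uniformity in $k$ on compact subsets of $\R^+$ and in $s$ near $s_0$ is inherited automatically, as every constant in Lemmas~\ref{L:CL1}, \ref{L:CL3} and Corollaries~\ref{C:CL0.5}, \ref{C:CL1}, \ref{C:CL2} already enjoys it.

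Thus there is no new analysis here, and the main ``obstacle'' is purely organizational. The only spots where care is required are (i) verifying that the singular matching data at $\tau=0$ transforms to \eqref{E:CL15} with no extra error term, handled by the Taylor expansion of $\tau(s\pm x_2',s)$ together with the logarithmic bound on $\tilde V_\tau$, and (ii) checking that neither the constants nor the powers of $|\log|c_I||$ degrade under the change of variables, which holds because $U_+'$ is bounded away from zero on $[s-\delta,s+\delta]$.
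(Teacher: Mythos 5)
Your proposal is correct and follows exactly the route the paper takes: Proposition \ref{P:CL1} is presented there as "simply a rephrasing of the previous corollary [Corollary \ref{C:CL2}] in light of the change of variable computations above," and those computations — the Taylor expansion $\tau(s\pm x_2',s)=\pm U_+'(s)x_2'+O(|x_2'|^2)$ together with the logarithmic bound on $\tilde V_\tau$ showing that \eqref{E:CL6} pulls back to \eqref{E:CL15} — are precisely the ones you identify as the only substantive step. Your write-up is, if anything, more explicit than the paper about the bookkeeping (orientation of the endpoints under $\sgn{(U_+'(s))}$, shrinking $\delta$, and uniformity of the constants), but the argument is the same.
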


\subsection{Linear instabilities due to critical layers} \label{SS:Instability}

Suppose $c_*\in \mathbb{R}$ is a regular value of $U_+$ on $[0, h_+]$ satisfying \eqref{E:CLA1}.  As in the statement of Theorem \ref{T:Instability}, let 
\[ U_+^{-1}(\{c_*\}) =: \{s_1 < \ldots <s_m\} \subset (0, h_+), \text{ where } U_+'(s_j) \ne 0.
\]
We consider solutions to \eqref{E:CL2} for $c = c_R + i c_I \in \mathbb{C}\backslash \mathbb{R}$ close to $c_*$. Eventually we restrict our attention to $c_* = c_k$, but the following analysis holds for any regular value $c_*$. 

\begin{proposition} \label{P:CL2}
For any $\alpha \in (0, 1)$ and  $k \in \mathbb{N}$, there exist $C, \varepsilon_0, \delta \in (0, 1]$ depending only on $\alpha$, $k$, $|U_+|_{C^3 ([0, h_+])}$, and $\max_j  |U_+'(s_j)|^{-1}$, such that 
\[
0< s_1 -\delta< s_1 +\delta < s_2 -\delta < \ldots < s_m -\delta < s_m+\delta <h_+ 
\]
and the following estimates hold for  any $c = c_R + ic_I$ satisfying $|c-c_*|, |c_I| \in (0, \varepsilon_0)$. Let  $(u, \CW)$ be the solution of \eqref{E:CL2} with the initial condition 
\[
u(h_+)= (0, 0, 1)^T, \quad \CW(h_+)=0
\] 
and let $(u_*, \CW_*)$ be the solution of \eqref{E:CL14} for $x_2 \notin U_+^{-1} (\{c_R\})$ satisfying 
\[
u_*(h_+) = (0, 0, 1)^T, \quad \CW_*(h_+)=0
\]
and \eqref{E:CL15} at all $s  \in U_+^{-1} (\{c_R\})$. Then we have 
\begin{equation} \label{E:CL17}
|u_1(\tau) - u_{1*} (\tau)| \le C |c_I|^\alpha, \quad \tau \in [0, h_+], 
\end{equation}
and  
\begin{equation} \label{E:CL16}
|u- u_*| + |\CW - \CW_*| \le C |c_I|^\alpha, \quad \textrm{on }   [0, h_+] \backslash \bigcup_{j=1}^m (s_j -\delta, s_j + \delta). 
\end{equation}
\end{proposition}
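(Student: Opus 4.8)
The plan is to obtain the global comparison by concatenating the single-layer estimate of Proposition~\ref{P:CL1} across the finitely many critical layers, using elementary linear ODE perturbation theory to bridge the intervals between them. Throughout write $c = c_R + i c_I$ and set $\{s_1' < \cdots < s_m'\} := U_+^{-1}(\{c_R\})$, so that $|s_j' - s_j| = O(|c - c_*|)$. First I would fix $\delta$ and shrink $\varepsilon_0$ so that the closed intervals $J_j := [s_j' - \delta, s_j' + \delta]$ are pairwise disjoint, contained in $(0,h_+) \cap (s_j - 2\delta, s_j + 2\delta)$, and satisfy \eqref{E:CLA2} about each $s_j'$; on the complementary regular intervals $I_m := [s_m'+\delta, h_+]$, $I_j := [s_j'+\delta, s_{j+1}'-\delta]$ for $1 \le j \le m-1$, and $I_0 := [0, s_1'-\delta]$, one then has a uniform lower bound $|U_+ - c_R| \ge c_1 > 0$.

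On each regular interval, the system \eqref{E:CL2} for $(u,\CW)$ is an $O(|c_I|)$ perturbation of the system \eqref{E:CL14} for $(u_*,\CW_*)$: the coefficient $\frac{U_+''(U_+ - c_R)}{(U_+-c_R)^2 + c_I^2}$ differs from $\frac{U_+''}{U_+-c_R}$ by $O(|c_I|^2)$, the extra term $\frac{2 c_I U_+''}{(U_+-c_R)^2+c_I^2}\CW$ in the $u_3$-equation and the entire right-hand side of the $\CW$-equation are $O(|c_I|)$, all by the lower bound on $|U_+ - c_R|$, and $\CW_*$ is constant there. So I would begin the comparison at $x_2 = h_+$, where $(u,\CW) = (u_*,\CW_*) = \big((0,0,1)^T, 0\big)$, and run Gronwall on $I_m$: this yields first a bound $|u|_w \le C$ on $I_m$ --- hence $|u(s_m'+\delta)|_w \le C$, the normalization needed for Proposition~\ref{P:CL1} after dividing by $C$ --- and then $|u - u_*| + |\CW - \CW_*| \le C|c_I|$ on $I_m$.

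Next I would pass downward through $J_m$ and iterate. After the rescaling just mentioned, Proposition~\ref{P:CL1} furnishes a \emph{local} limit solution $\hat u_*$ of \eqref{E:CL14}--\eqref{E:CL15} on $J_m$ matching $(u,\CW)$ at $s_m'+\delta$, with $|u_1 - \hat u_{*1}| \le C|c_I|\,|\log|c_I||^3$ on $J_m$, $|u(s_m'-\delta) - \hat u_*(s_m'-\delta)| \le C|c_I|^\alpha$, and $|\CW(s_m'-\delta) - \hat\CW_*(s_m'-)| \le C|c_I|\,|\log|c_I||^3$. The globally propagated $u_*$ also solves \eqref{E:CL14}--\eqref{E:CL15} on $J_m$ but with data differing from that of $\hat u_*$, by the $O(|c_I|)$ amount just estimated; so $u_* - \hat u_*$ is a solution of the limit system with $O(|c_I|)$ data, and from the explicit form of $\tilde V$ in Corollary~\ref{C:CL1} together with $B - I = O(|\tau|)$ it is $O(|c_I|)$ at the two endpoints of $J_m$ and, in its first component, throughout $J_m$. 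A triangle inequality then transfers the displayed bounds (with the first possibly enlarged to $C|c_I|^\alpha$) from $\hat u_*$ to $u_*$. Iterating this alternation downward --- each regular interval multiplies the running error by a fixed constant and adds $O(|c_I|)$ (Gronwall, using the running bound $|u|_w \le C$), each layer $J_j$ sends an input error $e$ at $s_j'+\delta$ to an output error $\le C(e+|c_I|^\alpha)$ at $s_j'-\delta$, and there are only $m$ layers --- one obtains an overall error $O(|c_I|^\alpha)$ at $x_2 = 0$ and on every regular interval, which is \eqref{E:CL16}; combining the $u_1$-bounds from Proposition~\ref{P:CL1} on each $J_j$ with the $O(|c_I|^\alpha)$ control of $u_{*1} - \hat u_{*1}$ there yields $|u_1 - u_{*1}| \le C|c_I|^\alpha$ on all of $[0,h_+]$, which is \eqref{E:CL17}.

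I expect the main obstacle to be the bookkeeping of the third step: tracking, across each layer, how the $O(|c_I|)$ mismatch at the entry point interacts with the logarithmic and log-squared singularities of the limit solution, so that replacing the locally matched $\hat u_*$ by the globally propagated $u_*$ --- and then chaining $m$ such replacements --- does not degrade the $|c_I|^\alpha$ rate. The remaining ingredients (the regular-interval Gronwall estimates, the uniform lower bound on $|U_+ - c_R|$, the linear rescalings) are routine.
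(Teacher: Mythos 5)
Your proposal is correct and follows essentially the same route as the paper: a downward induction from $x_2=h_+$ that alternates regular-perturbation (Gronwall) estimates on the intervals between critical layers with Proposition~\ref{P:CL1} applied to a locally matched limit solution across each layer, and then uses Corollary~\ref{C:CL1} (linearity of the limit system plus the explicit form of $\tilde V$ and the smoothness of $B$) to transfer the bounds from the locally matched solution to the globally propagated $(u_*,\CW_*)$. The bookkeeping you flag as the main obstacle is exactly how the paper's induction hypothesis is organized, and it closes because there are only finitely many layers.
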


It is clear that the above solution $(u, \CW)$ corresponds to a solution of \eqref{E:yeq1}. Moreover the existence and uniqueness of $(u_*, \CW_*)$ is due to Corollary \ref{C:CL1}. 

\begin{proof}
When $0< \varepsilon \ll 1$, for any $c= c_R + i c_I$ in an $\varepsilon$-neighborhood of $c_*$, there exist  $s_1'< \ldots < s_m' \in (0, h_+)$ close to $s_1< \ldots < s_m$ (with distance of the order of $O(\varepsilon)$) such that 
\begin{equation} \label{E:CL17.1} 
\{s_1', \ldots, s_m' \}= \{s\in [0, h_+] \mid U_+(s) = c_R\}, \quad U_+'(s_j')  = U_+(s_j) + O(\varepsilon) \ne 0.
\end{equation} 
We first fix $\delta>0$ and take $\varepsilon_0$ sufficiently small so that $|s_j' - s_j|\le \delta/2$ and the estimates in Proposition \ref{P:CL1} hold for $s=s_j'$, $1\le j\le m$. 

For notational convenience, let $s_0 :=-\delta$ and $s_{m+1} :=h_+ +\delta$. By induction on $j$ from $j=m$ to $j=0$, we will prove the desired estimates \eqref{E:CL16} for $u-u_*$ and $\CW-\CW_*$ on the interval $[s_j+ \delta, s_{j+1}-\delta]$ and the estimate  \eqref{E:CL17} for $u_1-u_{1*}$ on $[s_j +\delta, s_{j+1}+\delta] \cap [0, h_+]$.  Together, these imply the proposition. 

Consider $j=m$.  Since $(u, \CW)(h_+) =(u_*, \CW_*) (h_+)$ and \eqref{E:CL3} is a regular perturbation of \eqref{E:CL14} on $[s_m+\delta, h_+]$, as $\varepsilon \to 0+$, inequalities \eqref{E:CL16} and \eqref{E:CL17} hold on $[s_m+\delta, h_+]$ automatically, and actually do so with a better bound $C|c_I|$.  

Suppose we have proved \eqref{E:CL16} on $[s_{j'}+ \delta, s_{j'+1}-\delta]$ and \eqref{E:CL17} on $[s_{j'} +\delta, s_{j'+1}+\delta] \cap [0, h_+]$, for $j'= j+1, \ldots, m$ and $0\le j\le m-1$.  We will now prove them for $j'=j$. Let $(\tilde u, \tilde \CW)$ be the solution of \eqref{E:CL14} with $(\tilde u, \tilde \CW)(s_{j+1} + \delta) = (u, \CW)(s_{j+1} +\delta)$. On the one hand, from Proposition \ref{P:CL1} and the induction hypothesis, we first obtain 
\[\begin{split}
&|(u- \tilde u)(s_{j+1}-\delta)| + |(\CW - \tilde \CW)(s_{j+1} -\delta)| \le C |c_I|^\alpha,\\ 
& |(u_1 - \tilde u_1)(\tau)| \le C |c_I|^\alpha, \; \tau \in [s_{j+1}-\delta, s_{j+1}+ \delta]. 
\end{split}\]
On the other hand, Corollary \ref{C:CL1}  and the induction hypothesis imply
\[
|(u_*- \tilde u)(s_{j+1}-\delta)| + |(\CW_* - \tilde \CW)(s_{j+1} -\delta)| \le C |c_I|^\alpha.  
\]
Moreover, the boundedness of $B(\tau)$ established in Lemma \ref{L:CL2}, along with Corollary \ref{C:CL1}, yields 
\[
|(u_{*1}- \tilde u_1)(\tau)| \le C |c_I|^\alpha, \quad \tau \in [s_{j+1}-\delta, s_{j+1}+ \delta]. 
\]
Therefore \eqref{E:CL16} holds at $x_2 =s_{j+1} -\delta$ and \eqref{E:CL17} holds on $[s_{j+1}-\delta, s_{j+1}+ \delta]$.  Finally, since \eqref{E:CL2} is a regular perturbation of \eqref{E:CL14} on $[s_j+\delta, s_{j+1}-\delta]$ as $\varepsilon \to 0+$, inequalities \eqref{E:CL16} and \eqref{E:CL17} hold on $[s_j+\delta, s_{j+1}-\delta]$ and thus we obtain \eqref{E:CL16} on $[s_{j}+ \delta, s_{j+1}-\delta]$ and \eqref{E:CL17} on $[s_{j} +\delta, s_{j+1}+\delta]$. The proof of the proposition is then completed by induction. 
\end{proof}

In order to obtain a solution of \eqref{E:yeq1} and \eqref{E:yeq2}, we need to prove $u_1(0) >0$ which follows from Proposition \ref{P:CL2} and the next lemma. 

\begin{lemma} \label{L:CL4}
Assume $U_+ \in C^4$.  Let $U_+^{-1} (\{c_R\})=\{s_1', \ldots, s_m'\}$ and say that $(u_*, \CW_*)$ is the solution of \eqref{E:CL14} on $[0,h_+] \setminus \{s_1', \ldots, s_m'\}$ that satisfies 
\[
u_*(h_+) = (0, 0, 1)^T, \quad \CW_*(h_+)=0
\]
and \eqref{E:CL15} at any $s  \in U_+^{-1} (\{c_R\})$. Then $\big(u_*(0), \CW_*(0)\big)$ is $C^1$ in $c_R$ for $c_R$ in a neighborhood of $c_*$. Moreover 
\[
u_{*1, 3} (x_2) \ge 0, \;  x_2 \in [0, h_+]; \quad u_{*1} (s_m')>0, \text{ and if } m\ge 2, \text{ then } u_{*1}(s_{m-1}') >0
\]
where $s_j'$, $j=1, \ldots, m$, are defined in \eqref{E:CL17.1}.
\end{lemma}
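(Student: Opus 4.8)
The plan is to transfer everything to the scalar ``limiting Rayleigh function'' $y_*$, since the relevant sign information is essentially invisible at the level of the quadratic variables $(u_*,\CW_*)$. First I would construct $y_*$ directly (this is also the content of Proposition \ref{P:Rayleigh}(1), applied with its regular value taken to be the present $c_R$, which is legitimate since $c_R$ remains a regular value of $U_+$ with $c_R\notin\{U_+(0),U_+(h_+)\}$ for $c_R$ near $c_*$): solve $-y_*''+(\frac{U_+''}{U_+-c_R}+k^2)y_*=0$ on each interval complementary to $U_+^{-1}(\{c_R\})$, with $y_*(h_+)=0$, $y_*'(h_+)=1$, imposing the jump relations \eqref{E:matching} at each $s_j'$. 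A direct computation --- the one that led to \eqref{E:CL14}--\eqref{E:CL15} --- shows that $\big(|y_*|^2,\operatorname{Re}(\bar y_* y_*'),|y_*'|^2,\operatorname{Im}(\bar y_* y_*')\big)$ solves \eqref{E:CL14}--\eqref{E:CL15} with data $(0,0,1,0)$ at $h_+$, so by the uniqueness recorded in Corollary \ref{C:CL1} it equals $(u_{*1},u_{*2},u_{*3},\CW_*)$. The non-negativity of $u_{*1}=|y_*|^2$ and $u_{*3}=|y_*'|^2$ is then immediate (with $u_{*3}=+\infty$ at those $s_j'$ where $y_*'$ exhibits the logarithmic blow-up).

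For the $C^1$-dependence on $c_R$, I would view the map sending $(u_*,\CW_*)$ from $x_2=h_+$ down to $x_2=0$ as a composition: the regular-interval flows of \eqref{E:CL14}, whose coefficient $k^2+\frac{U_+''}{U_+-c_R}$ is jointly smooth in $(x_2,c_R)$ off $U_+^{-1}(\{c_R\})$, alternating with the singular transition maps across the $s_j'$. By Corollary \ref{C:CL1} the latter are explicit linear maps assembled from $B(\pm\delta)$, $e^{(\log\delta)A(0)}$, $\log\delta$ and $U_+''(s_j')/U_+'(s_j')$, all of which are $C^1$ in $c_R$ because $B$ is $C^{l-3,\beta}=C^{1,\beta}$ in its parameters (here $l=4$) and $s_j'=s_j'(c_R)$ is $C^3$ by the implicit function theorem. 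Since $x_2=0$ lies in the regular interval $[0,s_1')$ for $c_R$ near $c_*$, composing these maps yields the stated regularity of $(u_*(0),\CW_*(0))$.

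The heart of the matter --- and the step I expect to be the main obstacle --- is the strict positivity at $s_m'$ and $s_{m-1}'$, since precisely there the Rayleigh coefficient and $u_{*3}$ (i.e.\ $y_*'$) are singular, so no naive maximum principle is available. For $s_m'$: the interval $(s_m',h_+]$ is free of critical layers, hence $y_*$ is real there and $u_{*1}(s_m')=y_*(s_m'+)^2$; if this vanished, $y_*$ would be the Frobenius branch $\sim(x_2-s_m')$ near $s_m'$, with $y_*'$ bounded. Substituting $y_*=(U_+-c_R)\psi_*$ (akin to the substitution in the proof of Lemma \ref{L:necessary}) puts the equation in Sturm--Liouville form $\big((U_+-c_R)^2\psi_*'\big)'=k^2(U_+-c_R)^2\psi_*$ with $\psi_*(h_+)=0$; pairing with $\bar\psi_*$, integrating over $(a,h_+)$, and using $(U_+-c_R)^2\psi_*'\bar\psi_*=y_*'\bar y_*-\frac{U_+'|y_*|^2}{U_+-c_R}$ gives $\int_a^{h_+}(U_+-c_R)^2\big(|\psi_*'|^2+k^2|\psi_*|^2\big)\,dx_2=\left[\,\frac{U_+'|y_*|^2}{U_+-c_R}-y_*'\bar y_*\,\right]_{x_2=a}$, whose right-hand side tends to $0$ as $a\to s_m'+$ because $y_*$ and $U_+-c_R$ vanish linearly there while $y_*'$ stays bounded. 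Hence the nonnegative left-hand side vanishes in the limit, forcing $\psi_*\equiv0$, i.e.\ $y_*\equiv0$ on $(s_m',h_+)$, contradicting $y_*'(h_+)=1$. Thus $u_{*1}(s_m')>0$.

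For $s_{m-1}'$ (when $m\ge2$), if $U_+''(s_m')\ne0$ I would argue as follows: since $\CW_*=\operatorname{Im}(\bar y_* y_*')=0$ on $(s_m',h_+]$, the jump relation in \eqref{E:CL15} makes $\CW_*$ equal, on $(s_{m-1}',s_m')$, to the nonzero constant $-\sgn{(c_I)}\frac{\pi U_+''(s_m')}{|U_+'(s_m')|}u_{*1}(s_m')$; the conservation law $u_{*2}^2+\CW_*^2-u_{*1}u_{*3}\equiv0$ (cf.\ Lemma \ref{L:CL2.5}; the constant is $0$, read off at $h_+$) then forces $u_{*1}u_{*3}$ to be bounded below by a positive constant on $(s_{m-1}',s_m')$, whereas $u_{*1}(s_{m-1}')=0$ would give $u_{*1}u_{*3}=O(|\tau|\log^2|\tau|)\to0$ near $s_{m-1}'+$ by Corollary \ref{C:CL1} --- a contradiction. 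If instead $U_+''(s_m')=0$, then $k^2+\frac{U_+''}{U_+-c_R}$ stays bounded near $s_m'$, so $s_m'$ is only an apparent singularity: $y_*$ is real and $C^2$ across it, and I would repeat the energy argument of the previous paragraph on the largest genuinely critical-layer-free interval abutting $h_+$, now factoring out all zeros of $U_+-c_R$ contained in it. This last degenerate configuration, together with the delicate cancellation of the singular boundary terms in the energy identity, is where I expect the real work to lie.
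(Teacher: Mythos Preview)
Your proposal is correct and is very close to the paper's argument; the only substantive difference is in how non-negativity of $u_{*1,3}$ is obtained. You reconstruct the scalar limit $y_*$ and read off $u_{*1}=|y_*|^2$, $u_{*3}=|y_*'|^2\ge 0$ immediately; the paper instead stays in the quadratic variables and argues from the conservation law \eqref{E:CL18} together with Lemma~\ref{L:CL2.5} that neither $u_{*1}$ nor $u_{*3}$ can change sign (a slightly longer but self-contained route that avoids invoking Proposition~\ref{P:Rayleigh}). For the strict positivity at $s_m'$, your Sturm--Liouville energy identity is exactly the integrated form of the paper's monotone quantity $H(x_2):=u_{*2}-\tfrac{U_+'}{U_+-c_R}u_{*1}$: your boundary term equals $-H(a)$ and your integrand equals $H'$, so the two arguments coincide. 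For $s_{m-1}'$ the paper likewise uses the nonzero jump of $\CW_*$ at $s_m'$ together with Lemma~\ref{L:CL2.5}, matching your non-degenerate case; the degenerate situation $U_+''(s_m')=0$ that you flag is not explicitly spelled out in the paper either (there it is hidden in the line ``$\CW_*(s_m'-)\ne\CW_*(s_m'+)=0$''), and your instinct that this is where care is needed is right --- note however that your proposed fix of ``factoring out all zeros of $U_+-c_R$'' does not work verbatim, since $y_*(s_m')\ne 0$ forces $\psi_*=y_*/(U_+-c_R)$ to have a pole there; the cleanest repair is to observe that when $U_+''(s_m')=0$ the Rayleigh coefficient is bounded, so $(u_*,\CW_*)$ is smooth across $s_m'$ by Corollary~\ref{C:CL1}, and one may simply run the $H$-monotonicity argument on $(s_{m-1}',s_m')$ starting from the finite value $H(s_m'-)$.
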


\begin{proof}
Since the transformation matrix $B(\tau)$ given in Lemma \ref{L:CL2} is smooth in both $\tau$ and the parameter $s$ close to $s_0$ (in terms of the notations in Lemma \ref{L:CL2}), Corollary \ref{C:CL1} implies the smoothness in $c_R$ of $(u_*, \CW_*)$ near (but not at) a singularity. Equation \eqref{E:CL14} is regular away from $\{s_1', \ldots, s_m'\}$ with uniform bounds on the coefficients. Therefore we see that $\big(u_*(0), \CW(0)\big)$ is $C^1$ in $c_R$ near $c_*$. 

To prove $u_{*1,3} \ge 0$, notice Lemma \ref{L:CL2.5} and the boundary conditions for  $(u_*, \CW_*)$ at $h_+$ imply
\begin{equation} \label{E:CL18} 
u_{*1} u_{*3}- u_{*2}^2 - \CW_*^2 \equiv 0 \quad \text{ on } [0, h_+].
\end{equation}
It follows immediately that $u_{*1}$ and $u_{*3}$ can not vanish simultaneously at any $x_2 \notin U_+^{-1} (\{c_R\})$, and this occurs only when the solution is trivial. Conservation law \eqref{E:CL18} actually further implies that, if one of $u_{*1,3} (x_2) =0$ at some $x_2 \notin U_+^{-1} (\{c_R\})$, it does not change sign since the other one does not vanish in a neighborhood. Moreover, since $u_{*1}$ is continuous on $[0, h_+]$, if $u_{*1} (s_j') \ne 0$, then $u_{*3}$ does not change sign near this $s_j'$ due to \eqref{E:CL18}. Finally, if $u_{*1} (s_j')= 0$, Lemma \ref{L:CL2.5} implies $(u_*, \CW_*)$ is smooth near this $s_j'$ and $u_{*3} (s_j')\ne 0$. Therefore $u_{*1}$ does not change sign near $s_j'$ as well. Summarizing the above discussion, we conclude that $u_{*1,3} \ge 0$ on $[0, h_+]$. 

Finally we prove $u_{*1} (s_m') > 0$ and  $u_{*1}(s_{m-1}') > 0$ if $m\ge 2$. Even though we will continue to work in the framework of \eqref{E:CL14}, the calculation is essentially carried out to the form of the equation used in the proof of Lemma \ref{L:necessary} where $\psi = y/{(U_+-c)}$ was considered. Let 
\[
H(x_2) := u_{*2} - \frac {U_+'}{U_+ - c_R} u_{*1}, \quad x_2 \in [0, h_+] \backslash U_+^{-1} (\{c_R\}).
\]
On the one hand, one may compute 
\[\begin{split}
H'(x_2)&=  k^2 u_{*1} + \big( \frac{U_+'}{U_+-c_R}\big)^2 u_{*1} + u_{*3} - \frac{2U_+'}{U_+-c_R} u_{*2} \\
&\ge  k^2 u_{*1} + \big(\left| \frac{U_+'}{U_+-c_R}\right| \sqrt{u_{*1}} - \sqrt{u_{*3}}\big)^2  \ge 0,
\end{split}\]
in view of \eqref{E:CL18}. Indeed, the conservation law \eqref{E:CL18} also implies that the above derivative vanishes only for the trivial solution. Since $H(h_+)=0$, the monotonicity of $H$ implies 
\be \label{E:CL18.5}
\lim_{x_2 \to s_m'+}  H (x_2) \in [-\infty, 0). 
\ee
On the other hand, from \eqref{E:CL18}, it is clear $H(x_2)=0$ if $u_{*1}(x_2)=0$ at some $x_2 \in [0, h_+] \backslash U_+^{-1} (\{c_R\})$. Moreover, suppose $u_{*1}(s_j')=0$ for some $j=1, \ldots, m$, Lemma \ref{L:CL2.5} and \eqref{E:CL14} imply $u_*(x_2)$, and thus $H(x_2)$ as well, is smooth near $x_2=s_j'$. Moreover \eqref{E:CL18} yields $u_{*2}(s_j')= \CW_*(s_j'\pm)=0$. Therefore  $u_{*1}' (s_j') = 2 u_{*2} (s_j') =0$ which leads to $H(s_j')=0$. Consequently, \eqref{E:CL18.5} implies $u_{*1} (s_m')>0$. From \eqref{E:CL15}, we obtain $\CW_*(s_m'-) \ne \CW_*(s_m+) =0$. Again, Lemma \ref{L:CL2.5} implies $u_{*1} (s_{m-1}') \ne 0$ if $m\ge 2$. This completes the proof of the lemma.  
\end{proof}

\begin{corollary} \label{C:CL3}
Assume in addition to the hypotheses of the previous lemma that $U_+''(s_j)$, $j=1, \ldots, m$, are all non-positive or all non-negative, and $U_+'' (s_{j_0})\ne 0$ for $j_0=m$ or $m-1$.  Then, for any $c_R$ near $c_*$, there exists a unique solution of $(u_*, \CW_*)$ of \eqref{E:CL14} such that 
\[
u_{*1} (0)=1, \; u_{*1, 2} (h_+) =0, \; \CW_*(h_+) =0,
\]
and $(u_*, \CW_*)(0)$ is $C^1$ in its dependence on $c_R$.  Furthermore, $u_{*1}(x_2) \ne 0$ at $x_2 =0, s_j', \ldots, s_m'$ and   
\[
\CW_* (0) = - \sgn{(c_I)} \pi \sum_{j=1}^m \frac {U_+'' (s_j') u_{*1} (s_j')}{|U_+'(s_j')|} \ne 0.
\]
\end{corollary}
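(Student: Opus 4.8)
The plan is to take the solution $(u_*, \CW_*)$ furnished by Lemma \ref{L:CL4} (the one with $u_*(h_+) = (0,0,1)^T$ and $\CW_*(h_+) = 0$), normalize it so that $u_{*1}(0) = 1$, and then read off $\CW_*(0)$ by summing the matching relations in \eqref{E:CL15}. Since both the system \eqref{E:CL14} and the jump conditions \eqref{E:CL15} are linear in $(u_{*1}, u_{*2}, u_{*3}, \CW_*)$ and invariant under multiplication by a positive scalar, the corollary reduces to showing that the solution from Lemma \ref{L:CL4} has $u_{*1}(0) > 0$; one then divides through by $u_{*1}(0)$. Existence and uniqueness of a solution of \eqref{E:CL14}--\eqref{E:CL15} from its data at $h_+$ is the content of Corollary \ref{C:CL1}, and among such solutions the only constant compatible with $u_{*1}(h_+) = u_{*2}(h_+) = \CW_*(h_+) = 0$ is $u_{*3}(h_+)$; by linearity together with $u_{*1}(0) > 0$, this constant is pinned down by the requirement $u_{*1}(0) = 1$, giving uniqueness.

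The first concrete step is to compute $\CW_*(0)$. Because $\CW_*$ is constant on each of the intervals $(0, s_1'), (s_1', s_2'), \ldots, (s_m', h_+)$ and $\CW_*(h_+) = 0$, iterating the jump formula in \eqref{E:CL15} from $h_+$ down to $0$ gives
\[
\CW_*(0) = -\sgn{(c_I)}\, \pi \sum_{j=1}^m \frac{U_+''(s_j')\, u_{*1}(s_j')}{|U_+'(s_j')|}.
\]
The main obstacle is showing this sum is nonzero. For this I would combine: (i) $u_{*1,3} \ge 0$ on $[0,h_+]$, $u_{*1}(s_m') > 0$, and $u_{*1}(s_{m-1}') > 0$ when $m \ge 2$, all from Lemma \ref{L:CL4}; (ii) the fact that for $c_R$ near $c_*$ the altitudes $s_j'$ lie within $O(|c_R - c_*|)$ of $s_j$, so $U_+''(s_j')$ carries the common sign of the $U_+''(s_j)$ whenever $U_+''(s_j) \ne 0$, while $|U_+''(s_j')| = O(|c_R - c_*|)$ whenever $U_+''(s_j) = 0$ (using $U_+ \in C^4$); and (iii) a uniform bound $|u_*| \le C$ on $[0,h_+]$ (part (2) of Corollary \ref{C:CL1} across each critical layer, with Gronwall on the regular pieces) together with a lower bound on $u_{*1}(s_{j_0}')$ by a positive constant for $c_R$ close to $c_*$, which follows from continuous dependence on $c_R$ and the positivity of $u_{*1}(s_{j_0})$ at $c_R = c_*$. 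Then the distinguished $j_0$-th summand dominates the remaining summands, which are either sign-definite or $O(|c_R - c_*|)$, so the sum stays bounded away from $0$ once $c_R$ is sufficiently close to $c_*$; hence $\CW_*(0) \ne 0$.

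Next I would use the conservation law $u_{*1}u_{*3} - u_{*2}^2 - \CW_*^2 \equiv 0$ on $[0,h_+]$ (part (1) of Lemma \ref{L:CL2.5}, with the constant equal to $0$ by the data at $h_+$; cf. \eqref{E:CL18}). On $(0, s_1')$ we have $\CW_* \equiv \CW_*(0) \ne 0$, so $u_{*1}u_{*3} = u_{*2}^2 + \CW_*(0)^2 > 0$; since $u_{*1} \ge 0$ and is continuous up to $x_2 = 0$ and $x_2 = s_1'$, this forces $u_{*1} > 0$ on $[0, s_1']$, and in particular $u_{*1}(0) > 0$. Running the same argument on the successive intervals $(s_j', s_{j+1}')$, and using that the partial sums $\sum_{j=l}^m U_+''(s_j')u_{*1}(s_j')/|U_+'(s_j')|$ with $l \le j_0$ are bounded away from $0$ for the same reason as before, one gets $\CW_* \ne 0$ on each of $(0,s_1'), \ldots, (s_{j_0-1}', s_{j_0}')$ and hence $u_{*1} > 0$ on $[0, s_{j_0}']$; at any remaining critical layer $s_j'$, part (2) of Lemma \ref{L:CL2.5} excludes $u_{*1}(s_j') = 0$, since such a vanishing would force $\CW_*(s_j'\pm) = 0$, contradicting the nonvanishing of $\CW_*$ on an adjacent interval. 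Combined with $u_{*1}(s_m') > 0$ from Lemma \ref{L:CL4}, this yields that $u_{*1}(0), u_{*1}(s_1'), \ldots, u_{*1}(s_m')$ are all positive.

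Finally, with $u_{*1}(0) > 0$ in hand, I would take the sought solution to be $u_{*1}(0)^{-1}(u_*, \CW_*)$: it solves \eqref{E:CL14}, still has $u_{*1}(h_+) = u_{*2}(h_+) = 0$ and $\CW_*(h_+) = 0$ (positive scaling preserves these vanishings), and now $u_{*1}(0) = 1$; the displayed formula for $\CW_*(0)$ scales by the same factor and remains nonzero. Since Lemma \ref{L:CL4} gives that $(u_*(0), \CW_*(0))$, and in particular its component $u_{*1}(0)$, is $C^1$ in $c_R$, with $u_{*1}(0)$ nonvanishing, the normalized pair $(u_*(0), \CW_*(0))$ is $C^1$ in $c_R$, which completes the proof.
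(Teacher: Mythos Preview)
Your proposal is correct and follows essentially the same route as the paper: take the solution from Lemma \ref{L:CL4}, use the sign hypothesis on $U_+''(s_j)$ together with $u_{*1}(s_{j_0}')>0$ to force $\CW_*\ne 0$ below $s_{j_0}'$, invoke the conservation law \eqref{E:CL18} to get $u_{*1}(0)>0$, and normalize. The paper's write-up is two sentences; yours spells out more details, in particular the perturbative issue that the hypothesis is stated at $s_j$ while the jumps involve $U_+''(s_j')$ (you control the possibly sign-indefinite terms with $U_+''(s_j)=0$ by their smallness $O(|c_R-c_*|)$, whereas the paper leaves this to continuity). Your appeal to Lemma \ref{L:CL2.5}(2) for the ``remaining'' critical layers is slightly redundant, since $j_0\in\{m-1,m\}$ and $u_{*1}(s_{m-1}'),u_{*1}(s_m')>0$ are already supplied by Lemma \ref{L:CL4}, but this does no harm.
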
 

The additional assumption and Lemma \ref{L:CL4} imply $\CW_* \ne 0$ and $\CW_*$ does not change sign on $x_2 < s_{j_0}'$. Consequently, \eqref{E:CL18} and Lemma \ref{L:CL4} imply $u_{*1} (x_2) > 0$ for $x_2 < s_{j_0}'$. The corollary  follows from normalizing the solution furnished by Lemma \ref{L:CL4}.

Finally, combining Proposition \ref{P:CL2} and Lemma \ref{L:CL4} gives the following statement about the solution of Rayleigh's equation \eqref{E:yeq1} with a near singular coefficient.

\begin{proposition} \label{P:CL3}
Under the assumptions in Corollary \ref{L:CL3}, for $c = c_R + i c_I$ sufficiently close to $c_*$ with $c_I \ne 0$, there exists a unique solution $y$ to \eqref{E:yeq1} and \eqref{E:yeq2}, which corresponds to a unique solution $(u, \CW)$  of \eqref{E:CL2} satisfying 
\be \label{E:CL18.6}
u_1(0)=1, \quad u_{1,2}(h_+) = \CW(h_+) =0, \quad u_3(h_+)>0. 
\ee
Let $(u_*, \CW_*)$ be the solution given in Corollary \ref{C:CL3} for $c_R$ and $\sgn(c_I)$. For any $\alpha \in (0, 1)$, there exists $C>0$ such that
\[
|u-u_*| + |\CW - \CW_*| \le C |c_I|^\alpha. 
\]
Moreover 
\[
y'(0) = u_2(0) + i \CW(0). 
\]
\end{proposition}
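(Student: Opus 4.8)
The plan is to deduce Proposition~\ref{P:CL3} essentially by repackaging Proposition~\ref{P:CL2} together with the positivity coming from Lemma~\ref{L:CL4} and Corollary~\ref{C:CL3}; the only substantive point is ruling out vanishing of the Rayleigh solution at $x_2=0$, and the rest is a bookkeeping change of normalization. First, since $c\notin\mathbb{R}$, the coefficient $U_+''/(U_+-c)+k^2$ in \eqref{E:yeq1} is a bounded continuous function on $[0,h_+]$, so \eqref{E:yeq1} is a regular linear ODE; standard theory gives a nontrivial solution $\hat y$ of \eqref{E:yeq1} with $\hat y'(h_+)=1$, and every solution of \eqref{E:yeq1} is a scalar multiple of $\hat y$. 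The plan is then to show $\hat y(0)\ne 0$ for $|c-c_*|$ and $|c_I|$ small, so that $y:=\hat y/\hat y(0)$ is well defined; this $y$ is the unique solution of \eqref{E:yeq1}--\eqref{E:yeq2}.

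To establish $\hat y(0)\ne 0$ I would pass to the quadratic variables. Let $(\hat u,\hat\CW)$ be built from $\hat y$ via \eqref{E:CL0.5}, so $(\hat u,\hat\CW)$ solves \eqref{E:CL2} with $\hat u(h_+)=(0,0,1)^T$ and $\hat\CW(h_+)=0$, and let $(u^0_*,\CW^0_*)$ be the solution of \eqref{E:CL14}--\eqref{E:CL15} with the same data at $h_+$, i.e.\ the one produced in Lemma~\ref{L:CL4}. Proposition~\ref{P:CL2} gives $|\hat u_1(0)-u^0_{*1}(0)|\le C|c_I|^\alpha$, while Lemma~\ref{L:CL4} together with the remark following Corollary~\ref{C:CL3} gives $u^0_{*1}(0)>0$ (and, by the $C^1$-dependence on $c_R$ recorded in Lemma~\ref{L:CL4}, bounded below by a positive constant uniformly for $c_R$ near $c_*$). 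Shrinking $\varepsilon_0$, this yields $\hat u_1(0)=|\hat y(0)|^2\ge\frac{1}{2}u^0_{*1}(0)>0$, hence $\hat y(0)\ne0$.

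With $y=\hat y/\hat y(0)$ fixed, define $(u,\CW)$ from $y$ through \eqref{E:CL0.5}. Because every entry of \eqref{E:CL0.5} is quadratic in $y$, one has $(u,\CW)=\hat u_1(0)^{-1}(\hat u,\hat\CW)$; in particular $(u,\CW)$ again solves \eqref{E:CL2}, with $u_1(0)=|y(0)|^2=1$, $u_1(h_+)=|y(h_+)|^2=0$ (whence $u_2(h_+)=\CW(h_+)=0$) and $u_3(h_+)=|y'(h_+)|^2=|\hat y(0)|^{-2}>0$, i.e.\ \eqref{E:CL18.6}. Uniqueness of $(u,\CW)$ holds because \eqref{E:CL0.5} recovers the Rayleigh solution up to a unimodular constant, which $y(0)=1$ pins down; likewise the limiting object of Corollary~\ref{C:CL3} is $(u_*,\CW_*)=u^0_{*1}(0)^{-1}(u^0_*,\CW^0_*)$. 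The error bound then follows by comparing the two normalizations: from
\[
u-u_*=\frac{\hat u-u^0_*}{\hat u_1(0)}+u^0_*\Big(\frac{1}{\hat u_1(0)}-\frac{1}{u^0_{*1}(0)}\Big),
\]
the first term is $O(|c_I|^\alpha)$ by \eqref{E:CL16} and \eqref{E:CL17} of Proposition~\ref{P:CL2} and the lower bound on $\hat u_1(0)$, and the second is $O(|c_I|^\alpha)$ because $|\hat u_1(0)-u^0_{*1}(0)|\le C|c_I|^\alpha$ and $u^0_*$ is bounded on $[0,h_+]\setminus\bigcup_j(s_j-\delta,s_j+\delta)$; the same identity handles $\CW-\CW_*$. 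This gives $|u-u_*|+|\CW-\CW_*|\le C|c_I|^\alpha$ off the $\delta$-neighborhoods of the critical layers, together with $|u_1-u_{*1}|\le C|c_I|^\alpha$ on all of $[0,h_+]$. Finally, since $y(0)=1$ is real, \eqref{E:CL0.5} gives $u_2(0)=\realpart{y'(0)}$ and $\CW(0)=\imagpart{y'(0)}$, so $y'(0)=u_2(0)+i\CW(0)$.

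The hard part is the non-vanishing $\hat y(0)\ne 0$; everything else is routine. That step is not self-contained, but rests on the strict positivity $u^0_{*1}(0)>0$, which is the content of Lemma~\ref{L:CL4} (deduced there from the monotonicity of $H=u_{*2}-\frac{U_+'}{U_+-c_R}u_{*1}$ and the conservation law \eqref{E:CL18}), and which relies on the sign hypothesis on $U_+''(s_j)$ imposed in Corollary~\ref{C:CL3}. Absent that positivity the normalization $y(0)=1$ could be unattainable and the proposition would fail.
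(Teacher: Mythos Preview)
Your argument is correct and rests on the same ingredients as the paper's proof---Proposition~\ref{P:CL2} for the convergence estimate and the strict positivity $u_{*1}(0)>0$ coming from Lemma~\ref{L:CL4} and Corollary~\ref{C:CL3}---but the organization differs. You work in the direction $y\to(u,\CW)$: start with the Rayleigh solution $\hat y$ (regular ODE since $c_I\ne 0$), show $\hat y(0)\ne 0$ via the quadratic variables, and then normalize, handling the error estimate by the clean decomposition $u-u_*=\hat u_1(0)^{-1}(\hat u-u^0_*)+u^0_*\bigl(\hat u_1(0)^{-1}-u^0_{*1}(0)^{-1}\bigr)$. The paper instead emphasizes the reverse direction $(u,\CW)\to y$, reconstructing $y=\sqrt{u_1}\,e^{i\theta}$ with $\theta'=\CW/u_1$ and verifying by direct computation that this solves \eqref{E:yeq1}; this makes the bijection between Rayleigh solutions and solutions of \eqref{E:CL2} explicit, at the cost of checking that $\theta$ is well defined through any zeros of $u_1$. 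Your route is shorter and more in the spirit of standard ODE normalization. One small quibble: your uniqueness claim for $(u,\CW)$ is phrased via ``\eqref{E:CL0.5} recovers the Rayleigh solution up to a unimodular constant,'' which tacitly invokes the polar reconstruction you otherwise avoid; it is cleaner to note directly that \eqref{E:CL2} is linear, so the solutions with $u_{1,2}(h_+)=\CW(h_+)=0$ form a ray indexed by $u_3(h_+)>0$, and $u_1(0)=1$ selects exactly one since you have shown $u_1(0)>0$ along that ray.
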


\begin{proof} 
According to \eqref{E:CL0.5}, any solution $y$ of \eqref{E:yeq1} clearly gives rise to a solution $(u, \CW)$ of \eqref{E:CL2} with the above specified boundary conditions. Such a solution $(u, \CW)$ is unique, much as we saw in Corollary \ref{C:CL3}, in light of Proposition \ref{P:CL2}. 

Conversely, we can reconstruct $y$ from such $(u, \CW)$. In fact, the conservation law \eqref{E:CL1} implies $u_1$ and $u_3$ can not vanish simultaneously, unless the solution is trivial. This and \eqref{E:CL1} further imply that $u_{1,3}$ do not change sign on $[0, h_+]$ and thus both remain nonnegative. 
Let  $\theta = \theta (x_2)$ be defined as the solution of 
\[
\theta' = \frac \CW{u_1}, \quad \theta (0) =0.
\]
This function is well-defined as, if $u_1(x_2)=0$ at some $x_2\in [0, h_+]$, then \eqref{E:CL2} and conservation law \eqref{E:CL1} imply 
\[ u_1'(x_2) = 2 u_2 (x_2)=0 =\CW(x_2) = \CW'(x_2), \qquad  \textrm{and} \qquad  u_1''(x_2 ) = u_3(x_2) > 0.\]

Let $y := \sqrt {u_1} e^{i\theta}.$ One may compute using the definition of $\theta$, \eqref{E:CL1}, and \eqref{E:CL2}, that 
\[
y' = \frac 1{\sqrt{u_1}} (u_2 + i \CW) e^{i\theta},
\]
while 
\[\begin{split}
y'' &= \Big( - u_2 u_1^{-\frac 32} (u_2 + i \CW) + \frac 1{\sqrt{u_1}} \Big(  \big(k^2 + \frac {U_+''  \big(U_+ - c_R\big)}{\big(U_+ - c_R\big)^2 + c_I^2} \big)u_1    + u_3 + \frac {ic_I U_+''}{\big(U_+ - c_R\big)^2 +  c_I^2} u_1\Big) \\
&\qquad +i \CW u_1^{-\frac 32} (u_2 + i \CW) \Big) e^{i\theta}\\
&= \Big( - u_1^{-\frac 32} (u_2^2 + \CW^2) + \frac 1{\sqrt{u_1}} \big(  k^2 u_1 + u_3 + \frac {U_+''}{U_+ - c} \big)u_1 \big)  \Big) e^{i\theta} = \big(k^2 + \frac {U_+''}{U_+ - c} \big) y.
\end{split}\]
Therefore $y$ solves \eqref{E:yeq1}. The estimates on $y$ are from Proposition \ref{P:CL2}.
\end{proof}

\noindent {\bf Proof of Proposition \ref{P:Rayleigh}.} 
The correspondence between the solution $y$ of \eqref{E:yeq1} and $(u, \CW)$ was established in the proof of Proposition \ref{P:CL3},. The properties of $(u, \CW)$ and the convergence estimates were already obtained in the previous lemmas and propositions. To complete the proof, one only needs to confirm that the jump conditions on $(y_*, y_*')$ are satisfied at each $s$ where $U_+(s) = c_*$. In fact, the $\log^2$ growth bound on $|u_{*3}|$ implies $y_*'$ has at most a logarithmic singularity, which shows that $y_*$ is H\"older continuous. This allows us to infer that in the limit $x_2 \to 0+$, 
\begin{align*}
(y_*'\bar y)(s+x_2) - (y_*'\bar y)(s-x_2) 
&=  \big( y_*' (s+ x_2) - y_*'(s-x_2) \big) \bar y_* (s-x_2) \\
& \qquad - y_*'(s+ x_2) \big( \bar y_* (s+ x_2) -\bar y_*(s-x_2) \big) \\
& \to \bar y_* (s) \lim_{x_2 \to 0+} \big( y_*' (s+ x_2) - y_*'(s-x_2) \big).
\end{align*}
As $u_{*2}$ and $\CW_*$ are the real and imaginary parts of $y_*' \bar y_*$, condition \eqref{E:matching} follows from the above calculation and \eqref{E:CL15}. 
\hfill $\square$\\

Finally we are in the position to prove the main theorem. \\

\noindent {\bf Proof of Theorem \ref{T:Instability}.} 
Let $(u_\#, \CW_\#)$ be the solution of the limiting system \eqref{E:CL14} with the parameter $c_R = c_k$ and put
\be
c_\# = - \pi \frac {\big(U_+(0) -c_k\big)^2}{2c_k |k| \tanh{(|k|h_-)}}\sum_{j=1}^m \frac {U_+''(s_j) u_{\#1}(s_j)}{|U_+'(s_j)|}.
\label{def cpound} \ee
Note that $c_\# > 0$ due to Lemma \ref{L:CL4}. Define a mapping $G = (G_1, G_2)(\tilde{c}_1, \tilde{c}_2, \epsilon)$ by 
\begin{align*}
G_1(\tilde c_1, \tilde c_2, \ep) &:=  c_k + \tilde c_1 - f_R (\ep \realpart{y'(0)},\ \ep \imagpart{y'(0)}, \ep) \\
G_2(\tilde c_1, \tilde c_2, \ep) &:=   c_\# +\tilde c_2 - \imagpart{y'(0)} f_I   (\ep \realpart{y'(0)}, \ep \imagpart{y'(0)}, \ep)  
\end{align*}
where $f_{R, I}$ are given in \eqref{E:ceq1} and $y$ is the solution of \eqref{E:yeq1} and \eqref{E:yeq2} with the parameter 
\be \label{c expansion}
c = c_k +\tilde c_1 + i \ep (c_\# + \tilde c_2).
\ee
The existence and uniqueness of $y$ is ensured by Proposition \ref{P:CL3} for small $\tilde c_1$ and $\ep$. Clearly $G$ is smooth for $\ep (c_\#+\tilde c_2)> 0$. In addition, the zero-set of $G$ corresponds to the solutions of \eqref{E:ceq1}, and thus solutions to \eqref{E:yeq1}--\eqref{E:yeqBC}. Proposition \ref{P:CL2} and Corollary \ref{L:CL3} imply
\[ y'(0) = u_2 (0) + i \CW (0) =u_{*2} (0) + i \CW_*(0) + O(\ep^\alpha),\]
where $\alpha$ can be taken arbitrarily in $(0,1)$, $\{s_1', \ldots, s_m'\} := U_+^{-1} (\{c_k + \tilde c_1\})$, and $(u_*, \CW_*)$ is the solution of \eqref{E:CL14} and \eqref{E:CL15} with the parameter $c_k+ \tilde c_1$. From the smoothness of $(u_*(0), \CW_*(0))$ in $\tilde c_1$ due to Corollary \ref{C:CL3}, we have 
\[\begin{split}
y'(0)= &u_{\#2} (0) + i \CW_\#(0) + O(|\tilde c_1|+ \ep^\alpha) 
=  u_{\#2} (0) -  i \pi \sum_{j=1}^m \frac {U_+''(s_j) u_{\#1} (s_j)}{|U_+'(s_j)|} + O(\ep^\alpha + |\tilde c_1|). 
\end{split}\]     

On the other hand, \eqref{E:ceq1} and \eqref{E:ceq2} give us that 
\[
G(\tilde c_1, \tilde c_2, \ep) = \big(\tilde c_1 + O(\ep), \,  \tilde c_2 + O(\ep^\alpha + |\tilde c_1|)\big).  
\] 
Take $(\tilde c_1, \tilde c_2)$ in a rectangle $[-\delta, \delta] \times [-M \delta, M \delta]$, where the fixed constants $\delta$ is small and $M$ is large. By considering the the image of the boundary of this rectangle under the mapping $G (\cdot, \cdot, \ep)$, a standard degree theory argument implies that  $G$ has a zero point near $0$ for any fixed small $\ep>0$. Unravelling definitions, this corresponds to an eigenvalue near $c_k$ with positive imaginary part. The proof of the theorem is  complete. 
\hfill $\square$

\begin{remark} 
1.) To ensure the existence of an instability, the above argument makes clear that one needs only that
\be \label{E:CLA3}
- c_k \sum_{j=1}^m \frac {U_+''(s_j) u_{\#1}(s_j)}{|U_+'(s_j)|} >0,
\ee
and thus those given in Theorem \ref{T:Instability} are sufficient, but not necessary.. \\
2.) If both the positive and negative values of $c_k$ belong to the range of $U_+$ and satisfy the assumption in Theorem \ref{T:Instability}, the above proof implies that there exist at least two distinct unstable modes.
\end{remark}

\section{Instability by other means} \label{pathological example section}

We can summarize the central conclusion of the previous two sections as follows. Assume $U_+\in C^4$ and fix a wave speed $k$. Let $c_k$ denote the corresponding wave speed for the capillary-gravity water wave beneath vacuum problem (i.e. $\ep=0$) given in \eqref{E:ck}. On the one hand, if a sequence of unstable wave speed $c_{k, \ep_n}$ of the water-air problem converges to $c_k$ as the density ratio $\ep_n \to 0+$, then Lemma \ref{L:necessary} implies that there must be a critical layer in the shear flow in the air, i.e., $c_k \in U_+ ([0, h_+])$. On the other hand, under the non-degenerate shear condition on $U_+'$ and some sign condition on $U_+''$, unstable wave speeds may bifurcate from $c_k$ at $\ep=0$. 

The situation for the Kelvin--Helmholtz instability is different. Inequality \eqref{classic KH} indicates that Kelvin--Helmholtz instability occurs if and only if the parameters $(k, \ep)$ are in a region $K \subset \{k,\ \ep >0\}$. With surface tension, there is a positive distance between $K$ and the set $\{\ep=0\}$, that is, for any wind speed $U_0>0$, instability does not occur if $\ep$ is too small. Without surface tension, on the one hand 
for any fixed $k>0$, we have $(k, \ep)\notin K$, i.e., the wave number $k$ is stable if $\ep>0$ is sufficiently small.  In this case, no unstable wave speed bifurcates from $c_k$ at $\ep=0$. On the other hand, the distance between $K$ and $\{\ep=0\}$ is zero, because any sequence $\{(k_n, \epsilon_n)\}  \subset K$ with $k_n \to \infty$ must satisfy $\epsilon_n \to 0$.  One implication of this is that, for any $\epsilon >0$, all sufficiently large modes $k$ are unstable.  In contrast to the water-vacuum setting, this instability ``bifurcates from infinity.''

Relaxing the assumption that $U_+\in C^4$ may lead to additional instability that does not fall into the critical layer theory. 
To illustrate this point, in this section we show that there exist background flows $U$ that are linearly unstable at a wave number $k$ for which $c_k$ is in the range of $U_+$, but the critical layer and support of $U_+^{\prime\prime}$ are separated by a distance uniform in small $\epsilon>0$.  In fact, these are  solutions of the Euler system for which $U_+^{\prime\prime}$ is a $\delta$-measure of negative mass, and the critical layer is at an inflection point. In the view of Remark \ref{R:deltamass}, it is justified to use \eqref{E:yeq1}--\eqref{E:yeqBC} to study the instability of such non-smooth shear flows. 

For simplicity, in these computations we take $\sigma = 0$ and $h_\pm = \infty$.  Consider background profiles of the form
\be U_+(x_2) := \left\{ \begin{array}{ll}  \mu x_2 & 0 \leq x_2 \leq x_2^* \\
 \mu x_2^*  & x_2 > x_2^*. \end{array} \right. \label{inflection:formU} \ee
Here $\mu$ and $x_2^*$ are parameters that we may choose freely.  Notice in particular that 
\[ U_+^{\prime\prime} = -\mu  \delta_{x_2^*}.\] 
Clearly the value 
\[ U_* := U_+(x_2^*) =  \mu x_2^*\] 
 will play an important role. 

\begin{proposition}  Fix a wave speed $k$. Let $\sigma = 0$, and take $h_\pm = \infty$.   There exists $U \in C^{0,1}(\mathbb{R})$ of the form \eqref{inflection:formU} that are unstable in the sense that, for all $0 < \epsilon \ll 1$, there exists $(y, c)$ solving \eqref{E:yeq1}--\eqref{E:yeqBC} with 
\[ 0 < \imagpart{c} = O(\sqrt{\epsilon}).\]
Moreover, for this profile,  $U_* - \sqrt{g/k}$ has a positive lower bound uniform in $\epsilon$, hence the critical layer occurs at an inflection point of $U$ away from $x_2^*$. 
\end{proposition}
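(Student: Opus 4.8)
The plan is to exploit the explicit solvability of Rayleigh's equation for the piecewise-linear profile \eqref{inflection:formU}, reducing the eigenvalue problem to an algebraic (transcendental) equation in $c$, and then to track its roots as $\epsilon \to 0$. First I would solve \eqref{E:yeq1} on the two subintervals $(0, x_2^*)$ and $(x_2^*, \infty)$ where $U_+'' \equiv 0$; on each the equation is simply $-y'' + k^2 y = 0$, so $y$ is a combination of $e^{\pm |k| x_2}$. Imposing $y \to 0$ as $x_2 \to \infty$ pins down the solution on $(x_2^*, \infty)$ up to scalar, and $y(0) = 1$ normalizes the other branch; continuity of $y$ at $x_2^*$ plus the jump in $y'$ dictated by the $\delta$-mass in Remark \ref{R:deltamass} (namely $\jump{y'}(x_2^*) = \frac{U_+''}{U_+ - c} y = \frac{-\mu}{U_* - c} y(x_2^*)$, interpreted distributionally) gives one matching relation. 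This yields $y'(0)$ as an \emph{explicit} rational function of $c$, with the factor $(U_* - c)^{-1}$ encoding the only singularity. Substituting into the dispersion relation \eqref{E:yeqBC} (with $\sigma = 0$, $h_- = \infty$, so $\tanh \to 1$) produces a single equation $\Phi(c, \epsilon) = 0$ that is a rational function of $c$; clearing the denominator $(U_* - c)$ makes it polynomial.

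Next I would analyze $\Phi(c, \epsilon) = 0$ perturbatively. At $\epsilon = 0$ the relevant root is $c = c_k = \sqrt{g/k}$ (taking $h_- = \infty$, $\sigma = 0$ in \eqref{E:ck}). I want to choose $\mu$ and $x_2^*$ so that: (a) $U_* = \mu x_2^* > c_k$ strictly, with a gap bounded below uniformly in $\epsilon$ — this guarantees the critical layer $U_+(x_2) = c_k$ occurs at some $x_2 \in (0, x_2^*)$, i.e., in the \emph{linear} part of the profile where $U_+'' = 0$, so $c_k$ is not at the mass of the $\delta$-function; (b) the sign of the $O(\epsilon)$ correction to the discriminant (or equivalently the sign of $\imagpart{c}$ produced by the perturbation) is positive. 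Because everything is explicit, I expect (a) and (b) to translate into simple inequalities relating $\mu$, $x_2^*$, $k$, $g$; one then just picks admissible values. The $O(\sqrt{\epsilon})$ size of $\imagpart{c}$ should emerge because, after clearing the denominator, the perturbed equation near $c_k$ looks like a quadratic in $(c - c_k)$ whose discriminant is $O(\epsilon)$ and negative, forcing complex roots with imaginary part $\sim \sqrt{\epsilon}$ — this is genuinely different from the critical-layer case of Theorem \ref{T:Instability}, where $\imagpart c = O(\epsilon)$ and $c - c_k = O(\epsilon)$, whereas here $c - c_k$ itself may be $O(1)$ or $O(\sqrt\epsilon)$.

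A cleaner route, which I would actually pursue, is to \emph{not} fix $c$ near $c_k$ at all but to treat the cleared-denominator equation $P(c, \epsilon) = 0$ directly: it is a polynomial of low degree in $c$ with coefficients analytic (indeed polynomial) in $\epsilon$, $\mu$, $x_2^*$, $k$, $g$. I would compute its discriminant at $\epsilon = 0$, verify that for a suitable open set of parameters $(\mu, x_2^*)$ this $\epsilon = 0$ polynomial has a double real root or a pair of real roots that collide, and then show that the $O(\epsilon)$ perturbation opens them into a complex conjugate pair. Selecting parameters inside this open set so that additionally $U_* > c_k + \delta_0$ for a fixed $\delta_0 > 0$ completes the construction; the claim about the inflection point is then immediate since $U_+'' = -\mu \delta_{x_2^*}$ vanishes identically on a neighborhood of the critical layer.

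The main obstacle I anticipate is purely bookkeeping: the dispersion relation \eqref{E:yeqBC} with the vortex-sheet term $\ep U_+'(0)(U_+(0) - c)$ and the $\ep c |k| U_+(0)(1 - \tanh^2)$ correction, combined with the rational $y'(0)$, produces a somewhat unwieldy polynomial $P(c, \epsilon)$, and one must be careful that the double-root/collision at $\epsilon = 0$ really occurs for parameter values compatible with the gap condition $U_* > c_k$ rather than only when $U_* = c_k$. Verifying the compatibility of these two requirements — instability mechanism present \emph{and} critical layer displaced from $x_2^*$ — is the crux; I expect it to come down to checking that a certain explicit quantity (essentially the leading coefficient governing whether the two real roots at $\epsilon = 0$ are distinct or coincident) can be made to vanish while $U_* - c_k$ stays positive, which should hold on a nonempty open set of $(\mu, x_2^*)$ but requires an honest computation to confirm.
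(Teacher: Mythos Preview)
Your proposal is correct and follows essentially the same route as the paper: solve Rayleigh explicitly on the two linear pieces, match across $x_2^*$ with the $\delta$-jump in $y'$, obtain $y'(0)$ as a rational function of $c$, clear the denominator in \eqref{E:yeqBC}, and arrange a double real root at $\epsilon=0$ that splits into a complex pair for $\epsilon>0$.

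One clarification on the point you flagged as the crux. The computation gives $y'(0) = -k\,(c-\alpha)/(c-\beta)$ with $\alpha,\beta$ explicit in $\mu,x_2^*,k$; after clearing $(c-\beta)$ the $\epsilon=0$ roots of the resulting cubic are $\beta$ and $\pm\sqrt{g/k}$. The double root is therefore obtained by imposing $\beta = \sqrt{g/k}$, which is a codimension-one condition in $(\mu,x_2^*)$, not an open one as you wrote. The compatibility you worried about is then automatic: one has $\beta = U_*\bigl[1 - (2kx_2^*)^{-1}(1-e^{-2kx_2^*})\bigr] < U_*$ for every $x_2^*>0$, so along the curve $\beta=c_k$ the gap $U_* - c_k$ is strictly positive, placing the critical layer in the linear region where $U_+''=0$. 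Checking $\partial_\epsilon f>0$ at the double root then gives $\imagpart c \sim \sqrt{\epsilon}$ exactly as you anticipated.
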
   

It is worth pointing out that the exponential growth rate $O(\sqrt{\ep})$ of the instability here is much greater than $O(\ep)$ predicted by the critical layer theory. 

\begin{proof}  Taking $U$ as in \eqref{inflection:formU}, it is possible to explicitly solve the Rayleigh equation \eqref{E:yeq1} for $y$.  In general, we find 
\be  y = \left\{ \begin{array}{ll} A_1 e^{-kx_2} & x_2 > x_2^* \\
A_2 e^{-kx_2} + A_3 e^{kx_2} & 0 \leq x_2 \leq x_2^* \end{array} \right. \label{inflection:Yhatgeneralformulat} \ee
where $A_1, A_2, A_3$ are given by the following linear system:
\[ \left\{ \begin{array}{ll}
A_2 + A_3 &= 1 \\
A_1 - A_2 - A_3 e^{2kx_2^*} &= 0 \\
(\dfrac{\mu}{U_* - c} -k) A_1 + k A_2 - k A_3 e^{2k x_2^*} &= 0. \end{array} \right. \]
The first of these is from the boundary condition at $x_2 = 0$, the second is to ensure continuity across $x_2^*$, and the third arises from the jump condition for $y^\prime$ across $x_2^*$.  Solving the system yields 
\begin{gather*}
A_1  = \frac{2k(U_* -c)}{2k(U_*-c) + \mu (e^{-2kx_2^*} - 1)} \\
A_2  = \frac{2k(U_*-c) -\mu }{2k(U_*-c) + \mu (e^{-2kx_2^*} -1)}, \qquad 
A_3  = \frac{\mu e^{-2kx_2^*}}{ 2k(U_*-c) +\mu (e^{-2k x_2^*} - 1)}.
\end{gather*}
Inserting these computations into \eqref{inflection:Yhatgeneralformulat}, we see that
\[ y^\prime(0) = k(A_3 - A_2) = -k \frac{c-\alpha}{c-\beta},\]
where
\be \alpha := U_* - \frac{\mu}{2k}(1+e^{-2kx_2^*}), \qquad \beta = U_* - \frac{\mu}{2k}(1-e^{-2kx_2^*}) .\label{def alpha beta} \ee
With this in hand, the dispersion relation \eqref{E:yeqBC} becomes
\[ g(1-\epsilon) = \epsilon k c^2 \frac{c-\alpha}{c-\beta} + c^2 k - \epsilon c \mu.\]

Note that, by \eqref{def alpha beta}, 
\[ \beta =  U_*\left[ 1 - \frac{1}{2k x_2^*} (1- e^{-2kx_2^*}) \right].\]
The quantity in square brackets on the right-hand side above has range $(0,1)$, with $1$ and $0$ being the limits as $x_2^*$ is taken to $+\infty$ and $0+$, respectively.  It is therefore easy to see that there exist many choices of the parameters $x_2^*$ and $\mu$ for which 
\[ \beta = \sqrt{\frac{g}{k}} < U_* \qquad \textrm{and} \qquad \mu > 0.\]
The dispersion relation can then be simplified into the following polynomial:
\be \label{inflection:cubicc} 
0 = f(c, \epsilon) := \left(c - \sqrt{\frac gk} \right) \left(c^2 - \epsilon \frac {\mu}k c - \frac gk (1-\epsilon) \right)+ \epsilon c^2 (c-\alpha). \ee
Notice that $c = \sqrt{g/k}$ is a double root of $f(\cdot, 0)$.  Indeed, 
\begin{gather*}
f(\sqrt{\frac gk}, 0) =  (\partial_c f) (\sqrt{\frac gk}, 0) = 0, \quad (\partial_c^2 f) (\sqrt{\frac gk}, 0) = 4 \sqrt{\frac gk}>0, \\ 
(\partial_\epsilon f) (\sqrt{\frac gk}, 0) = \frac gk (\sqrt{\frac gk} -\alpha) = \frac{g \mu}{k^2} e^{-2kx_2^*}  >0.
\end{gather*}
Since $f$ is a polynomial of $c$, these facts imply that, for $0 < \epsilon \ll 1$, there exist a complex conjugate pair $c_\pm$ of solutions to $f(c,\epsilon) = 0$ with 
\[
\realpart{c_\pm} = 
\sqrt{\frac gk} + O(\epsilon), \qquad 0< \pm \imagpart{c_\pm} = O(\sqrt{\epsilon}). \qedhere
\] 
\end{proof}

\bibliographystyle{siam}
\bibliography{windwaves}

\end{document}